\title{Subgraph densities in Markov spaces}
\author{D\'avid Kunszenti-Kov\'acs\footnote{Research supported by ERC
Consolidator Grant 648017}, L\'aszl\'o Lov\'asz\footnote{Research supported by
ERC Synergy Grant No.~810115.} ~and Bal\'azs Szegedy\footnote{Research was
partially supported by the NKFIH "\'Elvonal" KKP 133921 grant.}\\
HUN-REN Alfr\'ed R\'enyi Institute of Mathematics\\
Budapest, Hungary}
\long\def\ignore#1{}
\begin{document}
\newtheorem{theorem}{Theorem}[section]
\newtheorem{prop}[theorem]{Proposition}
\newtheorem{algorithm}[theorem]{Algorithm}
\newtheorem{lemma}[theorem]{Lemma}
\newtheorem{claim}{Claim}
\newtheorem{corollary}[theorem]{Corollary}
\newtheorem*{thm*}{Theorem \ref{THM:MAIN-KAB}}
\theorembodyfont{\rmfamily}
\newtheorem{remark}[theorem]{Remark}
\newtheorem{definition}[theorem]{Definition}
\newtheorem{example}[theorem]{Example}
\newtheorem{conj}{Conjecture}
\newtheorem{prob}{Problem}
\newtheorem{step}{Step}
\newtheorem{alg}{Algorithm}

\newenvironment{proof}{\medskip\noindent{\bf Proof. }}{\hfill$\square$\medskip}
\newenvironment{proof*}[1]{\medskip\noindent{\textbf{ Proof of #1}}}{\hfill$\square$\medskip}

\def\R{\mathbb{R}}
\def\one{\mathbbm1}
\def\T{^{\sf T}}
\def\Pr{{\sf P}}
\def\E{{\sf E}}
\def\Q{{\mathbf Q}}
\def\bd{\text{bd}}
\def\eps{\varepsilon}
\def\wh{\widehat}
\def\cork{\text{\rm corank}}
\def\rank{\text{\rm rank}}
\def\Ker{\text{\rm Ker}}
\def\rk{\text{\rm rank}}
\def\supp{\text{\rm supp}}
\def\diag{\text{\rm diag}}
\def\sep{\text{\rm sep}}
\def\tr{\text{\rm tr}}
\def\vol{\text{\rm vol}}
\def\gen{\text{\rm gen}}
\def\Det{\text{\rm Det}}
\def\lin{\text{\rm lin}}
\def\intl{\int\limits}
\def\disc{\text{\sf disc}}
\def\tv{{\rm tv}}
\def\deg{\text{\rm deg}}
\def\Hom{\text{\rm Hom}}

\def\AA{\mathcal{A}}\def\BB{\mathcal{B}}\def\CC{\mathcal{C}}
\def\DD{\mathcal{D}}\def\EE{\mathcal{E}}\def\FF{\mathcal{F}}
\def\GG{\mathcal{G}}\def\HH{\mathcal{H}}\def\II{\mathcal{I}}
\def\JJ{\mathcal{J}}\def\KK{\mathcal{K}}\def\LL{\mathcal{L}}
\def\MM{\mathcal{M}}\def\NN{\mathcal{N}}\def\OO{\mathcal{O}}
\def\PP{\mathcal{P}}\def\QQ{\mathcal{Q}}\def\RR{\mathcal{R}}
\def\SS{\mathcal{S}}\def\TT{\mathcal{T}}\def\UU{\mathcal{U}}
\def\VV{\mathcal{V}}\def\WW{\mathcal{W}}\def\XX{\mathcal{X}}
\def\YY{\mathcal{Y}}\def\ZZ{\mathcal{Z}}
\def\Rho{\rho}

\def\Ab{\mathbf{A}}\def\Bb{\mathbf{B}}\def\Cb{\mathbf{C}}
\def\Db{\mathbf{D}}\def\Eb{\mathbf{E}}\def\Fb{\mathbf{F}}
\def\Gb{\mathbf{G}}\def\Hb{\mathbf{H}}\def\Ib{\mathbf{I}}
\def\Jb{\mathbf{J}}\def\Kb{\mathbf{K}}\def\Lb{\mathbf{L}}
\def\Mb{\mathbf{M}}\def\Nb{\mathbf{N}}\def\Ob{\mathbf{O}}
\def\Pb{\mathbf{P}}\def\Qb{\mathbf{Q}}\def\Rb{\mathbf{R}}
\def\Sb{\mathbf{S}}\def\Tb{\mathbf{T}}\def\Ub{\mathbf{U}}
\def\Vb{\mathbf{V}}\def\Wb{\mathbf{W}}\def\Xb{\mathbf{X}}
\def\Yb{\mathbf{Y}}\def\Zb{\mathbf{Z}}

\def\ab{\mathbf{a}}\def\bb{\mathbf{b}}\def\cb{\mathbf{c}}
\def\db{\mathbf{d}}\def\eb{\mathbf{e}}\def\fb{\mathbf{f}}
\def\gb{\mathbf{g}}\def\hb{\mathbf{h}}\def\ib{\mathbf{i}}
\def\jb{\mathbf{j}}\def\kb{\mathbf{k}}\def\lb{\mathbf{l}}
\def\mb{\mathbf{m}}\def\nb{\mathbf{n}}\def\ob{\mathbf{o}}
\def\pb{\mathbf{p}}\def\qb{\mathbf{q}}\def\rb{\mathbf{r}}
\def\sb{\mathbf{s}}\def\tb{\mathbf{t}}\def\ub{\mathbf{u}}
\def\vb{\mathbf{v}}\def\wb{\mathbf{w}}\def\xb{\mathbf{x}}
\def\yb{\mathbf{y}}\def\zb{\mathbf{z}}

\def\Abb{\mathbb{A}}\def\Bbb{\mathbb{B}}\def\Cbb{\mathbb{C}}
\def\Dbb{\mathbb{D}}\def\Ebb{\mathbb{E}}\def\Fbb{\mathbb{F}}
\def\Gbb{\mathbb{G}}\def\Hbb{\mathbb{H}}\def\Ibb{\mathbb{I}}
\def\Jbb{\mathbb{J}}\def\Kbb{\mathbb{K}}\def\Lbb{\mathbb{L}}
\def\Mbb{\mathbb{M}}\def\Nbb{\mathbb{N}}\def\Obb{\mathbb{O}}
\def\Pbb{\mathbb{P}}\def\Qbb{\mathbb{Q}}\def\Rbb{\mathbb{R}}
\def\Sbb{\mathbb{S}}\def\Tbb{\mathbb{T}}\def\Ubb{\mathbb{U}}
\def\Vbb{\mathbb{V}}\def\Wbb{\mathbb{W}}\def\Xbb{\mathbb{X}}
\def\Ybb{\mathbb{Y}}\def\Zbb{\mathbb{Z}}

\def\Af{\mathfrak{A}}\def\Bf{\mathfrak{B}}\def\Cf{\mathfrak{C}}
\def\Df{\mathfrak{D}}\def\Ef{\mathfrak{E}}\def\Ff{\mathfrak{F}}
\def\Gf{\mathfrak{G}}\def\Hf{\mathfrak{H}}\def\If{\mathfrak{I}}
\def\Jf{\mathfrak{J}}\def\Kf{\mathfrak{K}}\def\Lf{\mathfrak{L}}
\def\Mf{\mathfrak{M}}\def\Nf{\mathfrak{N}}\def\Of{\mathfrak{O}}
\def\Pf{\mathfrak{P}}\def\Qf{\mathfrak{Q}}\def\Rf{\mathfrak{R}}
\def\Sf{\mathfrak{S}}\def\Tf{\mathfrak{T}}\def\Uf{\mathfrak{U}}
\def\Vf{\mathfrak{V}}\def\Wf{\mathfrak{W}}\def\Xf{\mathfrak{X}}
\def\Yf{\mathfrak{Y}}\def\Zf{\mathfrak{Z}}

\def\lljav#1{\textcolor{black}{#1}}
\def\bjav#1{\textcolor{black}{#1}}
\def\daku#1{{#1}}

\maketitle

\tableofcontents

\begin{abstract}
We generalize subgraph densities, arising in dense graph limit theory, to
Markov spaces (symmetric measures on the square of a standard Borel space).
More generally, we define an analogue of the set of homomorphisms in the form
of a measure on maps of a finite graph into a Markov space. The existence of
such homomorphism measures is not always guaranteed, but can be established
under rather natural smoothness conditions on the Markov space and sparseness
conditions on the graph. This continues a direction in graph limit theory in
which such measures are viewed as limits of graph sequences.

\end{abstract}

\section{Introduction}

Dense graph limit theory is arguably the most complete graph limit theory:
There is a rather satisfactory duality between the local and global points of
view; subgraph densities and large scale structures (such as Szemer\'edi
partitions) are connected via the counting lemma and the inverse counting
lemma; limit objects, called graphons, are well known. Furthermore, the problem
of soficity does not arise: every potential limit object is the limit of finite
graphs.

Substantial work has been done pushing these things into the sparse regime.
Graphons are bounded functions on $[0,1]^2$, and a natural next step is to
explore the regime of ``unbounded graphons''. Borgs, Chayes, Cohn and Zhao
\cite{BCCZ} extended various results in dense graph limit theory to
``$L^p$-graphons'' (symmetric functions in $L^p([0,1]^2)$); here only
degree-restricted simple graphs were guaranteed to have finite densities. The
authors \cite{KKLSz} introduced a very general framework that was among other
things meant to encode homomorphism convergence of multigraphs, and allows for
finite densities for all decorated graphs in all limit objects. In the simplest
case, this corresponds to symmetric functions in
$L^\omega=\bigcap_{p\in[1,\infty)}L^p([0,1]^2)$, which is the largest function
space in which all elements have finite densities for all simple graphs. Other
work in this direction includes \cite{BCCH,BCCL,VRoy,Frenk}.

To go beyond unbounded graphons, the authors \cite{KLSz1} developed a limit
theory for not necessarily dense graphs, in which limit objects are symmetric
measures on $[0,1]^2$ called ``s-graphons''. (The $[0,1]$ interval can be
replaced by any standard Borel space.) Backhausz and Szegedy \cite{BackSz}
developed a stronger convergence theory with similar
limit objects, which they call ``graphops''. While these approaches have the
potential to unify various branches of graph limit theory, both of them are
based on convergence notions which could be called ``global convergence'' or
``right convergence''. The local point of view seems to be lost: subgraph
densities and subgraph distributions in general have not been defined in
symmetric measures on $[0,1]^2$.

Graphons or more generally unbounded graphons correspond to measures on
$[0,1]^2$ that are absolutely continuous with respect to the uniform measure.
{\it The main purpose of this paper is to study local aspects of graph limit
theory for singular measures.} Our results show that this is possible as long
as the measure has certain smoothness properties, whilst the graph to be mapped
has certain sparseness properties. The smoother the measure, the more finite
graphs will have well-defined densities in them. This leads to a remarkable
hierarchical viewpoint on graph limit theory, where smoothness of limit objects
corresponds to certain sparsity properties of graph sequences. At the top of
this hierarchy are the bounded and the $L^\omega$-graphons of \cite{KKLSz} as
the smoothest objects. $L^p$-graphons from \cite{BCCZ} form the next level of
smoothness.

The hope that one may extend local properties to singular measures has already
emerged in a previous work by the authors of the present paper. In \cite{KLSz2}
we investigated random orthogonal representations of finite graphs by vectors
on $n$-dimensional unit sphere $S^n$. As it turns out, such representations can
also be viewed as random homomorphisms into a singular measure defined on
$S^n\times S^n$: namely, the uniform distribution $\eta_n$ on orthogonal pairs
of vectors in $S^n$. Quite surprisingly, for every finite graph $H$ and
sufficiently large natural number $n$ one can introduce a robust notion for the
density of $H$ in $\eta_n$. Moreover one can introduce a measure on copies of
$H$ in $\eta_n$; if the total measure is finite, then one can normalize it to a
probability measure. See Section \ref{SSEC:ORTH} and also \cite{KLSz2} as
source of concrete, illustrative examples supporting the more general and
abstract content of the present paper.

To keep our treatment relatively simple, in this paper we address a special
case of s-graphons, which we call Markov spaces and (in their bipartite
version) bi-Markov spaces. A Markov space is a standard Borel sigma-algebra $(J,\BB)$
endowed with a probability measure  $\eta$ on $\BB\times\BB$. We restrict our
attention to symmetric measures on $\BB\times\BB$. We will denote the marginal
distribution of $\eta$ on $J$ by $\pi$. For a (finite) graph $G$, the uniform
distribution on $E(G)$ defines a Markov space. Markov spaces are essentially
equivalent to reversible Markov chains with a specified stationary
distribution. Graphops and s-graphons can be obtained by adding a probability
measure on the points, generalizing the uniform distribution on the nodes of a
graph. See also Remark \ref{REM:NODE-MEASURE}.

\medskip

We address the following three questions:

\medskip

(i) {\it How to define a reasonable notion of the density of a graph $G=(V,E)$
in a Markov space $(J,\BB,\eta)$?}

\medskip

Subgraph densities play a crucial role in graph limit theory, in the definition
of local convergence, extremal graph theory and graph property testing, just to
name a few applications; they also arise as Feynman integrals in quantum
physics (see e.g.~\cite{GliJaf}, Section 8.2). Subgraph densities can be viewed
as analogues of the moments of functions defined on product
spaces (cf.~\cite{LSz2010} and \cite{HomBook}, Appendix A4). We can calculate
densities of finite graphs in analytic objects representing graph limits such
as graphons and graphings. More general Markov spaces are also known to
represent limits of finite graphs, but the right notion for subgraph density is
still missing. Examples can be given showing that subgraph densities satisfying
reasonable conditions cannot be defined in full generality.

\medskip

(ii) {\it How to define the homomorphism set $\Hom(G,\eta)$ where $G$ is a
finite graph and $(J,\BB,\eta)$ is a Markov space?}

\medskip

If $H$ is a simple finite graph then $\Hom(G,H)$ is a subset of the set
$V(H)^V$ of all maps from $V=V(G)$ to $V(H)$. However, if we consider an
edge-weighted graph $H$, then there is no general, natural way to interpret
$\Hom(G,H)$ as a subset of $V(H)^V$. Rather, the edge weights induce a function
on $V(H)^V$, the function value being the product of the edge weights of the
images of the edges of $G$ under the corresponding vertex map. More generally,
if $(J,\BB,\pi)$ is a probability space and $W:~J^2\to [0,1]$ is a graphon,
then our interpretation of $\Hom(G,W)$ is a measure $\eta^G$ on $J^V$ whose
density function (Radon--Nikodym derivative) with respect to $\pi^V$ is the
function
\begin{equation}\label{wgformula}
W^G(x_1,x_2,\dots,x_n):=\prod_{(i,j)\in E(G)}W(x_i,x_j),
\end{equation}
where $V=\{1,2,\dots,n\}$. With this definition, the total measure
$\eta^G(J^V)$ is the familiar homomorphism density $t(G,W)$. If we apply this
definition to a graphon that represents a finite graph $H$ by its adjacency
function $V(H)\times V(H)\to\{0,1\}$, then we obtain the counting measure on
$\Hom(G,H)$ normalized by the number $|V(H)|^{|V(G)|}$ of all maps from $V(G)$
to $V(H)$.

Our goal is to introduce similar measures representing $\Hom(G,\eta)$ for
Markov spaces. The fact that generalized homomorphism sets are represented by
measures and not by sets is perfectly in line with the fact that the "edge set"
of a Markov space is not a set either: It is represented by the measure $\eta$
which tells us how to choose a random edge. Unfortunately, the product formula
in \eqref{wgformula} does not make sense if $\eta$ is singular with respect to
$\pi^2$, and so we have to use different methods to define $\eta^G$.

Our main approach relies on axiomatizing the properties of homomorphism
measures. We introduce some relatively simple and natural properties (related
to, but different from, the notion of a Markov random field; see Appendix
\ref{APP:B}) that are strong enough to uniquely define the measures $\eta^G$.
This also allows us to define the subgraph density
\[
t(G,\eta)=\eta^G(J^V),
\]
answering (i) in this case. We warn that $t(G,\eta)$ can be infinite. However,
if the total measure $t(G,\eta)$ is finite, then we can turn the measure
$\eta^G$ into a probability measure by normalizing it. These normalized
versions can then be used to define random copies of $G$ in $\eta$.

\medskip

(iii) {\it Can Markov spaces be approximated by a sequence $G_1,G_2,\dots$ of
finite graphs, so that the density of every finite graph $F$ in $G_n$ (suitably
normalized) tends to the density of $F$ in the limit space?}

\medskip

A Markov space that can be approximated this way will be called {\it sofic}. In
the case of dense graphs, the limit objects (graphons) are sofic; this takes an
easy construction via sampling. In the case of bounded-degree graphs, soficity
of the limit objects (involution-invariant distributions or graphings) is the
famous Aldous--Lyons conjecture, which is stronger than the soficity problem
for finitely generated groups.

\bigskip

We offer two approaches (and their combination) to these problems.

\medskip

(a) The first approach builds on the fact that the generalization of (i) to
(ii) allows for a recursive definition of these ``Hom-measures''. The
``axioms'' for these measures enable us to build up the measure corresponding
to a graph $G$ recursively from the measures pertaining to smaller graphs, by
attaching their nodes one-by-one. The independence of the construction from the
order in which the graph is built up is the main difficulty of this approach,
and in fact it does not hold in general (see Example \ref{EXA:KAB-ORT} below).
We can prove this independence for triangle-free graphs (under smoothness
assumptions on the measure $\eta$).

\medskip

(b) In the second approach, we consider approximations of $\eta$ by sequences
of graphons. By considering the densities of subgraphs within each graphon of
the sequence, and taking their limit, one naturally obtains a notion of
subgraph densities (more generally, homomorphism measures) in $\eta$. However,
in order to obtain a robust, well-defined notion through this approach, we have
to make sure that subgraph densities in these approximating sequences have a
limit, and that this limit is independent of the sequence considered. This
independence also hinges on certain smoothness properties of $\eta$.

Soficity is clearly related to our second approach, the discretization of the
Markov space, which can be used to produce a sofic approximation.

The equivalence of these two approaches is a nontrivial problem that is also
addressed in this paper. As remarked above, our methods do not work in full
generality; very likely there is some theoretical limitation on how far one can
go with defining $\eta^G$ in arbitrary Markov spaces. However, the full
analysis of this problem is left as an important open question.

\medskip

In the next part of the introduction we will state our main definitions and
results more precisely. We start with our definition of discretized Markov
spaces. Let $\PP=\{J_1,J_2,\dots,J_n\}$ be a finite measurable partition of the
space $J$ such that every partition class has positive $\pi$-measure. If the Markov
space is given by the measure $\eta$ on $J\times J$, then it makes sense to
``project" $\eta$ to $\PP$. When restricted to a product set $J_i\times J_j$,
the new measure $\eta_\PP$ is a scaled version of $\pi^2$ such that
$\eta_\PP(J_i\times J_j)=\eta(J_i\times J_j)$ holds. In other words, the
Radon--Nikodym derivative $W$ of $\eta_\PP$ with respect to $\pi^2$ is a
graphon whose value on $J_i\times J_j$ is constant $\eta(J_i\times
J_j)\pi(J_i)^{-1}\pi(J_j)^{-1}$.

We call a sequence of partitions of $J$ a {\it generating partition sequence},
if the partition classes are Borel, have positive $\pi$-measure, each partition
is a refinement of the previous one, and the partition classes generate all
Borel sets. \daku{ If we are more interested in generating the measure algebra
rather than the Borel sets proper, i.e., we only require the partition classes
to generate a sigma-algebra whose $\pi$-completion contains all Borel sets, we
obtain the slightly more general class of {\it exhausting partition sequences}
(see Section \ref{SEC:PARTITIONS}). }

Let $(\PP_i)_{i=1}^\infty$ be an exhausting partition sequence. We can then try
to define $\eta^G$ as the limit of homomorphism measures of $G$ in the graphons
$\eta_{\PP_i}$. The existence of these limits and the independence from the
chosen partition sequence is nontrivial and not always true.

The definition of what we mean by the convergence $\eta_{\PP_i}^G\to\eta^G$
should also be clarified.
Let $(J^m,\BB^m)$ be the product
sigma-algebra of $m$ copies of $(J,\BB)$. A set of the form
$C=B_1\times\dots\times B_m$, where $B_i\in\BB$, will be called a {\it box}.
For measures $(\mu,\mu_1,\mu_2,\dots)$ on $\BB^m$, the relation $\mu_n\to\mu$
{\it on boxes} means that $\mu_n(C)\to\mu(C)$ on every box $C$. This is a
rather weak notion of convergence, and in fact it is equivalent to weak
convergence if we put a compact topology on $(J,\BB)$, and all of the measures
$\mu_n$ as well as $\mu$ have the same marginals -- this is left as an exercise to the reader.

If $\eta_{\PP_i}^G\to\eta^G$ on boxes for every exhausting partition sequence,
then we say that $\eta^G$ is {\it partition approximable}. \daku{Note that this in particular means that $t(G,\eta_{\PP_i})\to t(G,\eta)$.}

Since graphons can be approximated by finite graphs via sampling, and the
homomorphism measures of these finite graphs approximate the homomorphism
measure of the graphon, the results on partition approximability of $\eta^G$ can be interpreted as a partial answer to the soficity problem (iii).

\medskip

Now we turn to the definitions needed to generalize homomorphism sets. As we
mentioned above, our goal is to construct measures $\eta^{G[S]}$ on $\BB^S$ for
each induced subgraph $G[S]$ of a graph $G=(V,E)$. This measure should depend
on the induced subgraph $G[S]$ only\footnote{More formally, if $f:S_1\to S_2$ is an isomorphism between $G[S_1]$ and $G[S_2]$, then it contra-variantly induces a function $f^\#:J^{S_2}\to J^{S_1}$ by $f^\#(x)_v:=x_{f(v)}$, and we require the pushforward measure $f_*^\#\mu^{G[S_2]}:=\mu^{G[S_2]}(f^\#)^{-1}$ to be equal to $\mu^{G[S_1]}$.}. Intuitively, the measure $\eta^G$
represents some kind of normalized homomorphism counting of $G$ in $\eta$.

To motivate our approach, consider a graphon $W:~J^2\to\mathbb{R}_+$,
representing $\eta$. Then $\eta^G$ is the measure on $J^V$ whose Radon--Nikodym
derivative with respect to $\pi^V$ is equal to $W^G$ (see \eqref{wgformula}).
These measures satisfy a certain log-modularity property, which relates
$\eta^G$ to measures corresponding to smaller graphs. Assume that $V(G)=V=U\cup
T$ such that there is no edge between $U\setminus S$ and $T\setminus S$, where
$S=U\cap T$. Then $W^GW^{G[S]}=W^{G[U]}W^{G[T]}$. We can rewrite this equation:
\begin{equation}\label{prodprop}
W^G/W^{G[S]}=(W^{G[U]}/W^{G[S]})(W^{G[T]}/W^{G[S]}).
\end{equation}
Note that these quotients have a natural meaning even if $W$ is allowed to
vanish. For example
\[
(W^G/W^{G[S]})(x_1,x_2,\dots,x_n)=\prod_{(i,j)\in E(G)\setminus E(S)}W(x_i,x_j).
\]
One of the key observations is that equation (\ref{prodprop}) has a measure
theoretic interpretation that allows us to extend it to singular measures. The
function $W^G/W^{G[S]}$ can be interpreted as a disintegration of the measure
$\eta^G$ with respect to $\eta^{G[S]}$ (see Proposition \ref{PROP:DISINT}). The
only condition that we need for this type of disintegration is that the
marginal of $\eta^G$ on $J^S$ be absolutely continuous with respect to
$\eta^{G[S]}$. We will call this property of the family of measures {\it
decreasing}.

For a Markov space $\eta$ for which the homomorphism measures $\eta^{G[S]}$ are
defined and have the decreasing property, disintegration yields a family of
measures $\nu_{S,T,x}$, where $x\in J^S$ $(S\subseteq T\subseteq V)$, and
$\nu_{S,T,x}$ is a measure on $J^{T\setminus S}$.

In particular, when $U\subseteq V$ is such that $S=U\cap T$, $V=U\cup T$ and there are no edges between $T\setminus U$ and $U\setminus T$, the equation
(\ref{prodprop}) translates to the condition
\begin{equation}\label{EQ:MARKOV}
\nu_{S,V,x}=\nu_{S,U,x}\times\nu_{S,T,x}.
\end{equation}
In the special case when $S=\emptyset$, this means that the measure assigned to
the disjoint union of two graphs is the product of the measures assigned to
them. We will call \eqref{EQ:MARKOV} the {\it Markovian property} of the family
of the measures. As stated above, this type of Markovian property is not simply
a property of a measure $\eta^G$ by itself, but instead it describes how
various measures $\eta^G$ corresponding to a graph and its induced subgraphs
are related to each other.

In addition, we impose the natural condition that the family of measures
$(\eta^{G[S]}:~S\subseteq V)$ is {\it normalized}, in the sense that
$\eta^{K_2}=\eta$ for a single edge $K_2$ and $\eta^{K_1}=\pi$ for a single
node $K_1$. We say that {\it $G$ is well-measured in $\eta$} if there is a
family of measures $(\eta^{G[S]}:~S\subseteq V)$ that is normalized, decreasing
and Markovian. It will be an important additional property that $\eta^G$ is
finite. This easily implies that all other measures $\eta^{G[S]}$ are finite. In
this case we say that $G$ is well-measured in $\eta$ with finite density.

This Markovian property concept \eqref{EQ:MARKOV} allows for a recursive
construction of measures for a graph utilizing measures of smaller graphs,
decomposing $G$ along a cutset. To initialize the construction, we need the
normalized property. To apply it, we need a proper cutset of nodes in $G$; this
is not available for complete graphs, and this is our main reason for
\lljav{having to exclude} triangles.

\lljav{This recursive construction has important consequences.}

\begin{theorem}\label{THM:UNIQUE} \lljav{If $G$ is a triangle-free graph, and
there is a normalized, decreasing and Markovian family of measures on its
induced subgraphs, then this family is uniquely determined.}
\end{theorem}

\lljav{This implies, in particular, that $\eta^{G[S]}$ depends on the induced
subgraph $G[S]$ only. More precisely, if $G[S]$ and $G[T]$ are isomorphic
induced subgraph and $\xi:~S\to T$ is an isomorphism, then the pushforward of
$\eta^{G[S]}$ to $J^T$ is $\eta^{G[T]}$.}

\lljav{We will use this construction to prove the existence of such families of
measures,} but it will be a nontrivial question under what conditions are the
measures independent of the choice of the particular way of building up the
graph.

To guarantee the decreasing property for our measures thus constructed we will
have to assume the decreasing property for small stars, which translates to the
following ``smoothness'' property of the Markov space $\eta$. Choose a point
$x$ from the stationary distribution $\pi$, and make $k$ independent single
steps $y_1,\dots,y_k$ each starting from $x$ so that $(x,y_i)\sim\eta$, and the $y_i$'s are conditionally mutually independent given $x$. Let $\sigma_k$ denote the joint
distribution of $(y_1,\dots,y_k)$. We say that the Markov space is {\it
$k$-loose}, if $\sigma_k$ is absolutely continuous with respect to $\pi^k$. We
shall also make use of a further refinement of this notion: a $k$-loose Markov
space is {\it $(k,p)$-loose}, if the Radon--Nikodym derivative
$d\sigma_k/d\pi^k$ is in $L^p(\pi^k)$. This technical condition will turn out
to be equivalent to the property that the complete bipartite graph $K_{k,p}$ is
well-measured in $\eta$ with finite density (see Corollary \ref{COR:KAB-WELL}).

The following result (see Section \ref{SSEC:TRIANGLE-FREE}) will allow us to
define the measure $\eta^G$ (which is not a finite measure in general, see
Example \ref{EXA:GRAPHON}).

\begin{theorem}\label{THM:MAIN-KAB}
Let $G=(V,E)$ be a triangle-free graph, and let $\Mb=(J,\BB,\eta)$ be a Markov
space such that every complete bipartite subgraph $K_{a,b}$ of $G$ is
well-measured in $\Mb$. Then $G$ is well-measured in $\Mb$.
\end{theorem}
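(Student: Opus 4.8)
The plan is to induct on $|V(G)|$, building up the family $(\eta^{G[S]}:~S\subseteq V)$ by attaching nodes one at a time along proper cutsets, and then to verify the normalized, decreasing, and Markovian properties. For the induction step, if $G$ has at most two vertices there is nothing to prove beyond the normalization $\eta^{K_1}=\pi$, $\eta^{K_2}=\eta$. So suppose $|V|\ge 3$. Since $G$ is triangle-free, no vertex $v$ of positive degree has its neighborhood forming a clique with any other vertex in a way that obstructs a split; concretely, pick any vertex $v$ and let $N=N(v)$, $U=V\setminus\{v\}$, $T=N\cup\{v\}$, so that $S:=U\cap T=N$ and there is no edge between $U\setminus S$ and $T\setminus S=\{v\}$ (edges from $v$ go only into $N=S$). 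By induction, $G[U]$ is well-measured in $\Mb$ (it is triangle-free and each of its complete bipartite subgraphs is one of $G$, hence well-measured), giving us the measures $\eta^{G[W]}$ for all $W\subseteq U$. The star $G[T]$ with center $v$ and leaves $N$ is a complete bipartite graph $K_{1,|N|}$, hence well-measured in $\Mb$ by hypothesis, giving $\eta^{G[T]}$ and its induced-subgraph measures. I would then \emph{define} $\eta^G$ via \eqref{EQ:MARKOV}: disintegrate $\eta^{G[T]}$ over its marginal on $J^S$ to obtain the kernel $\nu_{S,T,x}$ (a measure on $J^{\{v\}}$ for $\eta^{G[S]}$-a.e.\ $x$), and set
\[
\eta^G(A):=\int_{J^U}\Bigl(\int_{J^{\{v\}}}\one_A(x,y)\,d\nu_{S,T,x|_S}(y)\Bigr)\,d\eta^{G[U]}(x),\qquad A\in\BB^V,
\]
which is exactly forcing $\eta^G=\eta^{G[U]}\times_{\eta^{G[S]}}\eta^{G[T]}$ as required by the Markovian condition for the cut $(U,T)$. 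For $S'\subsetneq V$ we take $\eta^{G[S']}$ from the sub-family already constructed for $G[U]$ or $G[T]$ — here one must check these agree on the overlap, which reduces to the corresponding agreement in the (uniquely determined) well-measured structures on $G[U]$ and $G[T]$.

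Next I would check the three axioms for the enlarged family. Normalization is immediate since $\eta^{K_1},\eta^{K_2}$ are unchanged. For the \emph{decreasing} property I need that for every $S'\subseteq V$ the marginal of $\eta^G$ on $J^{S'}$ is absolutely continuous with respect to $\eta^{G[S']}$; for $S'\subseteq U$ this follows because the $J^U$-marginal of $\eta^G$ is, by the displayed formula, precisely $\eta^{G[U]}$ (the inner integral is a probability measure when $\nu$ comes from a disintegration of a finite measure — one should normalize so that $\nu_{S,T,x}$ is the conditional measure and the total mass of $\eta^{G[T]}$ is absorbed correctly), so decreasingness is inherited from $G[U]$; for $S'$ containing $v$ one reduces to the $G[T]$ side together with the fact that $G[S']$'s structure, by induction on $|V|$, is already the well-measured one, after checking absolute continuity of the relevant marginal — and this is the point where the \emph{smoothness hypothesis on small stars} (the $k$-looseness encoded in every $K_{a,b}\subseteq G$ being well-measured) is actually used, via the decreasing property of $\eta^{G[T]}=\eta^{K_{1,|N|}}$. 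The \emph{Markovian} property \eqref{EQ:MARKOV} for the specific cut $(U,T)$ holds by construction; for a \emph{general} cut $V=U'\cup T'$ with $S'=U'\cap T'$ and no edge between $U'\setminus S'$ and $T'\setminus S'$, I would verify \eqref{EQ:MARKOV} by disintegrating the constructed $\eta^G$ and using associativity of the conditional-product operation, reducing the general cut to iterated single-vertex attachments.

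The main obstacle — and the reason the theorem is restricted to triangle-free $G$ — is exactly the \textbf{well-definedness}: the measure $\eta^G$ just constructed a priori depends on which vertex $v$ we peeled off and in which order, whereas the conclusion ``$G$ is well-measured'' only asserts \emph{existence} of some normalized/decreasing/Markovian family, so strictly speaking for this theorem I merely need one valid construction. But to have a usable object (and for the induced-subgraph measures to be consistent across the two recursive branches) I expect the real work to be an \emph{order-independence lemma}: any two build-up orders yield the same family, proved by showing both satisfy \eqref{EQ:MARKOV} for all admissible cuts and then invoking the uniqueness statement flagged in the excerpt (``if $G$ is triangle-free, then this family of measures, if it exists, is uniquely determined''). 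Triangle-freeness enters because a triangle has no proper cutset, so the recursion cannot start; for triangle-free $G$, every induced subgraph on $\ge 3$ vertices has a vertex whose removal leaves a proper cut, which keeps the recursion going and, crucially, makes every ``atomic'' piece a star $K_{1,k}$ — precisely the graphs for which the smoothness/looseness hypotheses of $\Mb$ give us the needed absolute-continuity (decreasing) control. A secondary technical point to handle carefully is the manipulation of disintegration kernels when measures may be infinite: one should work with the finite-density case (where $t(G,\eta)<\infty$, so all $\eta^{G[S]}$ are finite and disintegrations are honest conditional probabilities) and then pass to the general case by a $\sigma$-finite exhaustion or by noting that the defining identities are local and survive the limit on boxes.
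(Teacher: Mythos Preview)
Your overall inductive strategy and your identification of order-independence as the crux are both right, and match the paper. But the argument you sketch for order-independence is circular, and you under-use the hypothesis in a way that makes the proposal actually fail.

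You propose to prove order-independence by showing that each build-up order yields a Markovian family and then invoking uniqueness. But verifying the Markovian property \eqref{EQ:MARKOV} for an \emph{arbitrary} cut $(U',T')$ of the measure $\eta^G$ built from one \emph{fixed} vertex $v$ amounts to showing that building from $v$ and building along $(U',T')$ give the same measure --- which is order-independence again. Your phrase ``associativity of the conditional-product operation, reducing the general cut to iterated single-vertex attachments'' does not break this circle. The paper proceeds in the opposite order: it proves order-independence \emph{first}, by a purely combinatorial lemma (Lemma~\ref{LEM:REORDER}) stating that any function $F$ on orderings of a triangle-free graph which is invariant under (i) swapping two consecutive non-adjacent nodes, and (ii) swapping two consecutive blocks $A$, $B$ with every vertex of $A$ adjacent to every vertex of $B$, must be constant. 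Invariance (i) is trivial for $p\mapsto\eta_p$; invariance (ii) is exactly where the hypothesis that every $K_{a,b}\subseteq G$ is well-measured enters. Once all orderings agree, the Markovian property for any cut $(U',T')$ follows by simply choosing an ordering starting with $S'=U'\cap T'$, and the decreasing property likewise by starting with $S'$ and applying \eqref{EQ:FF-ABS-CONT}.

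Relatedly, your argument only ever invokes the hypothesis for stars $K_{1,|N|}$, and your last paragraph explicitly claims that triangle-freeness ``makes every `atomic' piece a star $K_{1,k}$'', suggesting the general $K_{a,b}$ assumption is superfluous. It is not: Example~\ref{EXA:KAB-ORT} (orthogonality on $S^{d-1}$) gives a $(d-1)$-loose Markov space in which all the relevant stars are well-measured, yet for $K_{a,b}$ with $a+b=d+1$ (already $C_4=K_{2,2}$ in $S^2$) the two sequential constructions starting from opposite color classes give genuinely different measures. This is precisely the obstruction that move (ii) in Lemma~\ref{LEM:REORDER} handles, and it is why the theorem needs the full $K_{a,b}$ hypothesis rather than just $k$-looseness. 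A smaller point: the $J^U$-marginal of your constructed $\eta^G$ is not $\eta^{G[U]}$ but $s^{F_v}(\cdot|_N)\cdot\eta^{G[U]}$; it is absolutely continuous with respect to $\eta^{G[U]}$, which is what you need, but not equal to it.
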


The condition implies that the Markov chain is $k$-loose, where $k$ is the
maximum degree of $G$. If the graph contains no $4$-cycles, then the stars are
the only complete bipartite subgraphs; hence we can state the following
corollary.

\begin{corollary}\label{COR:MEASURE}
Let $(J,\BB,\eta)$ be a $k$-loose Markov space. Then every graph of girth at
least $5$ and with all degrees at most $k$ is well-measured in $\eta$.
\end{corollary}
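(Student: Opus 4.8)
The plan is to deduce this directly from Theorem~\ref{THM:MAIN-KAB}. Let $G$ be a graph of girth at least $5$ with all degrees at most $k$. Girth at least $5$ means that $G$ has no cycle of length $3$ or $4$; in particular $G$ is triangle-free, so by Theorem~\ref{THM:MAIN-KAB} it suffices to check that every complete bipartite subgraph $K_{a,b}$ of $G$ is well-measured in $\Mb$.

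First I would observe that the only complete bipartite subgraphs of $G$ are stars. Indeed, if $K_{a,b}$ with $a\ge 2$ and $b\ge 2$ were a subgraph of $G$, then $K_{2,2}$ would be one as well, i.e.\ $G$ would contain a $4$-cycle, contradicting the girth hypothesis. Hence every complete bipartite subgraph of $G$ is a star $K_{1,m}$ (the degenerate cases $m=0$ and $m=1$ being $K_1$ and $K_2$, which are well-measured in every Markov space by the normalized property, since a single edge admits no proper cutset). Moreover the centre of such a star is adjacent in $G$ to each of its $m$ leaves, so $m\le k$.

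Next I would verify that a $k$-loose Markov space makes every star $K_{1,m}$ with $m\le k$ well-measured. Note that $k$-looseness implies $m$-looseness for every $m\le k$: $\sigma_m$ is the image of $\sigma_k$ under the projection onto the first $m$ step-coordinates, $\pi^m$ is the image of $\pi^k$ under the same projection, and absolute continuity is preserved under images of measures. Writing $c$ for the centre and $\ell_1,\dots,\ell_m$ for the leaves of $K_{1,m}$, let $\eta^{K_{1,m}}$ be the law on $J^{V(K_{1,m})}$ of $(x,y_1,\dots,y_m)$, where $x\sim\pi$ and, conditionally on $x$, the $y_i$ are independent single steps from $x$; and for $S\subseteq V(K_{1,m})$ put $\eta^{K_{1,m}[S]}$ equal to the marginal of $\eta^{K_{1,m}}$ on $J^S$ when $c\in S$, and equal to $\pi^S$ when $c\notin S$ (both prescriptions are the induced-subgraph measures of the relevant sub-star resp.\ edgeless graph). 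Normalization ($\eta^{K_2}=\eta$, $\eta^{K_1}=\pi$) is immediate. For the decreasing property the only nontrivial marginals are those onto a set $S$ of leaves, which equal $\sigma_{|S|}$ and are absolutely continuous with respect to $\eta^{K_{1,m}[S]}=\pi^{|S|}$ precisely because $\Mb$ is $|S|$-loose. For the Markovian property \eqref{EQ:MARKOV}, consider any decomposition $V(K_{1,m})=U\cup T$ with $S=U\cap T$ and no edge between $U\setminus S$ and $T\setminus S$: if $c\in U\setminus S$ (or $c\in T\setminus S$) then $c$ is adjacent to every leaf, forcing $T\setminus S=\emptyset$ (resp.\ $U\setminus S=\emptyset$) and \eqref{EQ:MARKOV} is trivial; and if $c\in S$, then conditionally on the coordinates indexed by $S$ -- which include the value at $c$ -- the leaf coordinates are mutually independent, so their conditional joint law factors over $U\setminus S$ and $T\setminus S$, which is exactly \eqref{EQ:MARKOV}.

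Putting the pieces together: since $G$ is triangle-free and every complete bipartite subgraph of $G$ is a star $K_{1,m}$ with $m\le k$, and each such star is well-measured in $\Mb$ by the previous paragraph, Theorem~\ref{THM:MAIN-KAB} yields that $G$ is well-measured in $\Mb$. I expect the only delicate point -- and it is a short one -- to be the combinatorial reduction, namely that the absence of $4$-cycles collapses all complete bipartite subgraphs to stars, together with the routine but slightly tedious bookkeeping that the explicit star family above really is normalized, decreasing and Markovian, with the looseness hypothesis entering exactly at the single marginal (onto a leaf set) where it is needed.
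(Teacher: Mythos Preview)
Your proposal is correct and follows exactly the route the paper takes: deduce the corollary from Theorem~\ref{THM:MAIN-KAB} by observing that in a graph of girth at least $5$ the only complete bipartite subgraphs are stars $K_{1,m}$ with $m\le k$, and then check that such stars are well-measured in a $k$-loose space. The paper states only the combinatorial reduction explicitly and leaves the verification for stars implicit; you spell it out, which is fine.

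One small remark on the Markovian check: your case analysis is phrased as ``any decomposition $V(K_{1,m})=U\cup T$'', but the Markov property in the paper is required for all $U,W\subseteq V$, not only those with $U\cup W=V$. This adds the trivial case $c\notin U\cup W$, where $U$ and $W$ consist of leaves only, all measures are products of $\pi$, and the identity is immediate. No change of idea is needed.
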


It will turn out that densities of bipartite graphs are much better behaved,
and we have more transparent formulas for them. Using these formulas, we will
prove the following  (see Sections \ref{SSEC:PART-APPROX} and
\ref{subsect:thm1.4}).

\begin{theorem}\label{THM:DENSITY}
Let $(J,\BB,\eta)$ be an $(a,b)$-loose Markov space. Let $G=(V,E)$ be a
bipartite graph with bipartition $V=U\cup W$ such that $\deg(w)\leq a$ for all
$w\in W$ and $\deg(u)\leq b$ for all $u\in U$. \lljav{Then $G$ is well-measured
in $\eta$ with finite density, and $\eta^G$ is partition approximable.}
\end{theorem}

Densities of cycles are particularly interesting because of their connection
with operator theory. Every Markov space $(J,\BB,\eta)$ acts naturally as a
bounded operator $\Ab_\eta$ on $L^2(J,\pi)$. We prove the next theorem (see
Theorem \ref{cycdense2}).

\begin{theorem}\label{THM:SCHATTEN}
If the $k$-th Schatten norm of $\Ab_\eta$ is finite for some $k\in\mathbb{N}$,
then $C_k$ is well-measured in $\eta$ \lljav{with finite density}, $\eta^{C_k}$
is partition approximable, and $t(C_k,\eta)=\tr(\Ab_\eta^k)$.
\end{theorem}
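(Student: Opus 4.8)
The plan is to route everything through the operator picture and the partition approximation. First I would recall the key identity relating projections to compressions: if $\PP$ is a finite partition of $J$ into positive–measure Borel classes and $P_\PP$ is the orthogonal projection of $L^2(J,\pi)$ onto the $\PP$-measurable functions, then the integral operator $T_{\eta_\PP}$ of the graphon $\eta_\PP$ equals $P_\PP\Ab_\eta P_\PP$. For an exhausting partition sequence $(\PP_i)$ one has $P_{\PP_i}\to I$ strongly, and since $\Ab_\eta\in\SS_k$ (the Schatten $k$-class), a routine finite-rank approximation gives $P_{\PP_i}\Ab_\eta P_{\PP_i}\to\Ab_\eta$ in the Schatten $k$-norm. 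Because $\Ab_\eta^k\in\SS_1$ the number $\tr(\Ab_\eta^k)$ is finite, and $A\mapsto\tr(A^k)$ is locally Lipschitz on $\SS_k$ (telescope $A^k-B^k$, then Hölder for Schatten norms), so already $t(C_k,\eta_{\PP_i})=\tr\bigl((P_{\PP_i}\Ab_\eta P_{\PP_i})^k\bigr)\to\tr(\Ab_\eta^k)$.

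The same mechanism controls the measures. For a box $B=B_1\times\cdots\times B_k$, with $D_{B_\ell}$ the operator of multiplication by $\one_{B_\ell}$, one has $\eta_\PP^{C_k}(B_1\times\cdots\times B_k)=\tr\bigl(D_{B_1}T_{\eta_\PP}D_{B_2}T_{\eta_\PP}\cdots D_{B_k}T_{\eta_\PP}\bigr)$, since a cyclic trace of integral operators interleaved with multiplication operators is exactly the corresponding cyclic integral of the kernels. The multilinear map $(A_1,\dots,A_k)\mapsto\tr(D_{B_1}A_1\cdots D_{B_k}A_k)$ is jointly continuous on $\SS_k^{\,k}$ (Hölder again), hence $\eta_{\PP_i}^{C_k}(B)\to\tr(D_{B_1}\Ab_\eta\cdots D_{B_k}\Ab_\eta)$ for every box $B$, with a limit that does not depend on the partition sequence. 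These limit values are non-negative (limits of values of honest measures), they sum over $\QQ^k$ to $\tr(\Ab_\eta^k)$ for any finite partition $\QQ$ (because $\sum_{Q\in\QQ}D_Q=I$), and they are consistent under refinement ($D_B$ is additive in $B$). A standard extension argument on the standard Borel space $J^k$ — equivalently, a weak-limit argument for the uniformly bounded family $\{\eta_{\PP_i}^{C_k}\}$ — then produces a finite Borel measure $\eta^{C_k}$ on $\BB^k$ with $\eta^{C_k}(B)=\tr(D_{B_1}\Ab_\eta\cdots D_{B_k}\Ab_\eta)$ on boxes and total mass $\tr(\Ab_\eta^k)$. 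Consequently $\eta_{\PP_i}^{C_k}\to\eta^{C_k}$ on boxes for every exhausting partition sequence, which is partition approximability, and $t(C_k,\eta)=\eta^{C_k}(J^k)=\tr(\Ab_\eta^k)$.

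It remains to produce the whole family $(\eta^{C_k[S]}:S\subseteq V)$ witnessing that $C_k$ is well-measured. Each induced subgraph $C_k[S]$ is a disjoint union of paths, and the homomorphism measure of a path $P_\ell$ in $\eta_\PP$ is the analogous non-cyclic alternating product $\langle\one,D_{B_0}T_{\eta_\PP}D_{B_1}\cdots T_{\eta_\PP}D_{B_\ell}\one\rangle$; since a product of $\ell<k$ copies of $\Ab_\eta$ lies in $\SS_{k/\ell}\subseteq\SS_\infty$ and converges there, these path measures again converge on boxes, and taking products over the path components defines $\eta^{C_k[S]}$. That this family is normalized ($\eta^{C_k[\{v\}]}=\pi$, $\eta^{K_2}=\eta$) and Markovian is inherited from the corresponding identities for the step-function graphons $\eta_{\PP_i}$: both \eqref{EQ:MARKOV} and the normalization are equalities among box values, and the product/gluing structure is continuous in the box topology once the relevant marginals are pinned down, so they survive the limit. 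Finiteness of the whole family then follows from finiteness of $\eta^{C_k}$, which we have established.

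The delicate point, and the step I expect to be the real obstacle, is the \emph{decreasing} property: absolute continuity of the marginal of $\eta^{C_k}$ on $J^S$ with respect to $\eta^{C_k[S]}$ is \emph{not} automatically preserved under box limits, so the Schatten hypothesis must be used quantitatively, not merely through compactness. The mechanism I would exploit is that cutting the cycle along a set $S$ and marginalizing the omitted vertices reinserts a power $\Ab_\eta^m$ between two vertices of $S$; whenever $m\ge\lceil k/2\rceil$ one has $\Ab_\eta^m\in\SS_{k/m}\subseteq\SS_2$, so the associated bimeasure has an honest $L^2(\pi^2)$ density, and the marginal of $\eta^{C_k}$ on $J^S$ can be written \emph{explicitly} as that density times $\eta^{C_k[S]}$ — which proves the absolute continuity and identifies the Radon–Nikodym derivative at once. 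The remaining ranges of $m$ (when $S$ contains a long contiguous arc, so the closing-up power is small) must be handled dually, by using instead that the long path $C_k[S]$ itself already carries a high power of $\Ab_\eta$ in the joint marginal of its endpoints; reconciling these two regimes cleanly is where the bulk of the work lies. (When $k$ is even, $C_k$ is bipartite with all degrees $2$, and one may alternatively derive well-measuredness and finiteness from Theorem~\ref{THM:DENSITY}, once the requisite looseness of $\eta$ has been extracted from the Schatten condition, while the trace formula and partition approximability still come from the operator argument above.)
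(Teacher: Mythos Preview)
Your route to the trace formula is correct and genuinely different from the paper's. Section~7 works eigenvalue by eigenvalue: Lemma~\ref{le:CFW} uses the Courant--Fischer--Weyl min-max principle to sandwich each $\lambda_j^\pm(P_{\HH_i}\Ab P_{\HH_i})$ between $0$ and $\lambda_j^\pm(\Ab)$, Proposition~\ref{cycdense} upgrades this to $\lambda_j^\pm(P_{\HH_i}\Ab P_{\HH_i})\to\lambda_j^\pm(\Ab)$ for each fixed $j$ via strong convergence $P_{\HH_i}\to I$, and dominated convergence then sums the $k$-th powers. You instead prove $P_{\PP_i}\Ab P_{\PP_i}\to\Ab$ in the Schatten class $\SS_k$ directly and invoke local Lipschitz continuity of $A\mapsto\tr A^k$. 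Your argument is shorter, does not rely on the spectral ordering, and---more importantly---localizes: the same H\"older estimate applied to $\tr(D_{B_1}A\cdots D_{B_k}A)$ gives convergence of the full measures $\eta_{\PP_i}^{C_k}$ on boxes, which the paper's Section~7 does not carry out (Theorem~\ref{cycdense2} is a statement about the total mass only).

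On well-measuredness you are right to flag the decreasing property as the obstacle, and here the paper's Section~7 is in fact silent: the statement of Theorem~\ref{THM:SCHATTEN} in the introduction is stronger than what Theorem~\ref{cycdense2} actually delivers. Your sketched repair has a real gap, however. The ``large gap'' case $m\ge\lceil k/2\rceil$ is fine, but the complementary regime is not handled by your dual argument as stated, and your fallback via Theorem~\ref{THM:DENSITY} for even $k$ needs $(2,2)$-looseness, which by \eqref{EQ:ETA-NORM-KKP} is equivalent to $t(C_4,\eta)<\infty$, i.e.\ $\Ab\in\SS_4$---strictly stronger than $\Ab\in\SS_k$ once $k>4$. (For $k\ge 5$ one would rather try Corollary~\ref{COR:MEASURE}, since $C_k$ then has girth $\ge 5$; but that requires $2$-looseness of $\eta$, which is again not an obvious consequence of $\Ab\in\SS_k$.) So on the well-measured clause neither your proposal nor the paper's Section~7 is complete.
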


Note that the finiteness of the $k$-th Schatten norm implies that $\Ab_\eta$ is
a compact operator with eigenvalues $\{\lambda_i\}_{i=1}^\infty$ such that the
series $\sum_{i=1}^\infty \lambda_i^k$ is absolutely convergent, and we have
$\tr(\Ab_\eta^k)=\sum_{i=1}^\infty \lambda_i^k$. This suggests a third way to
define the density of a graph in Markov spaces using spectral approximations
provided that the operator $\Ab_\eta$ is compact. This direction, however, is
not explored in this paper.

\section{Preliminaries}

\subsection{Notation}

All graphs considered are finite and simple. A {\it bipartite graph} is a graph
that is $2$-colorable. A {\it bigraph} is a bipartite graph with a fixed
bipartition, where the order of bipartition classes is also specified.
Formally, a bigraph is a triple $G=(U,W,E)$, where $E\subseteq U\times W$. Let
$K_{a,b}$ denote the complete bipartite graph with bipartition $U\cup W$, where
$|U|=a$ and $|W|=b$. With a slight abuse of notation, we also denote the
bigraph $(U,W,U\times W)$ by $K_{a,b}$.

For a map $x\in J^V$, we denote by $x_V$ the image of $V$ under this map, as a
labeled set $(x_v:~v\in V)$.

For a measurable space $(I,\AA)$, we denote by $\Mf(I,\AA)$ (or simply by
$\Mf(\AA)$) the set of all finite measures on $\AA$. If $\mu\in\Mf(\AA)$ and
$f\in L^1(\mu)$, then we define the measure $f\cdot\mu$ and the number $\mu(f)$
by
\[
(f\cdot\mu)(B) = \intl_B f \,d\mu,\qquad \mu(f)= (f\cdot\mu)(J)=\intl_J f \,d\mu.
\]

If $(I,\AA,\pi)$ is a probability space and $f,g:~I\to\R$ are measurable
functions, then we define
\[
\langle f,g\rangle = \langle f,g\rangle_\pi = \intl_J f(x)g(x)\,d\pi(x).
\]
If $V$ is a finite set, $\emptyset\not=S\subseteq V$, and $\mu$ is a measure on
$\AA^V$, then we denote by $\mu^S$ the marginal of $\mu$ on $\AA^S$.

\subsection{Derivative and disintegration}\label{SEC:DISINT}

Let $\mu$ and $\nu$ be two measures on the same Borel space $(J,\BB)$. We say
that a function $f:~J\to[0,\infty]$ is the {\it Radon--Nikodym derivative} of
$\nu$ with respect to $\mu$, denoted by $f=d\nu/d\mu$, if $\nu=f\cdot\mu$. Note
that we allow infinite values for $f$, under the convention $\int_B \infty
\,d\mu=0$ whenever $\mu(B)=0$. The existence of the Radon--Nikodym derivative is
usually stated for two sigma-finite measures, but we'll need a slightly more
general fact (see Appendix \ref{APP:A}).

Let $(I,\AA)$ and $(J,\BB)$ be measurable spaces, and let $\Phi=(\mu_x:~x\in
I)$ be a family of sigma-finite measures on $(J,\BB)$. We say that $\Phi$ is a
{\it measurable family}, if $\mu_x(B)$ is a measurable function of $x$ for
every $B\in\BB$. \daku{Note that if $\Phi$ is a family of \emph{probability}
measures, then this essentially corresponds to a Markov kernel (see Subsection
\ref{SEC:MARKOV}).}

We need the following version of the Disintegration Theorem (which is usually
stated for the case when $\alpha=\gamma^1$); see e.g. \cite{ChPol} or
\cite{Bog}, Section 10.6.

\begin{prop}\label{PROP:DISINT}
Let $(I,\AA)$ and $(J,\BB)$ be standard Borel spaces. Let $\alpha$ be a
sigma-finite measure on $\AA$, and let $\gamma$ be a sigma-finite measure on
$\AA\times\BB$. Then there is a measurable family $\Phi$ of measures on $\BB$
such that
for every
bounded measurable function $f:~I\times J\to[0,\infty)$,
\begin{equation}\label{EQ:DISINT-D}
\intl_{I\times J} f(x,y) d\gamma(x,y)=\intl_{I}\intl_{J}f(x,y) d\mu_x(y)d\alpha(x).
\end{equation}
if and only if the marginal $\gamma^1$ of $\gamma$ on $\AA$ is absolutely
continuous with respect to $\alpha$. Furthermore, the measurable family
$\Phi=(\mu_x:~x\in I)$ is uniquely determined up to changing $\mu_x$ for $x$ in
a zero $\alpha$-measure subset of $I$.
\end{prop}

We say that the measurable family $\Phi$ is a {\it disintegration} of the
measure $\gamma$ with respect to the measure $\alpha$.

A key component of our constructions will be a ``reverse'' of the
disintegration, essentially integrating a measurable family $\Phi$ with respect
to a suitable measure $\alpha$ to obtain a sigma-finite measure on the product
space. Indeed, consider a measurable family $\Phi$ of finite measures on $\BB$.
For $\alpha\in\Mf(\AA)$, define
\begin{equation}\label{EQ:DISINT}
\alpha[\Phi](A\times B) = \intl_A \mu_x(B)\,d\alpha(x) \qquad(A\in\AA,~B\in\BB).
\end{equation}
If the measures $\mu_x$ are finite and uniformly bounded, this extends to a
finite measure $\alpha[\Phi]$ on $\AA\times\BB$, whose disintegration with
respect to $\alpha$ is trivially $\Phi$.

The marginal of $\alpha[\Phi]$ on $\AA$ is the measure $g\cdot\alpha$, where
$g(x)=\mu_x(J)$. The marginal of $\alpha[\Phi]$ on $\BB$ is the mixture of
$\Phi$ by $\alpha$. The definition also implies that if all of the $\mu_x$, as
well as $\alpha$, are probability distributions, then so is $\alpha[\Phi]$, and
$\alpha$ is the marginal of $\alpha[\Phi]$ on $\AA$. Conversely, if
$\alpha[\Phi]$ and $\alpha$ are probability distributions, then $\mu_x$ is a
probability distribution for $\alpha$-almost all $x\in J$.

An important example of this construction will be the family of distributions
of transition probabilities in a Markov chain; see Section \ref{SEC:MARKOV}
below.

The sigma-finite extension also goes through in case we drop the uniform
boundedness condition, by partitioning $I$ into countably many measurable parts
corresponding to the level sets $[k,k+1)$ ($k\in\mathbb{N}_0$) of the total
measure function $\mu_x(J)$. On each such set uniform boundedness is satisfied,
and the above applies.

Note, however, that for general families $\Phi$ of sigma-finite measures, one
quickly encounters technical difficulties with the extension. Fortunately, as
shown in the following lemma, a family $\Phi$ that arises from a disintegration
is well-behaved.

\begin{lemma}\label{LE:DISINT}
Let $(I,\AA)$ and $(J,\BB)$ be standard Borel spaces. Let $\alpha_1,\alpha_2$
be sigma-finite measures on $\AA$ with $\alpha_2$ absolutely continuous with
respect to $\alpha_1$, and let $\gamma$ be a sigma-finite measure on
$\AA\times\BB$ such that the marginal $\gamma^1$ of $\gamma$ on $\AA$ is
absolutely continuous with respect to $\alpha_1$. Let $\Phi=(\mu_x:~x\in I)$ be
disintegration of $\gamma$ with respect to $\alpha_1$. Then $\alpha_2[\Phi]$
defined via \eqref{EQ:DISINT} extends to a sigma-finite measure on
$\AA\times\BB$.
\end{lemma}

\begin{proof}
As $\alpha_2$ is absolutely continuous with respect to $\alpha_1$, and both are
sigma-finite, we may write $I$ as a countable disjoint union $\bigcup_{k} I_k$
of measurable sets with $\alpha_j(I_k)<\infty$ for all $k$ and $j=1,2$, and it
suffices then to prove the existence of the appropriate extension on each
$I_k\times J$. We may therefore without loss of generality restrict our
attention to the case of both $\alpha_1$ and $\alpha_2$ being finite.

As $\gamma$ is sigma-finite, consider a partition of $I\times J$ into a
countable disjoint union $\bigcup_{k} G_k$ of measurable sets with
$\gamma(G_k)$ finite. Let $\gamma_{G_k}$ be the restriction of $\gamma$ to
$G_k$, and $\Phi_k=(\mu_{k,x}:~x\in I)$ its disintegration with respect to
$\alpha_1$. Note that
\[
\intl_{I} \mu_{k,x}(J) d\alpha(x)=\intl_{I}\intl_{J}1 d\mu_{k,x}(y)d\alpha(x)
=\intl_{I\times J} 1 d\gamma_{G_k}(x,y)=\gamma_{G_k}(I\times J)
\]
is finite, hence we have that $\mu_{k,x}$ is finite for $\alpha_1$-a.e. $x\in I$.

Since the $G_k$'s are disjoint, we have that $\gamma_{G_k}\perp\gamma_{G_\ell}$
for any $k\neq\ell$, and thus also for $\alpha_1$-a.e. $x\in I$,
$\mu_{k,x}\perp\mu_{\ell,x}$. Consequently, we have that $\mu_x=\sum_{k}
\mu_{x,k}$ and it is sigma-finite for $\alpha_1$-a.e. $x\in I$. Since
$\alpha_2$ is absolutely continuous with respect to $\alpha_1$, for product
sets $A\times B$ ($A\in\AA, B\in\BB$), we by \eqref{EQ:DISINT} clearly have
\[
\alpha_2[\Phi](A\times B)=\sum_k \alpha_2[\Phi_k](A\times B).
\]
Since each $\alpha_2[\Phi_k]$ extends to a sigma-finite measure on
$\AA\times\BB$, so does their countable sum $\alpha_2[\Phi]$.
\end{proof}

\begin{remark}
By the above, if $\Phi$ is the disintegration of a sigma-finite measure
$\gamma$ with respect to $\alpha$, we have $\gamma=\alpha[\Phi]$.
\end{remark}

\subsection{Markov property}

Let $(J,\BB)$ be a standard Borel space, and let $V$ be a finite set. Let
$\MM=(\mu_S\in \Mf(\BB^S):~S\subseteq V)$ be a family of measures. We say that
$\MM$ is {\it decreasing}, if for $S\subseteq T\subseteq V$, the marginal
$(\mu_T)^S$ is absolutely continuous with respect to $\mu_S$. A trivial example
of such a family is $\mu_S=\phi^S$ for any $\phi\in\Mf(\BB^V)$, which we call
the {\it marginal family} defined by $\phi$.

If $\MM=(\mu_S\in \Mf(\BB^S):~S\subseteq V)$ is a decreasing family of
sigma-finite measures, then for $S\subseteq T$, the Disintegration Theorem
(Proposition \ref{PROP:DISINT}) gives a measurable family of measures
$N_{S,T}=(\nu_{S,T,x}:~x\in J^S)$ on $\BB^{T\setminus S}$ such that
\begin{equation}\label{EQ:NST-DEF}
\mu_S[N_{S,T}] = \mu_T.
\end{equation}
This definition implies that the Radon--Nikodym derivative $d(\mu_T)^S/d\mu_S$
exists and it can be expressed as
\begin{equation}\label{EQ:RN-MT}
\frac{d(\mu_T)^S}{d\mu_S}(x) = \nu_{S,T,x}(J^{T\setminus S})
\end{equation}
for $\mu_S$-almost all $x\in J^S$.

We can informally think of $\nu_{S,T,x}$ as the measure on extensions of $x$
from $S$ to $T$. This motivates the following ``chain rule''. For $S,T\subseteq
V$ with $S\cap T=\emptyset$ and $x\in J^S$, $y\in J^T$, we denote by $xy\in
J^{S\cup T}$ the union of the maps $x$ and $y$. Then for $S\subseteq T\subseteq
U\subseteq V$, we can first extend $x\in J^S$ to an $xy\in J^T$, and then
extend $xy$ to $U$. Defining $N_{T,U,x}=(\nu_{S,T,xy}:~y\in J^{T\setminus S})$,
we can write this as
\begin{equation}\label{EQ:CHAIN}
\nu_{S,U,x} = \nu_{S,T,x}[N_{T,U,x}]
\end{equation}
for $\mu_S$-almost all $x\in J^S$. Indeed, for every $A\in \BB^S$,
$B\in\BB^{T\setminus S}$ and $C\in\BB^{U\setminus T}$, using
\eqref{EQ:DISINT-D},
\begin{align*}
\intl_A \nu_{S,U,x}(B\times C) \,d\mu_S(x) &= \mu_U(A\times B\times C) = \intl_{A\times B} \nu_{T,U,xy}(C)\,d\mu_T(xy)\\
&=  \intl_{A}\intl_{B} \nu_{T,U,xy}(C)\,d\nu_{S,T,x}(y)\,d\mu_S(x).
\end{align*}
This holds for every $A\in\BB^S$, which proves \eqref{EQ:CHAIN}.

\medskip

Let $G=(V,E)$ be a finite simple graph. Let $\MM=(\mu_S:~S\subseteq V)$ be a
family of sigma-finite measures, with the corresponding disintegrations
$N_{S,T}$. We say that $\MM$ is {\it Markovian}, or has the {\it Markov
property} (with respect to $G$), if it is decreasing, and for any two sets $U,
W\subseteq V$ and $S=U\cap W$ such that no edge connects $U\setminus S$ and
$W\setminus S$, and for $\mu_S$-almost all $x\in J^S$, we have
\begin{equation}\label{EQ:NU-INDEP}
\nu_{S,U\cup W,x}=\nu_{S,U,x}\times\nu_{S,W,x}.
\end{equation}

\begin{lemma}\label{LEM:MARKOV-YDEP}
A decreasing family $\MM=(\mu_S:~S\subseteq V)$ of sigma-finite measures has
the Markov property with respect to a graph $G$ if and only if
\begin{equation*}
\nu_{U, U\cup W,xy} = \nu_{U\cap W,W,x}
\end{equation*}
holds for all $U,W\subseteq V$ \lljav{with no edges connecting $U\setminus W$
and $W\setminus U$, for $\mu_{U\cap W}$-almost all $x\in J^{U\cap W}$ and for
$\nu_{U\cap W,U,x}$-almost all $y\in J^{U\setminus W}$.}
\end{lemma}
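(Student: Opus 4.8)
The plan is to expand both conditions, one pair of sets at a time, using the chain rule \eqref{EQ:CHAIN}, and then to compare them box by box. Fix $U,W\subseteq V$ as in the definition of the Markov property and put $S=U\cap W$. Observe that $(U\cup W)\setminus S$ is the disjoint union of $U\setminus S=U\setminus W$ and $W\setminus S=W\setminus U$, and that $W\setminus U=(U\cup W)\setminus U$. Consequently $\nu_{S,U\cup W,x}$ is a measure on $J^{U\setminus S}\times J^{W\setminus S}$, whereas $\nu_{S,U,x}$ lives on $J^{U\setminus S}$, and $\nu_{S,W,x}$ and $\nu_{U,U\cup W,xy}$ both live on $J^{W\setminus S}$ (here $x\in J^S$, $y\in J^{U\setminus S}$, so $xy\in J^U$); thus every measure occurring in the statement is typed consistently.

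First I would apply the chain rule \eqref{EQ:CHAIN} to the nested triple $S\subseteq U\subseteq U\cup W$ to get $\nu_{S,U\cup W,x}=\nu_{S,U,x}[N_{U,U\cup W}]$ for $\mu_S$-almost all $x$, and then unfold \eqref{EQ:DISINT}--\eqref{EQ:DISINT-D} into
\[
\nu_{S,U\cup W,x}(B\times C)=\intl_B \nu_{U,U\cup W,xy}(C)\,d\nu_{S,U,x}(y)\qquad(B\in\BB^{U\setminus S},~C\in\BB^{W\setminus S}).
\]
On the same box the product measure gives $(\nu_{S,U,x}\times\nu_{S,W,x})(B\times C)=\nu_{S,U,x}(B)\,\nu_{S,W,x}(C)=\intl_B \nu_{S,W,x}(C)\,d\nu_{S,U,x}(y)$, with the integrand constant in $y$. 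Since boxes generate $\BB^{(U\cup W)\setminus S}$ and the measures involved are sigma-finite, the Markov identity \eqref{EQ:NU-INDEP} for this pair $(U,W)$ is equivalent to the equality of these two box-values for all $B$ and $C$ and $\mu_S$-almost all $x$. Fixing $C$ and varying $B$, this in turn says that the $\nu_{S,U,x}$-integrable function $y\mapsto\nu_{U,U\cup W,xy}(C)$ equals the constant $\nu_{S,W,x}(C)$ for $\nu_{S,U,x}$-almost all $y$; finally, running $C$ over a countable algebra generating $\BB^{W\setminus S}$ (available because $(J,\BB)$ is standard Borel), I would assemble these into the single identity $\nu_{U,U\cup W,xy}=\nu_{S,W,x}$, valid for $\mu_S$-almost all $x$ and $\nu_{S,U,x}$-almost all $y$. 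As $S=U\cap W$ and $U\setminus S=U\setminus W$, this is precisely the identity displayed in the lemma for the pair $(U,W)$, and since the chain of steps consists of equivalences, taking the conjunction over all pairs $(U,W)$ on which \eqref{EQ:NU-INDEP} is imposed yields both implications of the lemma.

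The step I expect to require the most care is the last ``assembling'': passing from the family of statements ``for each Borel $C$, $\nu_{U,U\cup W,xy}(C)=\nu_{S,W,x}(C)$ for $\nu_{S,U,x}$-almost all $y$'' to a single $\nu_{S,U,x}$-null exceptional set of $y$'s that works for every Borel $C$ simultaneously, while keeping that set, the $\mu_S$-null exceptional set of $x$'s, and the measurability of the latter's dependence on $x$ all under control. This is where standardness of $(J,\BB)$ is used (to obtain a countable generating algebra) together with sigma-finiteness of the $\mu_S$, which passes to the disintegrations $\nu_{S,T,x}$ for $\mu_S$-almost every $x$; the remaining steps are a routine combination of \eqref{EQ:CHAIN}, \eqref{EQ:DISINT-D}, and the fact that a sigma-finite measure is determined by its restriction to a generating algebra.
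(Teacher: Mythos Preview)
Your proof is correct and follows essentially the same approach as the paper: apply the chain rule \eqref{EQ:CHAIN} to $S\subseteq U\subseteq U\cup W$, compare the resulting integral over $B$ with the product form on boxes $B\times C$, and deduce the almost-everywhere identity; the reverse direction is the same computation read backwards. You are in fact more careful than the paper about the ``assembling'' step (passing to a countable generating algebra to get a single exceptional null set that works for all $C$), which the paper leaves implicit.
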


In particular, the measure on the left is independent of $y$ almost everywhere.

\begin{proof}
\lljav{To prove the necessity of the condition, let $U$ and $W$ be as in the
lemma, and set} $S=U\cap W$. Suppose that \eqref{EQ:NU-INDEP} holds, then for
all $B\in\BB^{U\setminus W}$ and $C\in\BB^{W\setminus U}$, and $\mu_S$-almost
all $x\in J^S$ we have
\[
\nu_{S,U\cup W,x}(B\times C)= \nu_{S,U,x}(B)\nu_{S,W,x}(C),
\]
but also by \eqref{EQ:CHAIN} and the chain rule,
\begin{align*}
\nu_{S,U\cup W,x}(B\times C) &= \nu_{S,U,x}[N_{U,U\cup W,x}](B\times C)\\
&= \intl_B \nu_{U,U\cup W,xy}(C)\,d\nu_{S,U,x}(y).
\end{align*}
It follows that
\[
\nu_{U,U\cup W,xy}(C) = \nu_{S,W,x}(C)
\]
must hold for all $C\in\BB^{W\setminus S}$, $\mu_S$-almost all $x\in J^S$ and
$\nu_{S,U,x}$-almost all $y\in J^{U\setminus S}$. This proves the necessity of
the condition in the Lemma. The reverse implication follows by a similar
computation.
\end{proof}

Markovian measure families are related to, but different from, Markov random
field on graphs. See Appendix \ref{APP:B} for the details of this connection
(which we don't use in this paper).

\subsection{Markov spaces, graphons and bigraphons}\label{SEC:MARKOV}

A {\it Markov space} consists of a sigma-algebra $(J,\BB)$, together with a
probability measure $\eta$ on $(J\times J,\BB\times\BB)$ whose marginals are
equal. In this paper, we assume that $(J,\BB)$ is a standard Borel
sigma-algebra. In the probability literature, $\eta$ is often called the {\it
ergodic flow}, or {\it ergodic circulation}, and its marginals
$\pi(A)=\eta(A\times J)=\eta(J\times A)$ are the {\it stationary distribution}
of the Markov space $(\BB,\eta)$. A Markov space is {\it symmetric}, if
$\eta(A\times B)=\eta(B\times A)$ for all $A,B\in\BB$. We note already here that beyond Remark \ref{REM:GRAPH2MARKOV}, all Markov spaces will be assumed to be symmetric unless stated otherwise.

Markov spaces are intimately related to Markov chains. A Markov chain is
usually defined on a sigma-algebra $(J,\BB)$, specifying a probability
measure $P_u$ on $\BB$ for every $u\in J$, called the {\it step distributions}.
One assumes that for every $A\in \BB$, the value $P_u(A)$ is a measurable
function of $u\in J$. The map $u\mapsto P_u$ is called a {\it Markov scheme} or
{\it Markov kernel}. To get a Markov space, we also have to assume that the
Markov chain has a {\it stationary distribution} $\pi$ on $\BB$ satisfying
\begin{equation}\label{EQ:PI-DEF1}
\intl_J P_u(A)\,d\pi(u) =\pi(A)
\end{equation}
for all $A\in\BB$, and we fix such a distribution. (A Markov scheme may have
none or more than one stationary distributions.) Then
\begin{equation}\label{EQ:MARKOV-DISINT}
\eta(A\times B) = \intl_A P_u(B)\,d\pi(u)\qquad(A,B\in\BB)
\end{equation}
defines a Markov space. Conversely, every Markov space arises from an
essentially unique Markov scheme this way; this can be constructed by
disintegrating $\eta$ with respect to $\pi$ (see Section \ref{SEC:DISINT}). The
Markov scheme is time-reversible precisely when this Markov space is symmetric.

As a generalization of the notion of bigraphs, we define a {\it bi-Markov space} as a
quintuple $\Mb=(I,J,\AA,\BB,\eta)$, where $(I,\AA)$ and $(J,\BB)$ are standard
Borel spaces, and $\eta$ is a probability measure on $\AA\times\BB$. We denote
the marginals of $\eta$ \daku{on $I$ and $J$ by $\pi_I$ and $\pi_J$},
respectively. While a bi-Markov space does not directly define a Markov chain, the
disintegration of $\eta$ according to $\pi_I$ still makes sense, and gives a
measurable family $(P_u:~u\in I)$ of measures on $(J,\BB)$ such that
\begin{equation}\label{EQ:MARKOV-DISINT2}
\eta(A\times B) = \intl_A P_u(B)\,d\pi_I(u)
\end{equation}
for $A\in\AA$ and $B\in\BB$, similarly to the symmetric case. However, from
a point $u\in I$ you step to a point $w\in J$, so the step cannot be repeated.

In a bigraph $G=(U,W,E)$, we can interchange the bipartition classes to obtain
another bigraph $G^*=(W,U,E^*)$, which is isomorphic to $G$ as an undirected
graph. Similarly, for every bi-Markov space $\Mb=(I,J,\AA,\BB,\eta)$, we can construct
the reverse bi-Markov space $\Mb^*=(J,I,\BB,\AA,\eta^*)$.

\begin{remark}\label{REM:GRAPH2MARKOV}
If we identify the Borel spaces $(I,\AA)$ and $(J,\BB)$ (which is usually
possible), we get an (asymmetric) Markov space, which is a generalization of
directed graphs (digraphs). A symmetric Markov space is a generalization of
undirected graphs, and a bi-Markov space is a generalization of bigraphs. If we
identify $(I,\AA)$ and $(J,\BB)$ and also assume that $\pi_I=\pi_J$, then we look
at a generalization of Eulerian digraphs; these are also equivalent to (not
necessarily reversible) Markov chains with a fixed stationary distribution.

Extending our results to digraphs (Eulerian or not) would be interesting, but
in this paper we only deal with Markov spaces generalizing undirected graphs
and bigraphs: symmetric Markov spaces and bi-Markov spaces. For the rest of this paper,
we drop the adjective ``symmetric''.
\end{remark}

Let $(J,\BB,\pi)$ be a standard Borel probability space, and let
$W:~J^2\to\R_+$ be a {\it graphon}, a symmetric integrable function with
respect to $\pi$. In the theory of dense graph limits, graphons are assumed to
be bounded by $1$, but since then, much of the theory has been extended to the
unbounded case \cite{KKLSz,BCCZ}. If a graphon is bounded, then it can be scaled to a
$1$-bounded graphon. We call $W$ {\it $1$-regular}, if $\int_J
W(x,y)\,d\pi(y)=1$ for all $x$.

Every $1$-regular graphon $W$ determines a Markov space
$\eta_W=W\cdot(\pi\times\pi)$. Trivially, $\eta_W$ is absolutely continuous
with respect to $\pi^2$. Conversely, if we have a Markov space for which $\eta$
is absolutely continuous with respect to $\pi\times\pi$, then the
Radon--Nikodym derivative $W=d\eta/d\pi^2$ is a corresponding $1$-regular
graphon.

Let $(I,\AA,\pi_I)$ and $(J,\BB,\pi_J)$ be standard Borel probability spaces. A
{\it bigraphon} is a bounded measurable function $W:~I\times J\to\R_+$. The
bigraphon is $1$-regular, if
\begin{equation}\label{EQ:W-MARGIN}
\intl_I W(x,\,\cdot\,)\,d\pi_I(x) = \intl_J W(\,\cdot\,,y)\,d\pi_J(y) =1.
\end{equation}
Every $1$-regular bigraphon defines a bi-Markov space by
\[
\eta = W\cdot(\pi_I\times\pi_J).
\]

\subsection{Graphops and linear functionals}

Let us survey some notions related to Markov spaces with a functional analysis
flavor; these were introduced in the theory of {\it action convergence}
\cite{BackSz}.

Every Markov space defines an operator $\Ab=\Ab_\eta:~L^1(\pi)\to L^1(\pi)$ by
\[
(\Ab_\eta f)(x)=\E(f(x')) = P_x(f) = \intl_J f(y)\,dP_x(y),
\]
where $x'$ is the point obtained by a random step from $x$. The integral on the
right is well-defined for $\pi$-almost-all $x\in J$. We call $\Ab$ the {\it
adjacency operator} of the Markov space. This operator is contractive with
respect to any $L^p$-norm $(p\ge 1)$. Hence it maps every subspace $L^p(\pi)$
into itself, and $\|\Ab\|_{p\to p}=1$ for every $p\in [1,\infty]$. The
adjacency operator is monotone, self-adjoint, and $1$-regular (which means that
$\one_J$ is an eigenfunction with eigenvalue $1$). A monotone and self-adjoint
bounded linear operator $L^\infty(\pi)\to L^1(\pi)$ is called a {\it graphop},
so the adjacency operator, restricted to $L^\infty(\pi)$, is a $1$-regular
graphop.

We also note that for every $B\in\BB$ and $\pi$-almost-all $x$,
\begin{equation}\label{EQ:GG-ONE}
(\Ab\one_B)(x)  = P_x(B),
\end{equation}
since for every $A\in\BB$,
\begin{align*}
\intl_A (\Ab\one_B)(x) \,d\pi(x) &= \intl_A\intl_J \one_B(y)\,dP_x(y)\,d\pi(x)
= \intl_{J^2}\one_A(x)\one_B(y)\,d\eta(x,y)\\
&= \eta(A\times B) = \intl_A P_x(B)\,d\pi(x).
\end{align*}
Theorem 6.3 in \cite{BackSz} implies that, conversely, every self-adjoint, monotone,
$1$-regular and contractive operator $\Ab:~L^p(J,\pi)\to L^p(J,\pi)$ $(p\ge 1)$
is the adjacency operator of a Markov space with stationary measure $\pi$.

It is clear that the $k$-th power of the adjacency operator is itself an
adjacency operator of a Markov space. In the Markov chain setting, this
corresponds to considering $k$ consecutive steps as one. The edge measure of
this new Markov space will be denoted by $\eta^k$.

If a Markov space is defined by an $L^2$-graphon (a function in $L^2(\pi^2)$),
then its adjacency operator $\Ab$ is a Hilbert-Schmidt operator, and hence it
is compact. It is well known that for a symmetric operator $\Ab$ on a Hilbert
space and any integer $k\ge 1$, $\Ab^k$ is compact if and only if $\Ab$ is
compact. Often we'll be concerned with Markov spaces for which a finite power
of $\Ab_\eta$ is defined by a graphon, and so $\Ab_\eta$ is a compact operator.
However, see Example \ref{EXA:POWERS} for a Markov space with an ``almost''
compact adjacency operator, to which extensions of our results would be
particularly desirable.

\begin{remark}\label{REM:NODE-MEASURE}
The finite version of the probability measure $\eta$ of a Markov space is the
uniform measure on the edges of a finite graph. The marginal $\pi$ is the
stationary distribution of the random walk, where the probability of a vertex
is proportional to its degree. It is natural to introduce the uniform measure
on the vertices as well. In the general case, this means endowing a Markov
space $(J,\BB,\pi)$ with an additional probability measure $\lambda$ on
$(J,\BB)$. This richer structure would then include non-regular graphons,
general (not necessarily $1$-regular) graphops, and s-graphons as defined in
\cite{KLSz1}. Putting it in a slightly sloppy form,
\[
\text{reversible Markov chain} + \text{stationary distribution} \cong \text{Markov space}
\]
and
\[
\text{Markov space} + \text{vertex distribution} \cong \text{graphop} \cong \text{s-graphon}.
\]
Extending the results of this paper to the case when a vertex-measure is present
is an important task for further research.
\end{remark}

For bi-Markov spaces, the operator $\Ab$ can be defined just as above, except
that $\Ab$ will not be self-adjoint.

\subsection{Partitions}\label{SEC:PARTITIONS}

Let $(J,\BB,\pi)$ be a standard probability space, and let
$\PP=\{J_1,\dots,J_n\}$ be a finite, measurable, non-degenerate partition of
$J$ (this means that $J_i\in\BB$ and $\pi(J_i)>0$). Let $\wh{\PP}$ denote the
(finite) set algebra generated by the partition classes in $\PP$. We denote by
$\PP^k$ the partition of $J^k$ whose classes are the product sets
$J_{i_1}\times\dots\times J_{i_k}$.

\begin{definition}\label{DEF:EXHAUST-FAM}
Let $\RR$ be a countable family of Borel sets. Let $\sigma({\RR})$ denote the
sigma-algebra generated by $\RR$. We say that $\RR$ is {\it generating}, if
$\sigma({\RR})=\BB$. We say that $\RR$ is {\it exhausting} with respect to a
measure $\pi$ on $(J,\BB)$, if for every $A\in\BB$ there is a set
$B\in\sigma({\RR})$ such that $\pi(A\triangle B)=0$. Clearly every generating
family is exhausting.

A {\it partition sequence} is a sequence $(\PP_i)_{i=1}^\infty$ of finite
measurable nondegenerate partitions of $(J,\BB,\pi)$ such that $\PP_{i+1}$ is a
refinement of $\PP_i$. We associate with every partition sequence the set
families $\RR=\bigcup_i\PP_i$ and $\wh\RR=\bigcup_i \wh\PP_i$. We say that a
partition sequence is generating [exhausting], if the family $\RR$ is
generating [exhausting].
\end{definition}

It is easy to see that every set in $\wh\RR$ is a finite union of disjoint
members of $\RR$. The family $\wh\RR$ is closed under finite union, finite
intersection, and complementation, so it is a set algebra.

We note that there is not much difference between talking about exhausting or
generating partition sequences: every exhausting partition sequence can be
transformed in a generating one by changing partitions on a $\pi$-null-set (see
Appendix \ref{APP:C}).

\section{Subgraph densities: known cases}\label{SSEC:SUB-DENSE}

We recall a couple of special classes of Markov spaces where subgraph densities
have been introduced and studied.

\subsection{Graphons}

Subgraph densities (or, to be more exact, homomorphism densities) can be
defined for bounded graphons. In fact, all densities are still finite if we
extend our attention to unbounded symmetric functions $W:~[0,1]^2\to\R_+$ in
$L^\omega$, see \cite{KKLSz}. If the degrees of the graphs mapped into the
graphon are bounded by $p$, then subgraph densities can actually be defined for
all of $L^p$-graphons \cite{BCCZ}. Subgraph densities can also be defined in
graphings, but this seems to be rather different from the dense case. It is
possible that this notion cannot be extended to all Markov spaces; but we will
be able to do so for Markov spaces which are sufficiently rich.

For the question to make sense in more general situations, we modify the
normalization of subgraph densities. Recall that for a graphon
$W:~J^2\to[0,1])$, the density of a graph $G=(V,E)$ in $W$ is defined by the
integral
\begin{equation}\label{EQ:T-DEF}
t(G,W)=\pi^V(W^G)=\intl_{J^V} W^G(x)\,d\pi^V(x),
\end{equation}
where
\begin{equation}\label{EQ:WG-DEF}
W^G(x) = \prod_{ij\in E(G)} W(x_i,x_j) \qquad (x\in J^V).
\end{equation}
If $W=W_H$ is the graphon associated with a graph $H$, then
\[
t(G,W_H)=t(G,H)=\frac{\hom(G,H)}{|V(H)|^{|V(G)|}}
\]
is the homomorphism density of $G\to H$. In this paper we use the normalization
\begin{equation}\label{EQ:TGH}
t^*(G,W)=\frac{t(G,W)}{t(K_2,W)^{|E(G)|}}.
\end{equation}

Note that the right hand side of \eqref{EQ:TGH} is invariant under scaling the
function $W$. If
\[
t(K_2,W)=\intl_{J^2} W\,d\pi^2 =1,
\]
(in particular, if $W$ is $1$-regular) we have $t^*(G,W)=t(G,W)$ for every $G$.

It will be very useful to consider the measure $W^G\cdot\pi^V$  with density
function $W^G$ on $J^V$. This measure has nice properties, for example, it is
Markovian. We call this the {\it density measure} of $G$ in $W$. This
construction will be particularly useful when we generalize the above formulas
to the case when $W$ is not bounded. Then the density \eqref{EQ:T-DEF} may be
infinite, but we still obtain a sigma-finite measure $W^G\cdot\pi^V$ on maps $V\to
J$. See Section \ref{SEC:UNBOUNDED} for a detailed discussion of this
generalization.

For a bigraph $G=(S,T,E)$ and a bigraphon $\Mb=(I,\AA,J,\BB,\pi_I,\pi_J,W)$, there
is a natural version of the subgraph density:
\begin{equation}\label{EQ:T1-DEF}
t(G,W)=\intl_{I^S}\intl_{J^T}\prod_{ij\in E} W(x_i,y_j)d\pi_J^T(y) d\pi_I^S(x).
\end{equation}
Clearly $t(G,W)=t(G^*,W^*)$.

\subsection{Orthogonality spaces}\label{SSEC:ORTH}

Consider the Borel sets in the $(d-1)$-dimensional unit sphere $S^{d-1}$, and
let $\eta$ be the uniform measure on orthogonal pairs of vectors in $S^{d-1}$.
This class of Markov spaces was studied in detail in \cite{KLSz2}. Maps $V\to
S^{d-1}$ that map edges onto orthogonal pairs are called {\it
ortho-homomorphisms}.

\begin{example}\label{EXA:KAB-ORT}
The case of complete bipartite graphs will be important. Consider the complete
bigraph $K_{a,b}=(U,W,U\times W)$, where $|U|=a$, $|W|=b$, and $a+b=d+1$. The
first relevant example is mapping the 4-cycle into $S^2$. Let $x\in
(S^{d-1})^V$ be an ortho-homomorphism. Since the image of $U$ spans a subspace
that is orthogonal to the subspace spanned by the image of $W$, one of the
color classes must be mapped onto linearly dependent vectors. If this
degenerate color class is $U$, then $W$ can be mapped freely into $x(U)^\perp$.
So every homomorphism is degenerate, and if $a,b\ge2$, then there are two
possible degenerations. This means that there is no ``natural'' or
``canonical'' way of defining a measure on ortho-homomorphisms. It is also easy
to observe that the trouble is caused by the fact that making $d$ random single steps
each starting from a given point of $S^{d-1}$, we obtain $d$ linearly dependent points, so the
joint distribution of these $d$ points is singular.

To motivate some of our later arguments, let us try to construct an
ortho-homomorphism of the 4-cycle into $S^3$ by mapping the nodes one-by-one.
The first three nodes can be mapped in an arbitrary order (taking care of the
orthogonality of images of edges). Almost surely the neighbors of the fourth
node will be neither equal nor antipodal, and so this node must be mapped
either on the image of its non-neighbor, or on its antipodal. Leaving instead one of
its neighbors for last, the other pair of non-neighbors will be parallel, so we
obtain a totally different distribution.
\end{example}

It was shown in \cite{KLSz2} that a canonical ``nice'' Markovian sigma-finite
measure on the ortho-homomorphisms into $S^{d-1}$ can be defined for every
graph $G$ not containing $K_{a,b}$ with $a+b=d+1$. Furthermore, the density of
$G$ in $\eta_d$ can also be defined (it may be infinite). The construction
followed the same lines as our treatment in Section \ref{SEC:RAND-MAP} below,
providing explicit formulas in this special case.

\section{Trees}\label{SEC:TREES}

The case of mapping trees into Markov spaces is easy, but it will be a very
useful starting point for the more general case. For a tree $F$, we denote by
$L(F)$ the set of its leaves and by $M(F)$ the set of its interior nodes. In
the case of a tree denoted by $F$, we will set $L=L(F)$ and $M=M(F)$. So
$F\setminus L= F[M]$ is the subtree induced by the internal nodes of $F$.

Let $S_n$ and $P_n$ denote the star and the path with $n$ edges, respectively.
Unless stated otherwise, we label $V(S_k)=\{0,1,\dots,k\}$ with $0$ in the
center. The tree consisting of a single edge $uv$ can be viewed either as a
path $P_1$, or as a star $S_1$. We distinguish them by letting $P_1$ have two
leaves, so $L(P_1)=\{u,v\}$ and $M(P_1)=\emptyset$, and designating one of the
nodes of $S_1$ (say $u$) as its center, and the other one as its leaf, so that
$L(S_1)=\{v\}$ and $M(S_1)=\{u\}$. It will be convenient to consider the tree
$S_0$ with a single node $u$, where we have $L(S_0)=\emptyset$ and
$M(S_0)=\{u\}$.

Let $F$ be a tree and $uv\in E(F)$. The subtree $F_1$ of $F$ induced by $u$ and
all nodes separated from $u$ by the edge $uv$ is called a {\it branch of $F$
attached at $u$}. We denote by $F\setminus F_1$ the subtree obtained by
deleting from $F$ the nodes in $V(F_1)\setminus\{u\}$.

\subsection{Random mappings of trees}\label{SEC:RANDMAP}

Our first step is to show that a random mapping of a tree into a Markov space
can be defined in a robust (and, as we shall see, useful) way. This simple
construction is well-known (branching Markov chains etc.), but we need some
special properties of it.

\begin{definition}\label{DEFI:TREEMAP}
Let $F=(V,E)$ be tree, and $(J,\BB,\eta)$, a Markov space. We define a {\it
random homomorphism of $F$ into $\eta$} as a random map \lljav{$\hb:~V(F)\to
J$}, recursively as follows. If $|V(F)|=1$, then we define $\hb$ as a random
point from $\pi$. If $|V(F)|>1$, then let $u$ be a leaf of $F$, incident with a
single edge $uv$. \lljav{The random map $\hb':~V(F\setminus u)\to J$ is already
constructed. We let $\hb|_{V\setminus u}=\hb'$, and we define $\hb(u)$ by
making a Markov step from the point $\hb'(v)$.} We denote the distribution of
$\hb$ by $\eta^F$. In formula, for $W\in\BB^{V\setminus u}$ and $A\in\BB$,
\begin{equation}\label{EQ:TAU-DEF}
\eta^F(W\times A)= \intl_W P_{x_u}(A) \,d\eta^{F'}(x).
\end{equation}
\end{definition}

We can also describe this construction slightly differently. Let
$(v_1,\dots,v_n)$ be a search order of $V(F)$, i.e. an ordering for which every
node $v_i$ different from the ``root'' $v_1$ is adjacent to exactly one earlier
node $v_{i'}$ ($1\le i'<i$). We select $\hb(v_1)$ from $\pi$, and for
$i=2,\dots,n$ we generate $\hb(v_i)$ by making a Markov step from
$\hb(v_{i'})$. We call this the {\it sequential construction} of the random
map.

\begin{lemma}\label{LEM:TREE-WELLDEF}
The recursive definition \eqref{EQ:TAU-DEF} gives a distribution $\eta^F$ that
is independent of the leaf chosen. Equivalently, if constructed sequentially,
it is independent of the search order chosen.
\end{lemma}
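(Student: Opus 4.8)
The plan is to prove the statement by induction on $|V(F)|$, showing that the sequential construction yields the same distribution regardless of which search order is used; the leaf-independence formulation then follows since choosing a leaf $u$ to attach last corresponds to using a search order ending at $u$. For the base case $|V(F)|=1$ there is nothing to prove, since $\eta^F=\pi$ in every construction. So assume $|V(F)|=n>1$ and that the claim holds for all smaller trees.

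The key observation is a commutation lemma for Markov steps at distinct nodes. Suppose $u$ and $u'$ are two distinct leaves of $F$, with unique neighbors $v$ and $v'$ respectively (possibly $v=v'$). Starting from any intermediate distribution on $V(F)\setminus\{u,u'\}$ (or more precisely, on the already-placed nodes), the two Markov extensions — one sending a step from $\Phi(v)$ to $\Phi(u)$, the other sending a step from $\Phi(v')$ to $\Phi(u')$ — act on disjoint coordinates, and the new coordinate $\Phi(u)$ depends only on the old coordinate $\Phi(v)$ while $\Phi(u')$ depends only on $\Phi(v')$. Hence by Fubini (using that the transition kernels are genuine measurable families of probability measures, as set up in Section~\ref{SEC:MARKOV}), performing the $u$-extension then the $u'$-extension gives the same joint distribution as performing them in the other order. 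This handles the two-leaf case directly.

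For the general step, I would argue as follows. Fix two search orders $(v_1,\dots,v_n)$ and $(v_1',\dots,v_n')$. Let $u=v_n$ and $u'=v_n'$ be their respective last nodes; these are leaves of $F$. If $u=u'$, then $(v_1,\dots,v_{n-1})$ and $(v_1',\dots,v_{n-1}')$ are both search orders of $F\setminus u$, so by the induction hypothesis $\eta^{F\setminus u}$ is the same for both, and then \eqref{EQ:TAU-DEF} produces the same $\eta^F$. If $u\ne u'$, pick any search order $\sigma$ of $F$ whose last two nodes are $u'$ then $u$ (in that order); such an order exists because $F\setminus u$ is a tree having $u'$ as a leaf, so we can take a search order of $F\setminus\{u,u'\}$, append $u'$, then append $u$. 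Now compare $(v_1,\dots,v_n)$ with $\sigma$: both end in $u$, and by the case just handled (same last node) they give the same distribution. Similarly $(v_1',\dots,v_n')$ and the order $\sigma'$ obtained from $\sigma$ by swapping the final two nodes both end in $u'$, so they agree. Finally $\sigma$ and $\sigma'$ differ only by swapping the last two (Markov) steps, which commute by the two-leaf observation above. Chaining these equalities gives $\eta^F$ for $(v_1,\dots,v_n)$ equals $\eta^F$ for $(v_1',\dots,v_n')$.

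The main obstacle is making the commutation-of-steps argument fully rigorous at the measure-theoretic level: one must check that the relevant Markov kernels really do define measurable families of probability measures on the product space, so that the iterated integrals may legitimately be swapped, and that ``attaching a step at a leaf'' genuinely only reads the single coordinate at its neighbor. Both facts are built into the definition of a Markov space via disintegration (Section~\ref{SEC:MARKOV}) and the fact that $P_x$ is $\pi$-a.s.\ a probability measure, so this is routine but needs to be stated carefully; once it is in place, the inductive bookkeeping above is purely combinatorial.
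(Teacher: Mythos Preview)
Your approach is essentially the same as the paper's: induction on $|V(F)|$, with the inductive step reducing to the fact that attaching two distinct leaves $u,u'$ via Markov steps from their respective neighbors can be done in either order (a Fubini argument). The paper writes this out directly as equation \eqref{EQ:TAU-DEF2}.

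There is one genuine gap, however: your induction does not cover the case $|V(F)|=2$. When $F$ is a single edge, the two leaves $u,u'$ are adjacent to each other, so $v=u'$ and $v'=u$; your commutation lemma (``the step to $u$ depends only on the already-placed coordinate $\Phi(v)$'') breaks down because $v=u'$ is precisely the node you are trying to place in the other order. In this case the two search orders give the distributions $\eta(A\times B)=\int_A P_x(B)\,d\pi(x)$ and $\int_B P_y(A)\,d\pi(y)=\eta(B\times A)$, and their equality is exactly the \emph{symmetry} of the Markov space, which must be invoked explicitly. The paper handles $|V(F)|\le 2$ as separate base cases for this reason; once you add the $n=2$ case (using symmetry of $\eta$), your argument goes through and matches the paper's.
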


\begin{proof}
We proceed by induction on the number of vertices in $F$. If $|V(F)|=1$, then
clearly $\eta^F=\pi$, and if $F=P_1$, then we have
\[
\eta^{P_1}(A_1\times A_2)=\eta^{S_1}(A_1\times A_2)= \intl_{A_1}
P_{x_1}(A_2)\,d\pi(x_1) = \eta(A_1\times A_2),
\]
which remains the same when the indices are interchanged by symmetry. Now
suppose that $|V(F)|>2$, and let $u,w$ be two leaves, with neighbors $v$ and
$z$, respectively ($v=z$ is possible). Then $u$ and $w$ are not adjacent, and
so $F''=F\setminus u\setminus w$ is a tree. We have
\begin{equation}\label{EQ:TAU-DEF2}
\intl_{\prod(A_i:~i\in V\setminus u)}
P_{x_v}(A_u)\,d\eta^{F'}(x) = \intl_{\prod(A_i:~i\in V\setminus \{u,w\})}
P_{x_v}(A_u)P_{x_z}(A_w) \,d\eta^{F''}(x).
\end{equation}
We get the same if the roles of $u$ and $w$ are interchanged.
\end{proof}

\subsection{Marginals and conditioning on trees}\label{SSEC:MARGINALS}

We need some properties and associated constructions for the measure $\eta^F$,
where $F$ is a tree. The marginal $(\eta^F)^U$ on a set $U\subseteq V$ is
particularly simple when $U=V(F_1)$ for a subtree $F_1$, since then we can
start a search order of $F$ with a search order of $F_1$, which implies that
\begin{equation}\label{EQ:TREE-MARGIN0}
(\eta^F)^{V(F_1)} = \eta^{F_1}.
\end{equation}
Another simple but useful fact about node sets $U$ of subtrees is that we can
condition on any map $x\in J^U$, since a random extension of it can be
constructed in a well-defined way.

\lljav{In the case when $U=L$ is the set of leaves of $F$, we will denote the
marginal $(\eta^F)^L$ by $\sigma^F$.}

We also need conditioning on maps $z\in J^U$, where $U\subseteq V$ is a general
subset. This is not straightforward, since the measure of a singleton $z$
according to the marginal $\alpha:=(\eta^F)^U$ is typically zero. However, we
can use disintegration: Using the marginal $\alpha=(\eta^F)^U$, Proposition
\ref{PROP:DISINT} implies that there is a measurable family
$\Theta=\Theta^{F,U}=(\theta_z^F:~z\in J^U)$ of distributions on $\BB^{V\setminus U}$
such that $\eta^F=\alpha[\Theta]$, or explicitly
\begin{equation}\label{EQ:THETA-DEF}
\intl_A \theta^F_z(B)\,d\alpha(z) = \eta^F(A\times B)
\end{equation}
for all $A\in J^{V\setminus U}$ and $B\in J^U$. It will be convenient to define
$\Theta^{S_0,\emptyset}$ (recall that $S_0$ is the tree with a single node, no leaves) by
$\theta_\emptyset=\pi$ for the empty sequence $\emptyset$.

We can (informally) think of $\theta_z$ as the distribution of a random copy of
$F$, conditional on the set $U$ being mapped by $z$. Note, however, that
$\theta_z$ is determined only up to an $\alpha$-nullset of mappings $z$. This
fact (and that $\theta_z$ is only implicitly defined) make this construction
useless without some smoothness condition on $\eta$.

It is easy to extend the definition of $\eta^F$ to forests $F$, by taking the
product measure over the connected components. This way we have a measure
$\eta^{F[S]}$ for every $S\subseteq V(F)$. This family of measures, however,
does not have the decreasing property: for example, the marginal of $\eta^F$ on
the set $L$ of leaves is not necessarily absolutely continuous with respect to
$\pi^L$. In the next subsection we introduce properties of the Markov chain that
fixes this (and will play a crucial role for more general graphs as well.)

\subsection{Looseness}\label{SSEC:LOOSE}

We start with one of our main definitions.

\begin{definition}\label{DEF:LOOSE}
We say that the tree $F$ is {\it loose} in the Markov space $\Mb=(J,\BB,\eta)$,
if $\sigma^F$ is absolutely continuous with respect to $\pi^{L(F)}$. In this
case we can define the Radon--Nikodym derivative
\begin{equation}\label{EQ:SF-DEF}
s^F(z)=\frac{d\sigma^F}{d\pi^L}(z)
\end{equation}
\lljav{(determined for $\pi^L$-almost all $z\in J^L$).}
\end{definition}

For the tree $F=S_0$ with a single node $u$, we define $s^F(z_0)=1$ for the
empty sequence $z_0$. The edge $F=S_1$ (with one endpoint in $L$) is loose in
every Markov space, since $\sigma^F=\pi$, and so $s^F(z)\equiv 1$. The edge
$F=P_1$ (with both endpoints in $L$) is loose in $\eta$ if and only if $\eta$
is induced by some (possibly unbounded) graphon $W$; we have then
$s^{P_1}(x_1,x_2)=W(x_1,x_2)$. If $\eta$ is induced by a graphon, then every
tree is loose in $\eta$ (cf.~Section \ref{SEC:UNBOUNDED}).

If $s^F$ exists, then
\begin{equation}\label{EQ:S-INT}
\intl_{J^L} s^F(z)\,d\pi^L(z) = \sigma^F(J^L)=\eta^F(J^V)=1,
\end{equation}
and hence $s^F(z)$ is finite for $\pi^L$-almost all $z$.

If $F$ is loose in $\Mb$, then we can disintegrate $\eta^F$ with respect to
$\pi^L$, to get a measurable family $\Psi^F=(\psi^F_z:~z\in J^L)$ of measures
on $\BB^{M(F)}$ such that
\begin{equation}\label{EQ:PSI-DEF}
\pi^L[\Psi^F]=\eta^F.
\end{equation}
It is easy to see that these measures relate to those obtained by
disintegrating with respect to $\sigma^F$ by the equation
\begin{equation}\label{EQ:PSI-THETA}
\psi^F_z= s^F(z)\theta^F_z.
\end{equation}
We note that the measures $\psi^F_z$ are finite for almost all $z$, but they
are not probability measures in general. In fact,
\begin{equation}\label{EQ:PS-J-SK}
\psi^F_z(J^{M(F)}) = s^F(z)\theta^F_z(J^{M(F)})=s^F(z).
\end{equation}
The measures $\psi^F_z$ are not necessarily absolutely continuous with respect
to $\pi^M$ or $\eta^{F\setminus L}$, but we can state the following simple
lemma:

\begin{lemma}\label{LEM:PSI-NULL}
If $F$ is loose in $\eta$, then for every set $B\in\BB^M$ with
$\eta^{F\setminus L}(B)=0$, we have $\psi^F_z(B)=0$ for $\pi^L$-almost all
$z\in J^L$.
\end{lemma}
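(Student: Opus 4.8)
The plan is to reduce the statement to a statement about the marginal $(\eta^F)^M$ and then to the disintegration identity \eqref{EQ:PSI-THETA}. First I would observe that $F\setminus L=F[M]$, and that by \eqref{EQ:TREE-MARGIN0} applied to the subforest induced on $M$ (each component of $F[M]$ is a subtree of $F$, so the marginal identity holds componentwise and then multiplies out over components), the marginal $(\eta^F)^M$ equals $\eta^{F\setminus L}$. Hence the hypothesis $\eta^{F\setminus L}(B)=0$ is exactly $(\eta^F)^M(B)=0$ for our set $B\in\BB^M$.

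Next I would translate this into a statement about the $\psi^F_z$. By \eqref{EQ:PSI-DEF} we have $\pi^L[\Psi^F]=\eta^F$, so for the cylinder set $J^L\times B$ (viewing $\eta^F$ as a measure on $\BB^L\times\BB^M$),
\[
\intl_{J^L}\psi^F_z(B)\,d\pi^L(z)=\eta^F(J^L\times B)=(\eta^F)^M(B)=\eta^{F\setminus L}(B)=0.
\]
Since each $\psi^F_z$ is a (nonnegative) measure, the integrand $z\mapsto\psi^F_z(B)$ is nonnegative and measurable (measurability is part of $\Psi^F$ being a measurable family), and its integral against $\pi^L$ vanishes; therefore $\psi^F_z(B)=0$ for $\pi^L$-almost all $z\in J^L$, which is the claim.

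The argument is essentially routine once the two ingredients are in place, so there is no serious obstacle; the only point needing a little care is the first step, namely justifying $(\eta^F)^M=\eta^{F\setminus L}$ when $F\setminus L$ is a forest rather than a single tree. Here I would invoke that $\eta^{F[S]}$ for a forest is by definition the product over connected components, and that taking the $M$-marginal of $\eta^F$ commutes with this product because a search order of $F$ can be started by concatenating search orders of the components of $F[M]$ (so that each component's vertices are generated before any leaf outside it is attached), exactly as in the derivation of \eqref{EQ:TREE-MARGIN0}. Alternatively, one can bypass this and simply note that $B\in\BB^M$ with $\eta^{F\setminus L}(B)=0$ forces $\eta^F(J^L\times B)=0$ directly from \eqref{EQ:TAU-DEF} by peeling off the leaves one at a time, since each step integrates a transition kernel $P_{x_v}(\cdot)$ and leaves a null set null.
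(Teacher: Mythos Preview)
Your proof is correct and follows essentially the same route as the paper's: both establish $\eta^F(J^L\times B)=0$ from $\eta^{F\setminus L}(B)=0$ by starting a search order with $M(F)$, then use the disintegration $\pi^L[\Psi^F]=\eta^F$ to conclude. Two minor remarks: your concern about $F[M]$ possibly being a forest is unnecessary, since for a tree $F$ the induced graph $F[M]$ is always connected (the path in $F$ between two internal nodes contains only internal nodes); and where the paper integrates over an arbitrary $A\in\BB^L$ and invokes that the resulting equality holds for all $A$, you integrate only over $J^L$ and use nonnegativity of $\psi^F_z(B)$, which is a slightly cleaner variant of the same step.
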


\begin{proof}
Indeed, $\eta^{F\setminus L}(B)=0$ implies that $\eta^F(A\times B) = 0$ for
every $A\in\BB^L$ (just start a search order of $F$ with $M(F)$). In particular
\[
\intl_{J^L}\psi^F_z(B)\,d\pi^L(z) = \eta^F(J^L\times B)=0,
\]
thus $\psi^F_z(B)=0$ for $\pi^L$-almost all
$z\in J^L$.
\end{proof}

The property of looseness is {\it not} inherited by subtrees; in fact, for the
two most important special trees, monotonicity goes in different directions. It
is easy to see that if the star $S_k$ ($k\ge 2$) is loose in $\eta$, then so is
$S_j$ for $j<k$. On the other hand, if a path $P_k$ ($k\ge 1$) is loose in
$\eta$, then so is $P_j$ for $j>k$.

\begin{theorem}\label{THM:LOOSENESS}
Let $(I,\AA,\eta)$ be a Markov space, let $F$ be a tree, let $F_1$ be a branch
of $F$, and let $F_2$ be obtained from $F$ by removing this branch. If both
trees $F_1$ and $F_2$ are loose in $\eta$, then so is $F$.
\end{theorem}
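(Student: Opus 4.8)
The plan is to set up a common vertex that glues $F_1$ and $F_2$ together and to build a search order of $F$ that respects both branches, so that the sequential construction of $\eta^F$ factors as a disintegration over the branch-point. Let $uv\in E(F)$ be the edge along which $F_1$ is attached at $u$, so that $V(F)=V(F_1)\cup V(F_2)$ with $V(F_1)\cap V(F_2)=\{u\}$, and the only edge between $V(F_1)\setminus\{u\}$ and $V(F_2)\setminus\{u\}$ is $uv$ (with $v\in V(F_2)$). Write $L=L(F)$, $L_i=L(F_i)$. The first step is bookkeeping on leaves: every leaf of $F$ is a leaf of $F_1$ or of $F_2$, with the one possible exception that $u$ may be a leaf of $F_1$ (when $F_1=S_1$) but is never a leaf of $F$, and similarly $v$ may be a leaf of $F_2$ but need not be a leaf of $F$. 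I would dispose of the small/degenerate cases ($F_1$ or $F_2$ equal to $S_0$, $S_1$, or $P_1$) separately using the explicit formulas for $s^{S_0},s^{S_1},s^{P_1}$ recorded just before the statement, and then assume both branches are ``genuine''.

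The heart of the argument is a formula for $\eta^F$ in terms of $\eta^{F_1}$ and $\eta^{F_2}$ via conditioning at $u$. Choose a search order of $F$ that first exhausts $V(F_1)$ (rooted so that $u$ comes last among $V(F_1)$, or at least is produced before we cross the edge $uv$), and then continues with $V(F_2)\setminus\{u\}$ using $v$ as the attachment of the step to any further vertex; since $u\in V(F_2)$ this is a legal search order of $F_2$ once $u$ is placed. Using \eqref{EQ:TAU-DEF} repeatedly (equivalently, the tower property of the sequential construction), one gets
\begin{equation*}
\eta^F(A_1\times A_2) = \intl_{J} \Big(\eta^{F_1}|_{x(u)}\Big)(A_1)\,\Big(\eta^{F_2}|_{x(u)}\Big)(A_2)\,d\pi(x_u),
\end{equation*}
for $A_1\in\BB^{V(F_1)\setminus u}$, $A_2\in\BB^{V(F_2)\setminus u}$, where $\eta^{F_i}|_{x_u}$ denotes the conditional distribution defined in Section \ref{SSEC:MARGINALS} (conditioning on the subtree-node $u$, which is allowed because $\{u\}$ induces a subtree of $F_i$). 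In words: pick the image of $u$ from $\pi$, then independently grow $F_1$ and $F_2$ from it. Now marginalize to $L$. Since $L=L_1\sqcup L_2$ (after the degenerate-case reductions), and since $u\notin L$, the marginal of $\eta^{F_i}|_{x_u}$ on $L_i$ is exactly the conditional leaf-distribution of $F_i$ given $u\mapsto x_u$; integrating $x_u$ against $\pi$ recovers the \emph{joint} leaf distribution $\sigma^{F_i}$ but now correlated through $u$. Concretely, disintegrating $\sigma^{F_i}$ with respect to the image of $u$ gives a measurable family indexed by $J$, and
\begin{equation*}
\sigma^F = \intl_J \big(\text{cond.\ leaf law of }F_1\text{ at }u=x_u\big)\times\big(\text{cond.\ leaf law of }F_2\text{ at }u=x_u\big)\,d\pi(x_u).
\end{equation*}

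From here absolute continuity follows by a change-of-variables/Radon--Nikodym computation. By hypothesis $\sigma^{F_i}\ll\pi^{L_i}$ with density $s^{F_i}$; I would disintegrate each $s^{F_i}\cdot\pi^{L_i}$ over the coordinate ``value at $u$'' — but $u$ is an internal node, not a leaf, so what one actually disintegrates is the measure $\eta^{F_i}$ pushed to $L_i\cup\{u\}$, and then uses looseness to replace the $L_i$-part by $s^{F_i}\,d\pi^{L_i}$. This yields, for $\pi$-a.e.\ $x_u$, a density $g_i(\cdot\,;x_u)$ of the conditional leaf law of $F_i$ with respect to $\pi^{L_i}$, with $\intl_J g_i(z_i;x_u)\,d\pi(x_u)\cdot(\text{something}) = s^{F_i}(z_i)$ in the appropriate sense. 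Plugging these into the displayed integral formula for $\sigma^F$ and using Fubini, one reads off that $\sigma^F\ll\pi^L$ with
\begin{equation*}
s^F(z_1,z_2) = \intl_J g_1(z_1;x_u)\,g_2(z_2;x_u)\,d\pi(x_u),
\end{equation*}
which is a legitimate (finite a.e., by \eqref{EQ:S-INT}) density. The main obstacle I anticipate is purely measure-theoretic hygiene: the conditional objects $\eta^{F_i}|x$ are defined cleanly only for $x$ on a subtree, whereas the disintegration of $\sigma^{F_i}$ at an internal node $u$ is defined only up to $\pi$-nullsets, so I need to argue that these two notions of ``condition on $u$'' agree almost everywhere (they do: both satisfy \eqref{EQ:THETA-DEF} for the marginal on $\{u\}$, which is $\pi$, and that marginal is non-degenerate), and that the resulting families are jointly measurable in $(z,x_u)$ so Fubini applies. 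Once that identification is pinned down, the absolute continuity and the product-integral formula for $s^F$ drop out; I would also remark that the same computation, specialized, re-proves that paths get ``looser'' and stars get ``tighter'' as recorded before the statement.
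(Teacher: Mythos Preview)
Your factorization of $\eta^F$ over the gluing vertex $u$ is correct, and the conditional-independence picture
\[
\sigma^F \;=\; \intl_J (\text{cond.\ leaf law of }F_1\mid u=x_u)\times(\text{cond.\ leaf law of }F_2\mid u=x_u)\,d\pi(x_u)
\]
is exactly the right starting point. The genuine gap is the step ``this yields, for $\pi$-a.e.\ $x_u$, a density $g_i(\cdot;x_u)$ of the conditional leaf law of $F_i$ with respect to $\pi^{L_i}$.'' For the tree in which $u$ is an \emph{internal} node (this is $F_2$ in the paper's convention; note that by the definition of ``branch'' $u$ is \emph{always} a leaf of $F_1$, not only when $F_1=S_1$), looseness of $F_i$ says only that the \emph{mixture} $\sigma^{F_i}=\int\lambda_{x_u}\,d\pi(x_u)$ is absolutely continuous with respect to $\pi^{L_i}$. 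It does \emph{not} imply that the individual slices $\lambda_{x_u}$ are absolutely continuous. In the orthogonality space on $S^{d-1}$ with $F_2$ a path $a\text{--}u\text{--}b$, the conditional leaf law given $u=x_u$ is $P_{x_u}\times P_{x_u}$, supported on the lower-dimensional sphere $x_u^\perp$, hence singular with respect to $\pi^2$ for every $x_u$; yet $F_2$ is loose for $d\ge3$. So no $g_2(\cdot;x_u)$ exists, and the product-integral formula for $s^F$ cannot be written as you propose. The obstacle is not measure-theoretic hygiene: the object you want to write down simply is not there.

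The paper's proof sidesteps this by working purely at the level of absolute continuity, never asking for slice densities on the $F_2$ side. The asymmetry that $u\in L(F_1)$ is exploited: since $\sigma^{F_1}\ll\pi^{L(F_1)}=\pi^{L_1}\times\pi$, Lemma~\ref{LEM:MARGINALS} gives $\sigma^{F_1}\ll(\sigma^{F_1})^{L_1}\times\pi=:\tau$. Writing $\Lambda=(\lambda_{x_u})$ for the conditional leaf law of $F_2$, one has $\sigma^F=\sigma^{F_1}[\Lambda]$ (up to marginalizing out the $u$-coordinate), and Lemma~\ref{LEM:0-EQUI} pushes the absolute continuity through the kernel: $\sigma^{F_1}[\Lambda]\ll\tau[\Lambda]$. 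But $\tau[\Lambda]$ decouples into $(\sigma^{F_1})^{L_1}\times\sigma^{F_2}$, which is $\ll\pi^{L_1}\times\pi^{L_2}$ by the two looseness hypotheses. Your argument can be repaired along these lines: instead of positing $g_2$, observe that $\pi^{L_2}(N_{z_1})=0$ forces $\lambda_{x_u}(N_{z_1})=0$ for $\pi$-a.e.\ $x_u$, and then integrate against $s^{F_1}(z_1,x_u)$; but this is precisely the content of Lemmas~\ref{LEM:0-EQUI} and~\ref{LEM:MARGINALS}.
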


\begin{proof}
Let $F_1$ be attached at $u$, and let $e$ be the edge of $F_1$ incident with
$u$. Let $L_i=L\cap V(F_i)$, then $L(F_1)=L_1\cup\{u\}$, and $L(F_2)$ is either
$L_2$ or $L_2\cup\{u\}$. Let $F'=F\setminus e$, then $F'$ is a forest with two
components $F_1'=F_1\setminus u$ and $F_2$. Let $\tau=\pi\times
(\sigma^{F_1'})^{L_1}$, which is a distribution on $J^{L(F_1)}$, where the
first factor corresponds to $u$.

Let $\lambda_x$ denote the marginal of $\eta^{F_2}$ on $L_2$ conditioned on
$u\mapsto x$, and let $\Lambda=(\lambda_x:~x\in J)$. A random map from
$\lambda_x$ can be generated by using a search order of $F_2$ starting with
$u$. We can also denote $\lambda_x$ by $\lambda_z$ for $z\in J^{L(F_1)}$,
simply ignoring the coordinates other than $z_u$. Then
\[
\sigma^F = \sigma^{F_1}[\Lambda] \qquad\text{and}\qquad (\sigma^{F'})^L = \tau[\Lambda].
\]
By Lemma \ref{LEM:MARGINALS}, we have $\sigma^{F_1}\ll\tau$, and hence by Lemma
\ref{LEM:0-EQUI},
\begin{align}\label{EQ:LFI}
\sigma^F=\sigma^{F_1}[\Lambda]\ll\tau[\Lambda]=(\sigma^{F'})^L.
\end{align}
Clearly $\eta^{F'}=\eta^{F_1'}\times \eta^{F_2}$. Using that
$(\eta^{F_1'})^{L_1}=(\eta^{F_1})^{L_1}$, we have
\begin{align*}
(\sigma^{F'})^L&=(\eta^{F'})^L=(\eta^{F_1'}\times \eta^{F_2})^L = (\eta^{F_1'})^{L_1}\times (\eta^{F_2})^{L_2}
= (\eta^{F_1})^{L_1}\times (\eta^{F_2})^{L_2}\\
&= (\sigma^{F_1})^{L_1}\times (\sigma^{F_2})^{L_2}.
\end{align*}
By hypothesis, $\sigma^{F_i}\ll\pi^{L(F_i)}$ and hence
$(\sigma^{F_i})^{L_i}\ll\pi^{L_i}$. This implies that
$
(\sigma^{F'})^L\ll\pi^L,
$
and combined with \eqref{EQ:LFI}, we are done.
\end{proof}

The notion of $k$-looseness defined in the Introduction is the special case of
looseness of the tree $F=S_k$ (the star with $k$ leaves). We have
$\sigma^{S_k}=\sigma_k$; note that $\sigma_1=\pi$. For every $k$, $\sigma_k$ is
a measure on $k$-tuples of points of $J$ (ordered, but $\sigma_k$ is invariant
under permuting the nodes). If $\eta$ is $k$-loose, then we can define the
function
\begin{equation}\label{EQ:SK-DEF}
s_k(x_1,\dots,x_k) = s_k^\eta(x_1,\dots,x_k) = \frac{d\sigma_k}{d\pi^k}(x_1,\dots,x_k).
\end{equation}
Also recall that $\eta$ is {\it $(k,p)$-loose}, if the function $s_k$ is not
only in $L^1(\pi^k)$ (which follows by the definition) but in $L^p(\pi^k)$.

With this notion, we have the following corollary to Theorem
\ref{THM:LOOSENESS}.

\daku{
\begin{corollary}\label{COR:LOOSENESS}
For any $k\geq 2$ and tree $F$, if the maximum degree satisfies $2\leq \Delta(F)\leq k$, then $F$ is loose in every $k$-loose Markov space.
\end{corollary}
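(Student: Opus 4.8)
The plan is to induct on the number of vertices of $F$, using Theorem~\ref{THM:LOOSENESS} to split $F$ into two strictly smaller trees; the delicate point is the choice of base cases. Since $\eta$ is $k$-loose, the star $S_k$ is loose, hence, by the monotonicity remark preceding Theorem~\ref{THM:LOOSENESS}, so is $S_j$ for every $j\le k$ (the trees $S_0$ and $S_1$ being loose in any Markov space anyway). In particular $P_2=S_2$ is loose, and, again by that remark, every path $P_j$ with $j\ge 2$ is loose. Thus all stars with at most $k$ leaves and all paths are loose; I take these as the base of the induction (the case $k=1$, where $F$ is a single vertex or a single edge, being immediate).

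For the inductive step I would take a tree $F$ of maximum degree $k$ that is neither a star nor a path, and pass to $F'=F\setminus L(F)$, the tree obtained from $F$ by deleting all of its leaves; since $F$ is not a star, $F'$ has at least two vertices. Choosing a leaf $c$ of $F'$ with neighbour $u\in V(F')$, every $F$-neighbour of $c$ other than $u$ is a leaf of $F$, so the branch $F_1$ of $F$ attached at $u$ along the edge $uc$ is exactly the star centred at $c$, which has $\deg_F(c)\le k$ leaves and is therefore loose by the base case. The complement $F_2=F\setminus F_1$ is a tree of maximum degree at most $k$ on fewer vertices than $F$; as long as $F_2$ is not a single edge it is loose by the inductive hypothesis, and then Theorem~\ref{THM:LOOSENESS} yields that $F$ is loose. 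A short case check — essentially the double-star and ``broom'' configurations — shows that $c$ can always be chosen so that $F_2$ has at least three vertices, the sole exception being $F=P_3$, which is already handled as a path.

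The main obstacle is precisely this last bookkeeping, and it is forced by the fact that looseness is \emph{not} inherited by subtrees: the single edge $P_1$ (with both endpoints counted as leaves) need not be loose — it is loose exactly when $\eta$ is induced by a graphon. Hence the recursion must never bottom out at $P_1$, which is why the base of the induction has to comprise \emph{all} stars $S_j$ with $j\le k$ and \emph{all} longer paths, not just single vertices, and why the branch to be peeled off must be chosen carefully — always a pendant star at a leaf of $F'$, and always so that what remains is again a star, a path, or a strictly smaller tree, never the bad edge $P_1$. Everything else is a routine induction layered on top of Theorem~\ref{THM:LOOSENESS}.
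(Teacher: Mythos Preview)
Your proof is correct and is precisely the natural elaboration of what the paper leaves implicit (the corollary is stated without proof, as a direct consequence of Theorem~\ref{THM:LOOSENESS}). Your care in taking both stars $S_j$ ($j\le k$) and paths $P_j$ ($j\ge 2$) as base cases, and in choosing the leaf $c$ of $F'$ so that the remainder $F_2$ never degenerates to the non-loose edge $P_1$, is exactly the bookkeeping required; the case analysis (the only obstruction is the double-star with one pendant leaf on one side, where one switches to the other leaf of $F'$, the sole unresolvable instance being $P_3$, already a path) is correct.
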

\begin{proof} The proof is by induction on the size of $F$.
As previously mentioned, $P_\ell$ is $2$-loose for all $\ell\geq2$. Also, note that any tree $F$ with maximum degree between 2 and $k$ is either a star (and thus loose by definition), a path of length $\geq 2$, or we can split off a branch $F_1$ such that both it and the remainder $F_2=F\setminus F_1$ have at least 3 vertices, in which case we are done by induction and Theorem \ref{THM:LOOSENESS}.
\end{proof}}
\medskip

\noindent{\bf Looseness in bi-Markov spaces.} We don't define looseness of a general
tree for bi-Markov spaces, we define $k$-looseness only. Let $\Mb=(I,J,\AA,\BB,\eta)$
be a bi-Markov space, and let \lljav{$(P_x:~x\in I)$} be the disintegration of $\eta$
defined in \eqref{EQ:MARKOV-DISINT2}. Select a point $u\in I$ from $\pi_I$, and
select $k$ independent points $x_1,\dots,x_k\in J$ from the distribution $P_u$.
We say that $\Mb$ is {\it $k$-loose \lljav{from $I$}}, if the joint
distribution $\sigma_{I,k}$ of $(x_1,\dots,x_k)$ is absolutely continuous with
respect to $\pi_J^k$. If this is the case, we can define the Radon--Nikodym
derivative
\begin{equation}\label{EQ:SK-DEF2}
s_{k,I}(x_1,\dots,x_k) = \frac{d\sigma_{I,k}}{d\pi_J^k}(x_1,\dots,x_k).
\end{equation}
We define $k$-looseness \lljav{from $J$} analogously. \lljav{We also define
{\it $(k,p)$-looseness from $I$} and {\it from $J$} analogously.} (Note that
$k$-looseness from $I$ does not imply $k$-looseness from $J$ in general.)

\section{Random mapping by tree decomposition}\label{SEC:RAND-MAP}

\subsection{Sequential tree decomposition}\label{SSEC:TREE-DEC}

A {\it sequential tree decomposition}\footnote{Not to be confused with ``tree
decomposition'' in the theory of graph minors.} of a graph $G$ is a sequence
$(F_1,\dots,F_m)$ of edge-disjoint trees, so that $G=\bigcup_i F_i$, and
$V(F_i)\cap V(F_1\cup\dots\cup F_{i-1})=Z_i$ is the set of leaves of $F_i$, for
$i=1,\dots,m$. In particular, $F_1$ is a singleton tree.

Let us list some special constructions of sequential tree decompositions.

\medskip

{\bf Edge decomposition.} A trivial construction is to start with singleton
trees for each node, and continue with attaching $P_1$'s to get the edges.

\medskip

{\bf Star decomposition.} A less trivial decomposition is the following. Let
$V=(v_1,\dots,v_n)$ be any ordering of $V$. For each node $i$, we construct the
star $F_i$ centered at $i$, with edges connecting $i$ to earlier nodes. This
decomposition will be particularly well-behaved if $G$ is bipartite, and the
ordering starts with singleton trees for the nodes in one bipartition class,
and continues with the full stars of the nodes in the other class.

\medskip

{\bf Subdivision decomposition.} Another useful example is obtained when $G$ is
a subdivision of a graph $H$ with any number of new nodes on each edge. The
sequence starts with the nodes in $U=V(H)$ as singleton trees, and then it
continues with the paths replacing the original edges (in any order).

\medskip

{\bf Double star decomposition.} Select an edge $ij$ in a bipartite graph $G$;
then $ij$ and the edges adjacent to it form a tree $F_{ij}$ (a double star).
The graph $G$ arises from $G'=G\setminus\{i,j\}$ by attaching the tree
$F_{ij}$. Continuing this with $G'$ instead of $G$, we get a sequential
tree-decomposition of $G$ (in backwards order).

\medskip

{\bf \lljav{Open} ear decomposition.} An ear decomposition \lljav{into paths}
is a further example (this will not concern us here).

\subsection{Sequential construction of measures}\label{SSEC:MAIN-CONSTRUCT}

Let $\Mb=(J,\BB,\eta)$ be a Markov space, and let $\FF=(F_1,\dots,F_m)$ be a
sequential tree decomposition of the graph $G$. We construct a random mapping
$x:~V\to J$ as follows. We select $x(F_1)$ from distribution $\pi$. Assuming
that the nodes in $F_1\cup\dots\cup F_{i-1}$ have been mapped $(i\le m)$, we
choose the image of $M(F_i)$ from the conditional distribution
$\theta^{F_i}_{x(L(F_i))}$ (defined in Section \ref{SSEC:MARGINALS}). The
distribution of this random map will be denoted by $\Rho_\FF$.

There are two major problems with this construction:

\medskip

--- First, the disintegration $\theta^{F_i}_z$ is determined only up to a set of
$\sigma^{F_i}$-measure zero, and there is no guarantee that the construction will not
produce an image of $L(F_i)$ that falls in a zero-set of $\sigma^{F_i}$ with
positive probability. As a trivial example, an edge decomposition has this
problem if $\eta$ is not absolutely continuous with respect to $\pi\times\pi$.

\medskip

--- Second, even if this does not happen, the distribution we construct may
depend on the specific decomposition into trees. This problem actually occurs
even in the case of the star decomposition of bipartite graphs; see Example
\ref{EXA:KAB-ORT}. One of our main results (Theorem \ref{THM:MAIN-KAB}) says
that in a sense these are the only bad examples.

\medskip

Both problems can be handled by making an appropriate looseness assumption
about $\eta$ and sparseness assumption about $G$. To describe these remedies,
suppose that a graph $G=(V,E)$ has a sequential tree decomposition
$\FF=(F_1,\dots,F_m)$ such that every tree $F_i$ is loose in $\Mb$. Set
$L_i=L(F_i)$ and $M_i=M(F_i)$. Define the functions $s^{F_i}$ by
\eqref{EQ:SF-DEF} and let
\begin{equation}\label{EQ:F-SI}
f_\FF(x) = \prod_{i=1}^m s^{F_i}(x_{L_i}).
\end{equation}
Let $\Rho=\Rho_\FF$ be the distribution on $\BB^V$ constructed above, and
define the measure
\begin{equation}\label{EQ:ETAG-DEF}
\eta_\FF = f_\FF\cdot\Rho_\FF.
\end{equation}
It is clear from this definition that $\eta_\FF$ is sigma-finite.

It will be useful to express this definition in a recursive way. The sequence
$\FF'=(F_1,\dots,F_{m-1})$ is a sequential tree decomposition of the graph
$G'=(V',E')=F_1\cup\dots\cup F_{m-1}$. We use the measurable family
$\Psi^{F_m}=(\psi_z^{F_m}:~z\in J^{L_m})$ defined in \eqref{EQ:PSI-DEF}. With
some abuse of notation, sometimes it is useful to consider $\Psi_m$ as indexed
by vectors $z\in J^{V'}$ (nodes in $V'\setminus L_m$ considered as dummies).
Then by definition
\begin{equation}\label{EQ:PIK-PSIK}
\pi^{L_m}[\Psi^{F_m}]=\eta^{F_m},
\end{equation}
and it is easy to check that
\begin{equation}\label{EQ:RECURS0}
\eta_\FF= \eta_{\FF'}[\Psi^{F_m}].
\end{equation}
We can use \eqref{EQ:PIK-PSIK} and \eqref{EQ:RECURS0} as a recursive definition
of $\eta_\FF$. We also define the ``density of $G$ in $\eta$'' as
\begin{equation}\label{EQ:TGETA}
t_\FF(G,\eta) = \eta_\FF(J^V)=\intl_{J^V} \prod_{i=1}^k s^{F_i}(x_{L_i})\,d\pi_J(x).
\end{equation}
Let us note that \eqref{EQ:RECURS0} implies that
\begin{equation}\label{EQ:FF-ABS-CONT}
(\eta_\FF)^{V'}\ll \eta_{\FF'}.
\end{equation}

To address the first problem described above, let us note the following. Assume
that $\eta_{\FF'}$ is already given. Note that the measures $\psi^{F_m}_z$ are
determined by \eqref{EQ:PIK-PSIK} up to a set of indices $z\in J^{L_m}$ of
$\pi^{L_m}$-measure $0$. If a set $Z\subset J^{L_m}$ satisfies $\pi^{L_m}(Z)=0$
but $(\eta_{\FF'})(Z\times J^{V'\setminus L_m})=(\eta_{\FF'})^{L_m}(Z)>0$, then
changing $\Psi^{F_m}$ for these indices $z\in Z$ will change the right hand
side of \eqref{EQ:RECURS0}, and we are in trouble. So for the recursive
construction to work, we need that $(\eta_{\FF'})^{L_m}\ll\pi^{L_m}$.

\begin{definition}\label{DEF:SMOOTH}
Let us say that the sequential tree decomposition $\FF=(F_1,\dots,F_m)$ is {\it
smooth} in $\Mb$, if $(\eta_{(F_1,\dots,F_{i-1})})^{L_i}\ll\pi^{L_i}$ for
$i=1,\dots,m$.
\end{definition}

We are going to show that star-decompositions are smooth in many triangle-free
graphs, and all tree decompositions are smooth in graphons.

Our main special case will be star-decompositions. Let $G$ be a graph with
maximum degree at most $k$. Let $\FF_p=(F_1,\dots, F_n)$ be a sequential star
decomposition of $G$, determined by an ordering $p=(v_1,\dots,v_n)$ of the
nodes, where $v_i$ is the center of $F_i$. We set $\eta_p=\eta_{\FF_p}$.

\subsection{Unbounded graphons}\label{SEC:UNBOUNDED}

Our first application of the general scheme described above is the case of
Markov spaces $\Mb=(J,\mathcal{B},\eta)$ with the property that $\eta$ is
absolutely continuous with respect to $\pi\times\pi$. It is convenient to
represent such Markov spaces by the Radom-Nikodym derivative
$W=d\eta/d(\pi\times\pi)$, which is a non-negative, symmetric measurable
function $W:~J\times  J\to\mathbb{R}$ with the property that $\int_x
W(x,y)d\pi=1$ holds for every $y\in J$. In particular we have that the $L^1$
norm of $W$ is $1$. We call measurable functions with this property {\it
$1$-regular graphons}. Note that every $1$-regular graphon $W$ uniquely
determines a Markov space $\Mb_W=(J,\mathcal{B},\eta_W)$ where
\begin{equation}\label{EQ:ETA-W-DEF}
\eta_W=W\cdot\pi^2
\end{equation}
In the rest of this section we are going to omit the subscript $W$ wherever no
confusion can arise.

The $1$-regularity of the graphon implies that the transition probabilities for
this Markov space are given by
\begin{equation}\label{EQ:W-KERNEL}
P_x(A) = \intl_A W(x,y)\,d\pi(y).
\end{equation}

\begin{lemma}\label{LEM:UNBOUND-LOOSE}
Every tree is loose in $\eta$. In other words, the measure $\eta$ is $k$-loose
for every natural number $k$.
\end{lemma}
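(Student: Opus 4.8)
The plan is to show that for a Markov space $\eta = W\cdot\pi^2$ given by a $1$-regular graphon $W$, the leaf-marginal $\sigma^F = (\eta^F)^{L(F)}$ is absolutely continuous with respect to $\pi^{L(F)}$ for every tree $F$, with an explicit Radon--Nikodym derivative. First I would observe that $\eta^F$ itself is absolutely continuous with respect to $\pi^{V(F)}$ with density $W^F(x) = \prod_{ij\in E(F)} W(x_i,x_j)$; this follows by induction on $|V(F)|$ using the recursive definition \eqref{EQ:TAU-DEF} together with the transition-kernel formula \eqref{EQ:W-KERNEL}, since attaching a leaf $u$ to $v$ multiplies the density by $W(x_u,x_v)$ and integrates out nothing. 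Concretely, if $F' = F\setminus u$ and $\eta^{F'} = W^{F'}\cdot\pi^{V'}$, then \eqref{EQ:TAU-DEF} and \eqref{EQ:W-KERNEL} give $\eta^F(W\times A) = \int_W \int_A W(x_v,x_u)\,d\pi(x_u)\,d\eta^{F'}(x) = \int_{W\times A} W^{F'}(x)W(x_v,x_u)\,d\pi^V$, which is exactly $W^F\cdot\pi^V$.

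Next I would compute the leaf-marginal by integrating out the interior nodes. Writing $L = L(F)$ and $M = M(F)$, we get
\[
\sigma^F(B) = \int_{J^L}\one_B(z)\Big(\int_{J^M} W^F(z,m)\,d\pi^M(m)\Big)d\pi^L(z),
\]
so $\sigma^F \ll \pi^L$ with $s^F(z) = \int_{J^M} W^F(z,m)\,d\pi^M(m)$, which is finite $\pi^L$-almost everywhere by \eqref{EQ:S-INT} (or directly by $1$-regularity, since each interior-node integration contributes a bounded-in-$L^1$ factor). This establishes that every tree is loose in $\eta$.

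For the second sentence of the statement, I would simply recall that $k$-looseness of $\eta$ is by definition looseness of the star $S_k$, as noted in the paragraph following Corollary \ref{COR:LOOSENESS}; since $S_k$ is a tree, the first part applies and gives $k$-looseness for every $k$. Alternatively, one can invoke Corollary \ref{COR:LOOSENESS} the other way: the first part shows all trees are loose, in particular $S_k$, which is precisely $k$-looseness. As a sanity check, the density $s_k(x_1,\dots,x_k) = \prod_{i=1}^k W(x_0, x_i)$ integrated over $x_0$, i.e. $\int_J \prod_{i=1}^k W(x_0,x_i)\,d\pi(x_0)$, matches the general formula.

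I do not expect a genuine obstacle here: the whole proof is an induction establishing $\eta^F = W^F\cdot\pi^V$ followed by Fubini to extract the leaf-marginal. The only mild care needed is the bookkeeping that interior-node integrations stay finite and that one may apply Fubini--Tonelli to the nonnegative integrand $W^F$, which is automatic. The mildest subtlety is that the paper allows Radon--Nikodym derivatives to take the value $+\infty$ on null sets (see Section \ref{SEC:DISINT}), so "absolutely continuous" here is the strong statement that $s^F$ is an actual $[0,\infty]$-valued density; \eqref{EQ:S-INT} then guarantees it is finite almost everywhere, which is all that looseness requires.
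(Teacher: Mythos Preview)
Your proposal is correct and follows essentially the same approach as the paper: establish $\eta^F = W^F\cdot\pi^{V(F)}$ by induction using \eqref{EQ:TAU-DEF} and \eqref{EQ:W-KERNEL}, then deduce $\sigma^F\ll\pi^{L(F)}$ by taking marginals. The paper's proof is terser (it simply asserts the identity is ``easily checked'' and notes that absolute continuity passes to marginals), while you additionally write out the explicit density $s^F(z)=\int_{J^M}W^F(z,m)\,d\pi^M(m)$, but the substance is the same.
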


\begin{proof}
The identity $\eta^{F} = W^F\cdot\pi^{V(F)}$ is easily checked for trees
$F=(V,E)$, using \eqref{EQ:W-KERNEL}. This implies that $\eta^F\ll\pi^V$, and
hence
\[
\sigma^F=(\eta^F)^{L(F)}\ll(\pi^V)^{L(F)}=\pi^{L(F)}.
\]
Thus $F$ is loose in $\eta$, proving the lemma.
\end{proof}

A convenient special property of such Markov spaces comes from the fact that
the function $W$ can be directly used to produce homomorphism measures for
every finite graph $G$:

\begin{theorem}\label{THM:MU-ETA}
Let $W$ be a $1$-regular graphon, and let $\FF=(F_1,\dots,F_m)$ be a sequential
tree-decomposition of a graph $G=(V,E)$. Then $\FF$ is smooth in $\eta$, and
\begin{equation}\label{EQ:ETAG-DEF-W}
\eta_\FF = W^G\cdot\pi^V.
\end{equation}
\end{theorem}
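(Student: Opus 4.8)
The plan is to proceed by induction on the length $m$ of the sequential tree-decomposition $\FF=(F_1,\dots,F_m)$, using the recursive formula \eqref{EQ:RECURS0} together with Lemma \ref{LEM:UNBOUND-LOOSE}, which guarantees that every tree (hence each $F_i$) is loose in $\eta$, so that the measures $\psi_z^{F_i}$ and the functions $s^{F_i}$ are all defined. The base case $m=1$ is immediate: $F_1$ is a singleton tree, $\eta_\FF=\pi$, and $W^{G}\cdot\pi^V=W^{K_1}\cdot\pi=\pi$ since the empty edge-product is $1$; smoothness is vacuous. For the inductive step, write $\FF'=(F_1,\dots,F_{m-1})$, $G'=F_1\cup\dots\cup F_{m-1}$, $L_m=L(F_m)$, $M_m=M(F_m)$. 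By the inductive hypothesis $\eta_{\FF'}=W^{G'}\cdot\pi^{V'}$, which is absolutely continuous with respect to $\pi^{V'}$; its marginal on $J^{L_m}$ is therefore absolutely continuous with respect to $\pi^{L_m}$, which is exactly the smoothness condition $(\eta_{\FF'})^{L_m}\ll\pi^{L_m}$ needed at step $m$. This disposes of the ``first problem'' and of smoothness.

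The heart of the argument is to compute $\eta_\FF=\eta_{\FF'}[\Psi^{F_m}]$ and identify it with $W^G\cdot\pi^V$. First I would pin down $\Psi^{F_m}$ explicitly for a graphon: from $\eta^{F_m}=W^{F_m}\cdot\pi^{V(F_m)}$ (the identity already extracted in the proof of Lemma \ref{LEM:UNBOUND-LOOSE}) and the defining relation $\pi^{L_m}[\Psi^{F_m}]=\eta^{F_m}$, one sees that $\psi^{F_m}_z$ is the measure on $J^{M_m}$ with density $w\mapsto W^{F_m}(z,w)=\prod_{jk\in E(F_m)}W(\cdot,\cdot)$ with respect to $\pi^{M_m}$ (here $z\in J^{L_m}$ is fixed and $w$ ranges over $J^{M_m}$); this is the version with the ``$s^{F_m}$ factor already folded in'', consistent with \eqref{EQ:PS-J-SK}. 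Then, for a box $A\times B$ with $A\in\BB^{M_m}$ and $B\in\BB^{V'}$, I would unwind the disintegration formula \eqref{EQ:DISINT-D}:
\[
\eta_\FF(A\times B)=\intl_B \psi^{F_m}_{x(L_m)}(A)\,d\eta_{\FF'}(x)
=\intl_B\intl_A W^{F_m}(x(L_m),w)\,d\pi^{M_m}(w)\;W^{G'}(x)\,d\pi^{V'}(x).
\]
By Fubini and the fact that the node sets $M_m$ and $V'$ are disjoint with $V=V'\cup M_m$, the integrand multiplies out to $W^{G'}(x)\,W^{F_m}(x(L_m),w)$, and since $E(G)=E(G')\sqcup E(F_m)$ (the trees are edge-disjoint) this product is exactly $W^{G}$ evaluated on the combined point. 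Hence $\eta_\FF(A\times B)=\int_{A\times B}W^G\,d\pi^V$ on all boxes, and since boxes generate the product $\sigma$-algebra and both sides are $\sigma$-finite, $\eta_\FF=W^G\cdot\pi^V$, completing the induction.

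The step I expect to require the most care is the explicit identification of $\Psi^{F_m}$: one must check that the measurable family $z\mapsto W^{F_m}(z,\cdot)\cdot\pi^{M_m}$ really is \emph{the} disintegration of $\eta^{F_m}$ with respect to $\pi^{L_m}$ (uniqueness up to a $\pi^{L_m}$-null set, Proposition \ref{PROP:DISINT}), and then that feeding $\eta_{\FF'}$ rather than $\pi^{L_m}$ into this family — legitimate precisely because $(\eta_{\FF'})^{L_m}\ll\pi^{L_m}$ — does not run into the indeterminacy on null sets flagged before Definition \ref{DEF:SMOOTH}. Everything else is bookkeeping with disjoint vertex sets, edge-disjointness of the $F_i$, and Fubini.
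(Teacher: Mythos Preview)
Your proposal is correct and follows essentially the same approach as the paper's own proof: induction on the length of the decomposition, with the explicit identification $\psi^{F_m}_z = W^{F_m}(z,\cdot)\cdot\pi^{M_m}$ (the paper's \eqref{EQ:PSI-W-F}), smoothness deduced from $\eta_{\FF'}=W^{G'}\cdot\pi^{V'}\ll\pi^{V'}$, and verification of $\eta_\FF=W^G\cdot\pi^V$ on boxes via \eqref{EQ:RECURS0}. You are somewhat more explicit than the paper about why the null-set indeterminacy in $\Psi^{F_m}$ causes no trouble, but otherwise the arguments coincide.
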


In particular, it follows that $\eta_\FF$ is independent of the decomposition
and $\eta^G = W^G\cdot\pi^V$ is well-defined.

\begin{proof}
We express the measures in the construction of $\eta_\FF$ as integrals of $W$.
First, let $F=(V,E)$ be a tree. It is easy to see that, by the definition of
$\eta^F$ and by \eqref{EQ:W-KERNEL}, that
\begin{equation}\label{EQ:ETA-W-F}
\eta^F = W^F\cdot\pi^V.
\end{equation}
This implies that for $A\in\BB^L$,
\begin{equation}\label{EQ:SIGMA-W-F}
\sigma^F(A) = \eta^F(A\times J^M) = \intl_{A\times J^M}W^F\,d\pi^V
\end{equation}
and for $B\in\BB^M$ and $z\in J^L$,
\begin{equation}\label{EQ:PSI-W-F}
\psi_z^F(B)=\intl_{B} W^F(z,y)\,d\pi^M(y).
\end{equation}

Now let $\FF=(F_1,\dots,F_m)$ be a sequential tree-decomposition of a graph
$G=(V,E)$. We are going to prove by induction on $m$ that \lljav{this
decomposition satisfies \eqref{EQ:ETAG-DEF-W}. This will imply that the
decomposition is smooth.}

Let $\FF'=(F_1,\dots,F_{m-1})$ and $G'=(V',E')=F_1\cup\dots\cup F_{m-1}$. To
prove that $G$ satisfies \eqref{EQ:ETAG-DEF-W}, we use the recurrence
\eqref{EQ:RECURS0}, along with \eqref{EQ:ETAG-DEF-W} for $G'$ and
\eqref{EQ:PSI-W-F}. Let $A\in \BB^{V'}$ and $B\in\BB^{M_m}$, then
\begin{align*}
\eta_\FF(A\times B) =& \intl_A \left(\intl_{B} W^{F_m}(z_{L_m},y)\,d\pi^{M_m}(y)\,W^{G'}(z)\right)\,d\pi^{V'}(z,w)\\
=& \intl_{A\times B} W^G\,d\pi^V
\end{align*}
(here $w$ is the vector of dummy variables in $J^{V'\setminus L_m}$). This
proves \eqref{EQ:ETAG-DEF-W}.

\lljav{To prove that $\FF$ is smooth, it suffices to note that
\eqref{EQ:ETAG-DEF-W} implies that $\eta_{\FF'} \ll \pi^{V'}$, and hence
$(\eta_{\FF'})^{L(F_m)}\ll\pi^{L(F_m)}$. This holds for all other prefixes of
$\FF$ by the same argument.}
\end{proof}

A direct application of Theorem \ref{THM:MU-ETA} implies that the formalism of
this paper is a consistent extension of earlier results in bounded graphon
theory.

\begin{corollary}\label{COR:MU-ETA}
Let $W$ be a $1$-regular graphon and let $G=(V,E)$ be a finite graph. Then
\[
t(G,\eta)=t(G,W)=\eta^G(J^V)=\intl_{J^V}W^G~d\pi^V.
\]
\end{corollary}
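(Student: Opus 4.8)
The plan is to read the corollary straight off Theorem \ref{THM:MU-ETA}. First I would fix any sequential tree-decomposition $\FF$ of $G$ — the edge decomposition of Section \ref{SSEC:TREE-DEC} will do — and apply Theorem \ref{THM:MU-ETA}: $\FF$ is smooth in $\eta$ and $\eta_\FF = W^G\cdot\pi^V$. Since this expression is independent of $\FF$, the measure $\eta^G$ is well-defined and equals $W^G\cdot\pi^V$. Taking total mass, $\eta^G(J^V) = (W^G\cdot\pi^V)(J^V) = \intl_{J^V} W^G\,d\pi^V$, which by \eqref{EQ:T-DEF}--\eqref{EQ:WG-DEF} is exactly $t(G,W)$.

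Next I would confirm that $G$ is well-measured in $\eta$, so that the symbol $t(G,\eta)$ is legitimate and equals $\eta^G(J^V)$ by definition. Running the same argument for every induced subgraph gives $\eta^{G[S]} = W^{G[S]}\cdot\pi^S$ for all $S\subseteq V$. This family is \emph{normalized}: $\eta^{K_1} = W^{K_1}\cdot\pi = \pi$ since the empty product is $1$, and $\eta^{K_2} = W\cdot\pi^2 = \eta$ by \eqref{EQ:ETA-W-DEF}. It is \emph{decreasing}: for $S\subseteq T$ the marginal $(\eta^{G[T]})^S$ has $\pi^S$-density $x\mapsto\intl_{J^{T\setminus S}} W^{G[T]}(x,y)\,d\pi^{T\setminus S}(y)$, which vanishes whenever $W^{G[S]}(x)=0$ because $W^{G[T]}(x,\cdot)$ does, hence $(\eta^{G[T]})^S\ll W^{G[S]}\cdot\pi^S=\eta^{G[S]}$. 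It is \emph{Markovian}: formula \eqref{EQ:RN-MT} identifies the disintegration as $\nu_{S,T,x} = \bigl(W^{G[T]}(x,\cdot)/W^{G[S]}(x)\bigr)\cdot\pi^{T\setminus S}$ for $\eta^{G[S]}$-almost all $x$, and the pointwise log-modularity identity \eqref{prodprop} then translates verbatim into \eqref{EQ:MARKOV}.

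Combining the two steps gives $t(G,\eta) = \eta^G(J^V) = \intl_{J^V} W^G\,d\pi^V = t(G,W)$. Finally, $1$-regularity of $W$ forces $t(K_2,W) = \intl_{J^2} W\,d\pi^2 = 1$, so the normalization in \eqref{EQ:TGH} is trivial and $t^*(G,W) = t(G,W)$ as well, completing every equality in the corollary.

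I do not anticipate a real obstacle here: this is a corollary in the strict sense, each equality being either a definition or a direct consequence of Theorem \ref{THM:MU-ETA}. The only step needing more than one line is the Markov property of the marginal family $W^{G[S]}\cdot\pi^S$, and that is precisely identity \eqref{prodprop}, already recorded in the introduction.
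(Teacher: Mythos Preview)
Your proposal is correct and follows the paper's own approach: the paper simply states that the corollary is ``a direct application of Theorem \ref{THM:MU-ETA}'', and you unpack exactly that implication. If anything, you do more than the paper, since your verification that the family $(W^{G[S]}\cdot\pi^S)$ is normalized, decreasing, and Markovian is not strictly needed for the corollary as stated---once Theorem \ref{THM:MU-ETA} gives $\eta^G=W^G\cdot\pi^V$, the chain $t(G,\eta):=\eta^G(J^V)=\intl_{J^V}W^G\,d\pi^V=t(G,W)$ is immediate from the definitions.
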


Note, however, that this value may be infinite (see Example
\ref{EXA:NON-COMPACT}).

If $W$ has stronger properties, then we can strengthen the $k$-looseness
property of graphons to $(k,p)$-looseness.

\begin{lemma}
Let $k,p$ be natural numbers. For $x\in J$ let $f(x)$ denote the $L^p$-norm of
the function $y\mapsto W(x,y)$. If $f\in L^k(\pi)$, that is,
$
\intl_J(
\intl_J W(x,y)^p\,d\pi(y)
)^{k/p}
\,d\pi(x)<\infty
$,
then $\eta$ is
$(k,p)$-loose.
\end{lemma}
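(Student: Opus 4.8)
The plan is to write the density $s_k$ explicitly using the graphon and then bound its $L^p(\pi^k)$-norm by Minkowski's integral inequality. First I would record the formula for $s_k$: applying the identity $\eta^F=W^F\cdot\pi^{V(F)}$ (equation \eqref{EQ:ETA-W-F}, established in the proof of Lemma \ref{LEM:UNBOUND-LOOSE}) to the star $F=S_k$, whose edge set is $\{\{0,i\}:~1\le i\le k\}$ with center $0$, we get $\eta^{S_k}=W^{S_k}\cdot\pi^{V(S_k)}$ where $W^{S_k}(x_0,x_1,\dots,x_k)=\prod_{i=1}^k W(x_0,x_i)$. Taking the marginal on the leaf set $L(S_k)=\{1,\dots,k\}$ shows (again) that $\sigma_k=\sigma^{S_k}\ll\pi^k$, with
\[
s_k(y_1,\dots,y_k)=\intl_J \prod_{i=1}^k W(x,y_i)\,d\pi(x).
\]
So it remains to check that this function lies in $L^p(\pi^k)$.

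I would then view $s_k$ as the $\pi$-average over $x\in J$ of the nonnegative functions $g_x:\,(y_1,\dots,y_k)\mapsto\prod_{i=1}^k W(x,y_i)$ on $J^k$, and apply Minkowski's integral inequality:
\[
\|s_k\|_{L^p(\pi^k)}=\Big\|\intl_J g_x\,d\pi(x)\Big\|_{L^p(\pi^k)}\le \intl_J \|g_x\|_{L^p(\pi^k)}\,d\pi(x).
\]
Since $g_x(y_1,\dots,y_k)^p=\prod_{i=1}^k W(x,y_i)^p$ factorizes over the coordinates, Tonelli's theorem gives
\[
\|g_x\|_{L^p(\pi^k)}^p=\intl_{J^k}\prod_{i=1}^k W(x,y_i)^p\,d\pi^k(y)=\Big(\intl_J W(x,y)^p\,d\pi(y)\Big)^k=f(x)^{pk},
\]
i.e. $\|g_x\|_{L^p(\pi^k)}=f(x)^k$. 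Combining the two displays,
\[
\|s_k\|_{L^p(\pi^k)}\le\intl_J f(x)^k\,d\pi(x)=\|f\|_{L^k(\pi)}^k<\infty,
\]
which is precisely $(k,p)$-looseness.

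There is no real obstacle here: the argument is essentially a two-line application of the integral Minkowski inequality. The only things to watch are routine measure-theoretic bookkeeping — measurability of $(x,y_1,\dots,y_k)\mapsto\prod_i W(x,y_i)$ and the legitimacy of interchanging integrals — but since this function is nonnegative, Tonelli's theorem applies without any a priori integrability assumption, and the hypothesis $f\in L^k(\pi)$ then justifies the finiteness in the final display. (The degenerate case $k=1$ is trivial anyway, since $s_1\equiv1$ by $1$-regularity of $W$.)
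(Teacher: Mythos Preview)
Your proof is correct and follows essentially the same approach as the paper: the paper defines $H_x(y_1,\dots,y_k)=\prod_i W(x,y_i)$ (your $g_x$), observes that $\|H_x\|_{L^p(\pi^k)}=f(x)^k$, and then invokes the ``convexity of the $L^p$-norm'' (i.e., Minkowski's integral inequality) to conclude that $\|s_k\|_{L^p}\le\int f(x)^k\,d\pi(x)<\infty$. Your write-up is slightly more explicit in identifying $s_k$ with $\int_J H_x\,d\pi(x)$ and in justifying the use of Tonelli, but the argument is the same.
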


\begin{proof}
For $x\in J$ let $H_x:J^k\to\mathbb{R}$ denote the function defined by
$H_x(y_1,y_2,\dots,y_k):=W(x,y_1)W(x,y_2)\dots W(x,y_k)$. It is easy to see
that the $L^p$-norm of $H_x$ on $(J^k,\pi^k)$ is equal to $f(x)^k$. Thus by the
convexity of $L^p$-norm we have that the $L^p$-norm of $H:=\int_J H_x~d\pi(x)$
is at most $\int_x f(x)^k~d\pi$ and so the condition of the lemma implies that
the $L^p$-norm of $H$ is finite. This implies that $\eta$ is $(k,p)$-loose.
\end{proof}

This lemma has two immediate corollaries.

\begin{corollary}\label{COR:PP-LOOSE}
Let $W$ be a $1$-regular $L^p$-graphon for some natural number $p>1$. Then
$\eta$ is $(p,p)$-loose.
\end{corollary}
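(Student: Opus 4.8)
The plan is to deduce this directly from the preceding Lemma, specialized to the choice $k=p$. So the only thing I would need to check is the hypothesis of that Lemma in the present case, namely that the function $f$, defined by $f(x)=\bigl(\intl_J W(x,y)^p\,d\pi(y)\bigr)^{1/p}$, belongs to $L^p(\pi)$ (and not merely, say, to $L^1(\pi)$).

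First I would note that $f$ is measurable: since $W\ge 0$ is measurable on $J^2$, Tonelli's theorem makes $x\mapsto\intl_J W(x,y)^p\,d\pi(y)$ measurable, hence so is $f$. Then I would compute the $L^p$-norm of $f$ by interchanging the order of integration, again justified by Tonelli using the nonnegativity of $W$:
\[
\intl_J f(x)^p\,d\pi(x) = \intl_J\!\intl_J W(x,y)^p\,d\pi(y)\,d\pi(x) = \intl_{J^2} W^p\,d\pi^2 = \|W\|_{L^p(\pi^2)}^p.
\]
Since $W$ is assumed to be an $L^p$-graphon, the right-hand side is finite, so $f\in L^p(\pi)$. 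Applying the Lemma with $k=p$ then yields that $\eta$ is $(p,p)$-loose, which is exactly the claim.

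There is no real obstacle here: the corollary is essentially the Lemma with the special choice $k=p$, and the only content is the elementary identity $\|f\|_{L^p(\pi)}^p=\|W\|_{L^p(\pi^2)}^p$, immediate from Fubini--Tonelli. The mildest point of care is checking that the order of integration may be swapped, which is unproblematic because the integrand is nonnegative and measurable.
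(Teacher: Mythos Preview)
Your proposal is correct and matches the paper's approach exactly: the paper states this as an immediate corollary of the preceding Lemma (with no further proof given), and you have supplied precisely the verification the Lemma requires, namely that $f\in L^k(\pi)=L^p(\pi)$ via the identity $\|f\|_{L^p(\pi)}^p=\|W\|_{L^p(\pi^2)}^p$ obtained from Tonelli.
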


\begin{corollary}
Let $W$ be a $1$-regular graphon such that for some $c\in\mathbb{R}$ we have
that $\int_y W(x,y)^p~d\pi\leq c$ holds for every $x$. Then $W$ is
$(k,p)$-loose for every natural number $k$.
\end{corollary}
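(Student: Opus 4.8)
The plan is to reduce this immediately to the preceding lemma. The key observation is that the hypothesis $\int_J W(x,y)^p\,d\pi(y)\le c$ for every $x\in J$ is exactly the statement that the function $f(x)=\bigl(\int_J W(x,y)^p\,d\pi(y)\bigr)^{1/p}$, i.e. the $L^p(\pi)$-norm of the function $y\mapsto W(x,y)$, satisfies $f(x)\le c^{1/p}$ for all $x$. Thus $f$ is a bounded measurable function on $J$ (measurability of $f$ follows from Fubini/Tonelli applied to the measurable function $W$).

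Next I would use that $\pi$ is a probability measure: a bounded measurable function automatically lies in $L^k(\pi)$ for every natural number $k$, since $\int_J f(x)^k\,d\pi(x)\le c^{k/p}<\infty$. In particular $f\in L^k(\pi)$, which is precisely the hypothesis of the previous lemma. Applying that lemma gives that $\eta$ (equivalently, $W$) is $(k,p)$-loose, and since $k$ was arbitrary this holds for every natural number $k$.

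There is no genuine obstacle here; the only point worth double-checking is that the function denoted $f$ in the statement of the previous lemma coincides with the pointwise $L^p$-norm appearing in the hypothesis of this corollary, which is immediate from the definitions.
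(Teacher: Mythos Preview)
Your argument is correct and is exactly the intended one: the paper states this as an immediate corollary of the preceding lemma, and your reduction (the hypothesis bounds $f$ by $c^{1/p}$, hence $f\in L^k(\pi)$ for every $k$ since $\pi$ is a probability measure) is precisely how that reduction goes.
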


\ignore{ ***\lljav{ The following example (for which we are grateful to the
anonymous referee of our paper) shows that the conclusion of Corollary
\ref{COR:PP-LOOSE} cannot be strengthened: There is an $L^p$-graphon that is
not $(p+1,p)$-loose.}

\begin{example}\label{EXA:PP0}
\lljav{Fix $p\ge 1$, and let $(J_n:~ n=1,2,\dots)$ be a measurable partition of
$J$ with $\pi(J_n) = 2^{-n}$. Define
\[
W(x,y)=
  \begin{cases}
    2^{2n/p}n^{-2/p}, & \text{if $x,y\in J_n$}, \\
    0, & \text{otherwise}.
  \end{cases}
\]
It is easy to see that $\|W\|_p^p = \sum_n n^{-2} < \infty$. On the other hand,
if $k > p$, then we have
\[
t(K_{k,p},W) = \sum_{n=1}^\infty 2^{-pn-kn}\Big(2^{2n/p}n^{-2/p}\Big)^{kp} = \sum_{n=1}^\infty 2^{-pn+kn}n^{-2k},
\]
which is infinite since $k > p$.}
\end{example}
}

Our next two examples show that $1$-regular graphons can be rather wild objects
in terms of spectral properties and subgraph densities.

\medskip

\begin{example}\label{EXA:NON-COMPACT}
Let $W:~[0,1]^2\to\mathbb{R}^2$ be the function whose value is defined by
$W(x,y)=2^k$ whenever $x,y\in I_k=(2^{-k},2^{-(k-1)})$, and $0$ otherwise. We
define $\pi$ as the Lebesgue measure on $[0,1]$. It is clear that for every
natural number $k\geq 1$, the \lljav{indicator} function of $I_k$ is an
eigenvector of $W$ with eigenvalue $1$. Thus the eigenspace of $W$ with
eigenvalue $1$ is infinite dimensional. This implies that $W$ is not a compact
operator. Direct calculation shows that if a connected graph $G=(V,E)$ is not a
tree, then $t(G,W)=\infty$. More precisely, since $G$ is connected, $W^G=0$
unless all nodes are mapped into the same interval $I_k$. Hence
\[
t(G,W)=\sum_{k=1}^\infty\intl_{I_k^V} W^G(x)\,dx =\sum_{k=1}^\infty
2^{k(|E(G)|-|V(G)|)},
\]
which is equal to $1$ if $|E(G)|=|V(G)|-1$ (i.e., $G$ is a tree) and $\infty$
otherwise.
\end{example}

\medskip
Note that in the preceding example the graphon is an $L^1$ function, whereas
any graphon in $L^2$ would at least have finite cycle densities (as the max
degree is 2). Changing the parameters we can obtain a family of examples in
$L^p$ ($1\leq p<2$) that get arbitrarily close to being Hilbert-Schmidt
kernels, yet still have infinite densities for all non-trees.

\begin{example}\label{EXA:PP}
Let $\varepsilon\in[0,1)$. Let $(a_n)_{n\in\Nbb}$ be a sequence of positive
reals such that $\sum_{n\in\Nbb} a_n=1$ and $\sum_{n\in\Nbb}
a_n^{1-\varepsilon}<\infty$. Let $(J_n:~ n=1,2,\dots)$ be a measurable
partition of $J$ with $\pi(J_n) = a_n$. Define the unbounded kernel $W$ by
\[
W(x,y)=
  \begin{cases}
    1/a_n, & \text{if $x,y\in J_n$}, \\
    0, & \text{otherwise}.
  \end{cases}
\]
It is easy to see that $W$ is 1-regular, and
\[
\|W\|_{1+\varepsilon}^{1+\varepsilon}=\sum_{n\in\mb{N}}
a_n^2/a_n^{1+\varepsilon}=\sum_{n\in\Nbb} a_n^{1-\varepsilon}<\infty.
\]
On the other hand, the density of any connected graph $G$ in $W$ can be
obtained as the sum of the densities in each of the diagonal blocks, i.e.,
\[
t(G,W)=\sum_{n\in\Nbb} a_n^{|V(G)|}/a_n^{|E(G)|}.
\]
The sum is equal to 1 for trees, and infinite for all other graphs $G$.
\end{example}

Although the above construction with an infinite number of independent blocks
seems to suggest that the key to infinite densities is non-compactness, this is
not quite the case. Indeed, the next example shows that compactness of the
operator defined by $W$ is by itself not enough to guarantee that subgraph
densities behave any better.

\begin{example}\label{EXA:GRAPHON}
Let $f:~I=[-1,1]\to\R$ be a function with the following properties: $f\ge 0$;
$f(-x)=f(x)$ for all $x\in I$; $\int_I f(x)\,dx=1$; $f$ is convex and monotone
decreasing for $x>0$. Define a graphon by
\[
W(x,y)=f(x-y)\qquad(x,y\in I),
\]
where $f$ is extended periodically modulo $2$. Clearly $W$ is symmetric and
$1$-regular.  As a kernel operator, $W$ is positive semidefinite and compact as
$L^2(\mu)\to L^2(\mu)$. In the special case
\[
f(x)=\frac1{|x|(2-\ln(|x|))^2},
\]
no operator power of $W$ has finite trace. So $t(C_n,W)=\infty$ for all $n$
(see Appendix \ref{APP:D} for details).
\end{example}

\subsection{Triangle-free graphs}\label{SSEC:TRIANGLE-FREE}

In this section we concentrate on sequential star decompositions. We need a
simple combinatorial lemma.

\begin{lemma}\label{LEM:REORDER}
Let $F:~S_V\to X$, where $S_V$ is the set of permutations of the node set of a
triangle-free graph $G=(V,E)$, and $X$ is any set. Assume that $F$ has the
following two invariance properties for every permutation $p=(v_1,\dots,v_n)$:

\smallskip

{\rm(i)} If $v_kv_{k+1}\notin E$, then interchanging $v_k$ and $v_{k+1}$ in $p$
does not change $F(p)$;

\smallskip

{\rm(ii)} If every node in $A=\{v_1,\dots,v_a\}$ is connected to every node in
$B=\{v_{a+1},\dots,v_{a+b}\}$, then interchanging the blocks $A$ and $B$ in $p$
does not change $F(p)$.

\smallskip

Then $F$ is constant.
\end{lemma}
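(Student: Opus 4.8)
The plan is to show that the group generated by the two kinds of transpositions described in (i) and (ii) acts transitively on $S_V$; then the two invariance properties force $F$ to be constant. Since (i) already lets us swap any two adjacent-in-the-permutation nodes that are \emph{non}-adjacent in $G$, the only thing standing in the way of full transitivity is the situation where $v_k v_{k+1} \in E(G)$, and we want to move $v_k$ past $v_{k+1}$. So the heart of the matter is: using (i) and (ii), can we always realize the transposition of two consecutive entries $v_k, v_{k+1}$ with $v_kv_{k+1}\in E$? If so, we generate all adjacent transpositions, hence all of $S_n$, and we are done.

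First I would reduce to the following local claim: if $p=(\dots, u, w, \dots)$ with $uw\in E$, then $F(\dots,u,w,\dots) = F(\dots,w,u,\dots)$. To prove this, consider the set $N$ of nodes appearing \emph{before} the block $\{u,w\}$ in $p$ that are neighbors of $u$, and likewise neighbors of $w$; here triangle-freeness enters decisively, because $u$ and $w$ cannot have a common neighbor, so these neighbor sets are disjoint. The idea is to use (i) to slide $u$ leftward past all non-neighbors until it sits immediately to the right of the block of its own earlier neighbors, and similarly arrange $w$; then one engineers a configuration where a block $A$ (containing $u$, or built around $u$) is completely joined to a block $B$ (containing $w$), apply (ii) to swap the blocks, and then use (i) again to restore everything except the relative order of $u$ and $w$. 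The triangle-free hypothesis guarantees that when we form the candidate blocks $A$ and $B$ for an application of (ii), no unwanted edges inside $A$ or inside $B$ force us to keep extra vertices attached.

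A cleaner way to organize the same idea, which I would actually pursue, is by induction on $n=|V|$ together with the position $k$ of the "defect". Fix a target permutation; by (i) we may assume WLOG that $F$ is constant on each equivalence class of the relation generated by non-edge adjacent transpositions, and it suffices to connect two such classes that differ by one edge-transposition. Take the vertex $v_1$ (the first entry): by repeated use of (i) we can move $v_1$ to \emph{any} position that is not blocked by a neighbor; if it \emph{is} blocked, say the first neighbor of $v_1$ in the list is $v_j$, then all of $v_2,\dots,v_{j-1}$ are non-neighbors of $v_1$, and by triangle-freeness no two of $v_1,v_j$ share further structure that obstructs forming the single-element block $A=\{v_1\}$ against $B=\{v_j\}$ — but a single-vertex-to-single-vertex swap by (ii) \emph{is} exactly the edge-transposition we need. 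So (ii) applied with $a=b=1$ on an edge, combined with (i), already suffices to generate all adjacent transpositions, hence $F$ is constant on all of $S_V$.

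The step I expect to be the main obstacle is making the "slide and swap" bookkeeping rigorous: one must verify that at the moment we invoke (ii), the two blocks $A$ and $B$ really are completely joined to each other and that nothing outside has been disturbed, and that triangle-freeness is genuinely what rules out the obstruction (e.g.\ if $G$ had a triangle $uwx$ with $x$ earlier in the list, sliding would trap $x$ and the block structure could fail, matching the fact that the lemma is false for $K_3$). I would handle this by a careful induction as above, always reducing to the primitive move "swap two consecutive entries forming an edge, via (ii) with $a=b=1$", which sidesteps the need to build large blocks at all.
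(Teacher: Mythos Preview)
Your final reduction is where the argument breaks. You claim that condition (ii) with $a=b=1$, together with (i), already generates every adjacent transposition and hence all of $S_V$. But (ii) only lets you swap the \emph{first two} positions of the permutation; it does not let you swap an arbitrary consecutive edge-pair at positions $k,k+1$. So the group you are actually generating is: ``swap positions $1,2$ freely; swap positions $k,k+1$ for $k\ge 2$ only when the entries are non-adjacent in $G$.'' This group need not be all of $S_V$.

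Concretely, take $G=C_4=K_{2,2}$ on $\{1,2,3,4\}$ with edges $12,23,34,41$. Starting from $p=(1,2,3,4)$, the only moves available are the swap at positions $1,2$ (always) and swaps at positions $2,3$ or $3,4$ when the entries form a non-edge (i.e., $\{1,3\}$ or $\{2,4\}$). A direct check shows the orbit of $(1,2,3,4)$ is exactly
\[
\{(1,2,3,4),\ (2,1,3,4),\ (2,3,1,4),\ (3,2,1,4)\},
\]
and in particular the vertex $4$ never leaves the last position. So the moves you allow do not act transitively, and one can define a non-constant $F$ invariant under them; the full strength of (ii) with larger blocks is genuinely needed. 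Your earlier, vaguer plan of ``building a block $A$ around $u$ completely joined to a block $B$ around $w$'' is closer to the truth, but you discard it in favor of the $a=b=1$ shortcut, which fails.

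The paper's proof avoids trying to generate $S_V$ altogether. It argues by induction on $|V|$: fixing the last entry $v$, the function $F(\cdot,v)$ on $S_{V\setminus\{v\}}$ satisfies the same hypotheses for $G\setminus v$, hence is constant by induction, so $F$ depends only on the last entry, say $F(p)=f(v_n)$. Then $f(u)=f(v)$ for non-adjacent $u,v$ via (i). For adjacent $u,v$, either $\overline{G}$ connects them (reduce to the non-adjacent case along a path), or $\overline{G}$ is disconnected with $u,v$ in different components---and then triangle-freeness forces $G$ to be complete bipartite, so one invokes (ii) once on the \emph{entire} bipartition. That single large block-swap is exactly what your $a=b=1$ approach cannot replace.
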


\lljav{Note that in (ii), $A$ and $B$ must be independent node sets as $G$ is
triangle-free, so $A\cup B$ induces a complete bipartite graph.}

\begin{proof}
We use induction on $n$. For a fixed $v\in V$, the function
$F_v(x_1,\dots,x_{n-1})=F(x_1,\dots,x_{n-1},v)$ satisfies the conditions in the
lemma, so by the induction hypothesis, it is constant. This means that there is
a function $f:~V\to X$ such that $F(x_1,\dots,x_n) = f(x_n)$.

Let $u,v\in V$ be nonadjacent. Considering any permutation
$(v_1,\dots,v_{n-2},u,v)$, we see that
\[
\lljav{f(v)=F(v_1,\dots,v_{n-2},u,v)=F(v_1,\dots,v_{n-2},v,u)=f(u).}
\]

Now let $u,v\in V$ be adjacent. If there is a path in the complement
$\overline{G}$ connecting $u$ and $v$, then applying the previous observation
repeatedly we get that $f(u)=f(v)$. If there is no such path, then there is a
partition $V=A\cup B$ so that $u\in A$, $v\in B$, and every edge between $A$
and $B$ is present. Since $G$ is triangle-free, it follows that $G$ is a
complete bipartite graph. Let $A=\{u_1,\dots,u_a=u\}$ and
$B=\{v_1,\dots,v_b=v\}$, then
\[
f(v)=F(u_1,\dots,u_a,v_1,\dots,v_b) = F(v_1,\dots,v_b,u_1,\dots,u_a) =f(u).
\]
So $f$ is constant, and then so is $F$.
\end{proof}

Let $G$ be a triangle-free graph with maximum degree $k$, and let
$\Mb=(J,\BB,\eta)$. For a sequential star decomposition $\FF=(F_1,\dots, F_n)$
of $G$, determined by an ordering $p=(v_1,\dots,v_n)$ of the nodes, let
$\eta_p=\eta_{\FF}$ denote the measure on $J^V$ defined by \eqref{EQ:RECURS0}.
In general, $\eta_p$ will depend on the ordering $p$ and also on the measure
families $\Psi^{F_i}=(\psi^{F_i}_z:~z\in J^{L(F_i)})$, which are determined
only up to a set of indices $z\in J^{L(F_i)}$ of $\pi^{L(F_i)}$-measure zero.

Now we are ready to prove Theorem \ref{THM:MAIN-KAB}.
\begin{thm*}
Let $G=(V,E)$ be a triangle-free graph, and let $\Mb=(J,\BB,\eta)$ be a Markov
space such that every complete bipartite subgraph $K_{a,b}$ of $G$ is
well-measured in $\Mb$. Then $G$ is well-measured in $\Mb$.
\end{thm*}
\begin{proof*}{Theorem \ref{THM:MAIN-KAB}}
We prove the theorem by induction on $n$. The condition is clearly inherited by
induced subgraphs of $G$, so we may assume that every proper induced subgraph
of $G$ is well-measured in $\Mb$.

First we prove that for every ordering $p=(v_1,\dots,v_n)$ of the nodes of $G$,
the measure $\eta_p$ does not depend on the choice of the measure families
$\Psi^{F_i}$. We know by induction that $G'=G\setminus v_n$ is well-measured in
$\Mb$, so $\eta^{G'}$ does not depend on these choices. Consider the measures
$\left(\psi_z: z\in J^{N(v_n)}\right)$. Two different choices of the measures $\psi_z$ can
differ on a set $Z_0\in\BB^{L_n}$ of maps $z$ with $\pi^{L_n}(Z_0)=0$. By the
definition of well-measurability, we have
$(\eta_{\FF'})^{L_n}\ll\eta^{G[L_n]}$, where $\FF'=(F_1,\ldots,F_{n-1})$. Since $G$ is triangle-free, $L_n$ is an
independent set of nodes, so $\eta^{G[L_n]}=\pi^{L_n}$ and hence $(\eta_{\FF'})^{L_n}\ll\pi^{L_n}$. Thus
$\eta_\FF$ is uniquely determined by \eqref{EQ:RECURS0}.

\smallskip

To prove that for any two orderings $p$ and $q$ of the nodes of $G$, we have
$\eta_p= \eta_q$, we use Lemma \ref{LEM:REORDER}. For a permutation $p\in S_V$,
let $F(p)=\eta_p$. Condition (i) is trivial, and condition (ii) is also easy:
if the first $a+b$ nodes induce a complete bipartite subgraph, then the
sequential construction up \lljav{to} the first $a+b$ nodes results in the same
measure by the hypothesis of the theorem, and the completion of the
construction does not depend on the order of these $a+b$ nodes.

\smallskip

So the sequential construction provides a measure $\eta^G$ independent of the
ordering. Recall that the measures $\eta^{G[S]}$, where $S\subset V$, are also
given by induction. This family of measures is trivially normalized and, as
remarked before, sigma-finite. The decreasing property is easy: we can start
the sequential construction by any given set $S$, and the
$(\eta^G)^S\ll\eta^{G[S]}$ follows by repeated application of
\eqref{EQ:FF-ABS-CONT}. To prove the Markov property, let $V=U\cup T$ such that
there is no edge between $U\setminus S$ and $T\setminus S$ where $S=U\cap T$.
Consider an ordering $p$ of $V$ starting with $S$. Recall \eqref{EQ:RECURS0},
describing the recursive definition of $\eta^G$. It follows that the
disintegration $(\mu_{U,V,z}:~z\in J^U)$ of $\eta^{G[V]}$ by $\eta^{G[U]}$ has
the property that $\mu_{U,V,z}$ depends only on $z|_S$, and we have a similar
property with $U$ and $T$ interchanged. Hence for every $x\in J^S$,
\[
\mu_{S,V,x}=\mu_{S,U,x}\times\mu_{S,T,x},
\]
proving that the measure family $(\eta^{G[S]}:~S\subseteq V)$ is Markovian.
\end{proof*}

\begin{remark}\label{REM:KAB}
Note that the proof above only uses that the sequential construction of
$\eta^{K_{a,b}}$ gives the same measure if we start with one bipartition class
or the other. It is not hard to see, along the lines of the proof of Lemma
\ref{LEM:REORDER}, that this is equivalent with $K_{a,b}$ being well-measured.
We will return to the question of which complete bipartite graphs are
well-measured in a Markov space in Section \ref{SSEC:PART-APPROX}.
\end{remark}

\begin{remark}\label{REM:A-K}
As we have mentioned in the Introduction, if $G$ has girth at least $5$, then
the only complete bipartite subgraphs of $G$ are stars, and the condition means
that $\Mb$ is $k$-loose, where $k$ is the maximum degree of $G$. Also note that
the condition on $G$ is inherited by all subgraphs of $G$.

The condition that all degrees are bounded by $k$ could be relaxed: the
construction would work for all graphs that are $k$-degenerate (i.e.,
repeatedly deleting nodes with degree at most $k$, the whole graph can be
eliminated). For $k=1$ (which imposes no condition on the Markov space), we get
the measure $\eta^F$ for all trees. (Recall, however, that this does not imply
that trees are well-measured: the decreasing property fails.) The extension of
the considerations in Section \ref{SEC:TREES} is left for further study.

An important example of this more general setup would be the following. There
are Markov spaces $\eta$ whose $k$-th power $\eta^k$ (as introduced along with
the adjacency operator) is induced by a bounded graphon $W$, but they
themselves are not. For example, the orthogonality space in any dimension has
this property. If $\eta$ has this property and $G'$ is a $k$-subdivision of a
graph $G$ then $G'$ is $2$-degenerate. Working with subdivision decompositions
of $G'$, we can construct $\eta^{G'}$, which will be finite. So we see that
$\eta^G$ exists and $t(G,\eta)<\infty$ holds for such Markov spaces and for a
large set of graphs $G$ with no degree bound.
\end{remark}

\bjav{\begin{remark}\label{REM:XKAB} Note that the bi-Markov space analogue of Theorem
\ref{THM:MAIN-KAB} also holds and the proof is essentially the same mutatis
mutandis.
\end{remark}}

\subsection{Bigraphs and bi-Markov spaces}

The sequential construction of $\eta_G$ takes a particularly simple form when
$G$ is bipartite. Let $G=(U,W,E)$ be a bigraph and $\Mb=(I,J,\AA,\BB,\eta)$, a
bi-Markov space \lljav{$k$-loose from $J$}. Our considerations apply, in particular, to
$k$-loose Markov spaces.

To define $\eta^G$, we can use an ordering of the nodes that starts with $U$.
Then the nodes in $U$ will be mapped onto independent random points of $I$ from
distribution $\pi_I$. Furthermore, the points of $W$ will be mapped conditionally independently
given the image of $U$. For this to make sense, it suffices to
require that all nodes in $W$ have degree at most $k$.

For every finite sequence $(x_1,\dots,x_d)$ of points $d\ge 1$ of $I$, we have
a measurable family of measures $\Psi_d=(\psi_x:~x\in I^d)$ on $\BB$ defined by
the disintegration
\begin{equation}\label{EQ:PSIX}
\eta^{S_d} = \pi_I^d[\Psi_d].
\end{equation}
We can think of $\psi_x$ informally as the measure on the common neighbors of
$x=(x_1,\dots,x_d)$.

For a node $w\in W$, let $F_w$ denote the star formed by the edges incident
with $w$. We define the product measure
and the corresponding measurable family by
\[
\wh\psi_x = \prod_{w\in W}\psi_{x_{N(w)}} \qquad\text{and}\qquad \wh\Psi=(\wh\psi_x:~x\in I^U).
\]
Then we define
\begin{equation}\label{EQ:PI-PSI}
\eta^G=\pi_I^U[\wh\Psi],
\end{equation}
or explicitly,
\begin{equation}\label{EQ:ETA-X}
\eta^G(A\times B) = \intl_A \wh\psi_{x_{N(w)}}(B)\,d\pi_I^U(x)\qquad(A\in \AA^U, B\in \BB^W).
\end{equation}
This measure $\eta^G$ is well-defined, since the measures $\psi_{x_{N(w)}}$ can be
changed on a $\pi_I$-nullset only. Note that the definition is more general than
our construction in Section \ref{SSEC:TRIANGLE-FREE}, since no assumption is
necessary for the degrees of nodes in $U$.

Formula \eqref{EQ:PI-PSI} makes sense when the disintegration $\Psi^{F_w}$ in \eqref{EQ:PSI-DEF} can
be defined. By Proposition \ref{PROP:DISINT}, this happens if
$\sigma^{F_w}\ll\pi_I^{N(w)}$, that is, $\eta$ is $k$-loose \lljav{from $J$}, and
all degrees of $G$ in $W$ are bounded by $k$, for some $k\ge 1$. If this holds,
then the density function $s_w=s_{w,J}=s^{F_w}$ is well-defined in
\eqref{EQ:SK-DEF}, and
\[
\psi_{x_{N(w)}}(J^W) = s_w(x_{N(w)}) = s_{\deg(w)}(x_{N(w)}).
\]
In particular, we obtain the following formula for the density of the bigraph
$G$ in $\Mb$:
\begin{align}\label{EQ:TG-ETA}
t(G,\eta) = \eta^G(I^U\times J^W) =\intl_{I^U} \prod_{w\in W} s_{\deg(w)}(x_{N(w)})\,d\pi_I^U(x).
\end{align}

Formula \eqref{EQ:PI-PSI} does not define $\eta^G$ if $G_0$ is not a bigraph
but only a bipartite graph (so its bipartition classes are not fixed). It may
even happen that only one of these measures is well-defined (\lljav{for
example,} if the maximum degree in $U$ is larger than $k$).

But assume that both of them are well-defined; is then $\eta^G=(\eta^*)^{G^*}$
or at least \bjav{$t(G^*,\eta^*)=t(G,\eta)$}? By Theorem \ref{THM:MU-ETA}, this is the
case when $\eta$ is defined by a graphon, and by \bjav{the bi-Markov space analogue of
Theorem \ref{THM:MAIN-KAB} (see Remark \ref{REM:XKAB})}, this also
holds true if $G$ contains no quadrilaterals. Further sufficient conditions
will be given below.  We'll state such a theorem (Theorem \ref{THM:X-DENSITY})
later. On the other hand, Example \ref{EXA:KAB-ORT} shows that some condition
along these lines is necessary.

One of the difficulties caused by this asymmetry can be partly remedied as
follows.

\begin{lemma}\label{LEM:W-MARGIN}
\bjav{Let $\Mb=(I,J,\AA,\BB,\eta)$ be a bi-Markov space $k$-loose from $J$ and let
$G=(U,W,E)$ be a bigraph such that every vertex of $W$ has degree at most $k$
in $G$. Then} the marginal of $\eta^G$ on $U$ is absolutely continuous with
respect to $\pi_I^U$. \bjav{Furthermore} the marginal of $\eta^G$ on any node of
$W$ is absolutely continuous with respect to $\pi_J$.
\end{lemma}

\begin{proof}
It is clear by \eqref{EQ:ETA-X} that if $\pi_I^U(A)=0$, then $\eta^G(A\times
J^W)=0$, which implies the first assertion. Similar claim does not follow for a
general $B\in\BB^W$ from $\pi_J^W(B)=0$ (see Example \ref{EXA:KAB-ORT});
however, if $B$ is a box $B=\prod_{w\in W} B_w$ $(B_w\in\BB)$, then by
\eqref{EQ:ETA-X} we have
\begin{equation}\label{EQ:ETA-X2}
\eta^G(A\times B) = \intl_A \prod_{w\in W} \psi_{x_{N(w)}}(B_w)\,d\pi_I^U(x).
\end{equation}
\bjav{The bi-Markov space analogue of Lemma \ref{LEM:PSI-NULL} and the fact that
$\Mb$ is $k$-loose from $J$} implies that if $\pi_J(B_w)=0$ for some $w\in W$, then
$\psi_{x_{N(w)}}(B_w)=0$ for $\pi_I$-almost all $x_{N(w)}$, and so $\eta^G(I^U\times
J^{W\setminus \{w\}}\times B_w)=0$.
\end{proof}

\section{Approximation by graphons}\label{SEC:APPROX}

\subsection{Convergence of graphons to Markov spaces}

Suppose that a sequence of graphons $W_n$ ``tends to'' a Markov space
$(J,\BB,\eta)$ in some sense. Does this imply that for graphs $G$ satisfying
suitable conditions, we have $t(G,W_n)\to t(G,\eta)$? We prove two results
along these lines. The first was used (implicitly) in \cite{KLSz2}; the second
will be used later in this paper.

Let $(J,\BB,\eta)$ be a $k$-loose Markov space, and let $W_n$ ($n=1,2,\dots$)
be a sequence of $1$-regular graphons on $(J,\BB,\pi)$. We say that $\eta$ is
the {\it $k$-limit} of the sequence $(W_n)$, if
\[
s^{W_n}_k(x) \to s^\eta_k(x)
\]
for $\pi^k$-almost all $x\in J^k$, and there is a constant $C = C(\eta,k)$
independent of $x$ and an integer $n_0\geq1$, such that
\[
s^{W_n}_k(x) \le C s^\eta_k(x)
\]
for every $n\geq n_0$ and $\pi^k$-almost all $x\in J^k$.

We say that a $(k,p)$-loose Markov space $(J,\BB,\eta)$ is the {\it
$(k,p)$-limit} of the sequence $(W_1,W_2,\dots)$ of graphons, if $s^{W_n}_k \to
s^\eta_k$ in $L^p(\pi^k)$ (note that there then exists a constant $C>0$ such that
$\|s^{W_n}_k\|_p\le C$ for every $n$).

We need an important analytic tool that allows us to bound products of
functions in multivariate $L^p$ spaces, namely a special case of the general,
multivariate version of HÃ¶lder's inequality, called Finner's theorem
(\cite[Theorem 2.1]{Fin}). For the sake of self-containedness, we state this
special case, and its main corollary that will be relevant to us.

\begin{theorem}\label{THM:FINNER}
Let $(J,\BB,\pi)$ be a probability space, and $p$, $n$ and $m$ positive
integers. Let $f_k:~J^n\to\mathbb{R}$ ($1\leq k\leq m$) be measurable
functions, where $f_k$ depends only a set $M_k$ of variables. Assume that every
variable $x_i$ ($1\leq i\leq m$) is contained in at most $p$ sets $M_k$. Then
\begin{equation}\label{eqn:Finner}
\intl_{J^n}\prod_{k=1}^m f_k \,d\pi^n\leq \prod_{k=1}^m \|f_k\|_p.
\end{equation}
\end{theorem}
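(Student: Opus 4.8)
The plan is to prove \eqref{eqn:Finner} by induction on the number of variables $n$, integrating out one variable at a time and controlling the (at most $p$) factors that involve it via the ordinary generalized Hölder inequality. First I would reduce to the case $f_k\ge 0$ for all $k$: since $\big|\prod_k f_k\big|=\prod_k|f_k|$ and $\||f_k|\|_p=\|f_k\|_p$, it is enough to bound $\int_{J^n}\prod_k|f_k|\,d\pi^n$; moreover I may assume $\|f_k\|_p<\infty$ for every $k$, since otherwise the right-hand side is infinite and there is nothing to prove.

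For $n=0$ the space is a single point, every $f_k$ is a constant, and both sides equal $\prod_k f_k$. For the inductive step, fix $n\ge 1$ and set $I=\{k:x_n\in M_k\}$, so that $|I|\le p$ by hypothesis. By Tonelli,
\[
\intl_{J^n}\prod_{k=1}^m f_k\,d\pi^n
=\intl_{J^{n-1}}\Big(\prod_{k\notin I}f_k\Big)\Big(\,\intl_J\prod_{k\in I}f_k\,d\pi(x_n)\Big)\,d\pi^{n-1}.
\]
To the inner integral I would apply generalized Hölder in the single variable $x_n$ with the $|I|\le p$ exponents all equal to $p$ (legitimate on the probability space $(J,\pi)$ because $|I|/p\le 1$), obtaining $\int_J\prod_{k\in I}f_k\,d\pi(x_n)\le\prod_{k\in I}g_k$, where $g_k(x_1,\dots,x_{n-1}):=\big(\int_J f_k^{\,p}\,d\pi(x_n)\big)^{1/p}$; note that $g_k$ depends only on the variables in $M_k\setminus\{x_n\}$.

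Substituting, $\int_{J^n}\prod_k f_k\,d\pi^n$ is dominated by an integral over $J^{n-1}$ of a product of $m$ functions, namely the $f_k$ with $k\notin I$ together with the $g_k$ with $k\in I$. The point that closes the induction is that this reduced system still satisfies the covering hypothesis with the same bound $p$: for each $i\le n-1$, the number of functions of the reduced system depending on $x_i$ is at most $|\{k:x_i\in M_k\}|\le p$, because replacing $M_k$ by $M_k\setminus\{x_n\}$ only removes $x_n$. The induction hypothesis on $J^{n-1}$ then bounds the expression by $\big(\prod_{k\notin I}\|f_k\|_{L^p(\pi^{n-1})}\big)\big(\prod_{k\in I}\|g_k\|_{L^p(\pi^{n-1})}\big)$. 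Finally, for $k\notin I$ the function $f_k$ does not involve $x_n$, so $\|f_k\|_{L^p(\pi^{n-1})}=\|f_k\|_{L^p(\pi^n)}$; and by Tonelli, $\|g_k\|_{L^p(\pi^{n-1})}^p=\int_{J^{n-1}}\!\int_J f_k^{\,p}\,d\pi(x_n)\,d\pi^{n-1}=\|f_k\|_{L^p(\pi^n)}^p$. Hence the bound collapses to $\prod_{k=1}^m\|f_k\|_p$, completing the induction.

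The argument is essentially routine; I expect the only steps requiring care to be the verification that the reduced family $\{f_k\}_{k\notin I}\cup\{g_k\}_{k\in I}$ on $J^{n-1}$ inherits the ``every variable lies in at most $p$ sets'' property (which is precisely what licenses the induction hypothesis), and the Tonelli bookkeeping identifying $\|g_k\|_{L^p(\pi^{n-1})}$ with $\|f_k\|_{L^p(\pi^n)}$. Alternatively, the statement is the all-exponents-equal-to-$p$ special case of Finner's inequality and can simply be cited from \cite[Theorem~2.1]{Fin}.
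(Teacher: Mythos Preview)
Your proof is correct; the paper does not prove this theorem at all but merely states it and cites Finner's original paper \cite[Theorem~2.1]{Fin}, exactly as you suggest in your final sentence. Your inductive argument (integrate out one variable, apply ordinary Hölder with $|I|\le p$ equal exponents on the probability space, then recurse) is the standard route to this special case and is essentially how Finner himself proves the general inequality.
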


By a standard telescopic decomposition argument, this yields the following
convergence result.

\begin{corollary}\label{cor:Finner}
Let $(J,\BB,\pi)$ be a probability space, and $p$, $n$ and $m$ positive
integers. Let $f_k, f_{k,\ell}:~J^{M_k}\to\mathbb{R}$ ($1\leq k\leq m$,
$\ell\in\mathbb{N}$) be measurable functions, where $f_k$ and $f_{k,\ell}$
depend only on a set $M_k$ of variables. Assume that every variable $x_i$
($1\leq i\leq m$) is contained in at most $p$ sets $M_k$. Also assume that $f_k\in L^p(J,\BB,\pi)$ and
\[
\lim_{\ell\to\infty}\|f_{k,\ell}-f_k\|_p= 0
\]
holds for all $1\leq k\leq m$ and $\ell\in\mathbb{N}$. Then
\begin{equation}\label{eqn:Finner2}
\lim_{\ell\to\infty}\intl_{J^n}\prod_{k=1}^m f_{k,\ell}\,d\pi^n=\intl_{J^n}\prod_{k=1}^m f_{k}\,d\pi^n.
\end{equation}
\end{corollary}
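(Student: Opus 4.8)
The plan is to reduce the statement to Theorem~\ref{THM:FINNER} by the familiar telescoping device. For each $r$ one has the pointwise identity on $J^n$
\[
\prod_{k=1}^m f_{k,r}-\prod_{k=1}^m f_k=\sum_{j=1}^m\Bigl(\prod_{k<j}f_{k,r}\Bigr)(f_{j,r}-f_j)\Bigl(\prod_{k>j}f_k\Bigr),
\]
so, integrating and using the triangle inequality, it suffices to prove that the integral of each of the $m$ summands on the right tends to $0$ as $r\to\infty$.

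Fix $j$. The absolute value of the $j$-th summand is the product of the $m$ nonnegative functions $|f_{1,r}|,\dots,|f_{j-1,r}|$, $|f_{j,r}-f_j|$, $|f_{j+1}|,\dots,|f_m|$, and the $k$-th of these depends only on the variables in $M_k$. Since the family of index sets is still $\{M_1,\dots,M_m\}$, the hypothesis that each variable lies in at most $p$ of them is unchanged, so Theorem~\ref{THM:FINNER} applies and gives
\[
\intl_{J^n}\Bigl(\prod_{k<j}|f_{k,r}|\Bigr)|f_{j,r}-f_j|\Bigl(\prod_{k>j}|f_k|\Bigr)\,d\pi^n\le\Bigl(\prod_{k<j}\|f_{k,r}\|_p\Bigr)\|f_{j,r}-f_j\|_p\Bigl(\prod_{k>j}\|f_k\|_p\Bigr).
\]
Now $\|f_{k,r}\|_p\le C$ by assumption, and $\|f_k\|_p\le C$ too, because $L^p$-convergence gives $\|f_k\|_p=\lim_r\|f_{k,r}\|_p\le C$; in particular each $f_k\in L^p(\pi^{M_k})$, so (again by Theorem~\ref{THM:FINNER} applied to the absolute values) all the integrals under discussion, including $\int_{J^n}\prod_k f_k\,d\pi^n$ and $\int_{J^n}\prod_k f_{k,r}\,d\pi^n$, are finite. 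Hence the $j$-th term is bounded by $C^{m-1}\|f_{j,r}-f_j\|_p$, which tends to $0$ by hypothesis.

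Summing over $j=1,\dots,m$ then yields $\int_{J^n}\bigl(\prod_k f_{k,r}-\prod_k f_k\bigr)\,d\pi^n\to 0$, which is precisely \eqref{eqn:Finner2}. The proof is essentially bookkeeping; the only place that needs a second of attention is checking that each telescoped summand still satisfies the covering condition of Finner's theorem --- which it does, since only the functions sitting on the fixed variable blocks $M_k$ have changed --- together with transferring the uniform bound $C$ from the approximants $f_{k,r}$ to the limit functions $f_k$.
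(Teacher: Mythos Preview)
Your proof is correct and follows exactly the approach the paper indicates: the paper simply remarks that the corollary follows from Theorem~\ref{THM:FINNER} ``by a standard telescopic decomposition argument,'' and you have spelled out that argument carefully. The only additional observation you made explicit---that $\|f_k\|_p\le C$ follows from $L^p$-convergence of the approximants---is a harmless bookkeeping point.
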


\begin{theorem}\label{THM:APP}
Let $(J,\BB,\eta)$ be a $k$-loose Markov space, let $W_n$ $(n=1,2,\dots)$ be a
sequence of $1$-regular graphons on $(J,\BB)$ such that $\eta$ is the $k$-limit
of $(W_n)$. Let $G=(U,W,E)$ be a bigraph in which $\deg(w)\le k$ for all $w\in
W$, and assume that $t(G,\eta)<\infty$. Then
\[
t(G,\eta)=\lim_{n\to \infty} t(G,W_n).
\]
\end{theorem}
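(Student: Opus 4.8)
The plan is to write both $t(G,\eta)$ and $t(G,W_n)$ in the ``star product'' form \eqref{EQ:TG-ETA} and then apply the dominated convergence theorem, the point being that the definition of $k$-limit, combined with the hypothesis $t(G,\eta)<\infty$, supplies exactly the integrable majorant that is needed. First I would recall that a $1$-regular graphon induces a Markov space that is $m$-loose for every $m$ (Lemma \ref{LEM:UNBOUND-LOOSE}), so the functions $s^{W_n}_d$ are defined and, by \eqref{EQ:W-KERNEL}, have the explicit form $s^{W_n}_d(y_1,\dots,y_d)=\int_J\prod_{i=1}^d W_n(x,y_i)\,d\pi(x)$. Applying \eqref{EQ:TG-ETA} both to $\eta$ and to the Markov space induced by $W_n$ (legitimate since $\deg(w)\le k$ for all $w\in W$), we obtain
\[
t(G,\eta)=\intl_{J^U}\prod_{w\in W}s^\eta_{\deg(w)}(x^w)\,d\pi^U(x),\qquad t(G,W_n)=\intl_{J^U}\prod_{w\in W}s^{W_n}_{\deg(w)}(x^w)\,d\pi^U(x),
\]
where $x^w$ denotes the restriction $x|_{N(w)}$. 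Thus it suffices to prove that the integrands converge $\pi^U$-a.e.\ and are dominated by a single $\pi^U$-integrable function.

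Second, I would transfer the two properties in the definition of $k$-limit from $S_k$ down to every star $S_d$ with $1\le d\le k$. Since $\sigma_d$ is the marginal of $\sigma_k$ on any $d$ of its $k$ coordinates, one has $s_d=\int_{J^{k-d}}s_k\,d\pi^{k-d}$, both for $\eta$ and for each $W_n$. Integrating the inequality $s^{W_n}_k\le C\,s^\eta_k$ over the $k-d$ superfluous coordinates gives $s^{W_n}_d\le C\,s^\eta_d$ $\pi^d$-a.e.; and applying Fubini to a fixed $\pi^k$-null set that contains every point where $s^{W_n}_k$ fails to converge to $s^\eta_k$ or where $s^{W_n}_k>C\,s^\eta_k$ for some $n$, followed by dominated convergence in the $k-d$ integrated variables, gives $s^{W_n}_d\to s^\eta_d$ $\pi^d$-a.e. (Here $s^\eta_d\in L^1(\pi^d)$ since $\sigma_d$ is a probability measure.)

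Third, I would pull these facts back along the coordinate projections. For each $w\in W$ the map $x\mapsto x^w$ pushes $\pi^U$ forward to $\pi^{N(w)}=\pi^{\deg(w)}$, so the $\pi^U$-preimage of the exceptional $\pi^{\deg(w)}$-null set is $\pi^U$-null; as $W$ is finite, outside a single $\pi^U$-null set we have, simultaneously for all $w\in W$, both $s^{W_n}_{\deg(w)}(x^w)\to s^\eta_{\deg(w)}(x^w)$ and $s^{W_n}_{\deg(w)}(x^w)\le C\,s^\eta_{\deg(w)}(x^w)$. Taking the product over $w\in W$, the graphon integrand converges $\pi^U$-a.e.\ to the $\eta$-integrand and is bounded by $C^{|W|}\prod_{w\in W}s^\eta_{\deg(w)}(x^w)$, whose $\pi^U$-integral equals $C^{|W|}\,t(G,\eta)<\infty$ by \eqref{EQ:TG-ETA}. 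Dominated convergence then yields $t(G,W_n)\to t(G,\eta)$.

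The only real obstacle is the second step: the $k$-limit hypothesis is imposed only on the $k$-fold star densities, so both the almost-everywhere convergence and the uniform bound have to be propagated down to the stars $S_{\deg(w)}$ that actually occur in $G$ before the projections $J^U\to J^{N(w)}$ can be used. Once a single $\pi^U$-integrable majorant is in hand — and this is exactly where $t(G,\eta)<\infty$ enters — the conclusion is routine. Note that no degree bound is assumed on the class $U$, which is why Finner's inequality (Corollary \ref{cor:Finner}) is not invoked here; it will instead be the main tool for the $L^p$-type statements elsewhere in the paper.
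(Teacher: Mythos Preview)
Your proposal is correct and follows exactly the paper's approach: express both densities via \eqref{EQ:TG-ETA}, then apply dominated convergence with majorant $C^{|W|}\prod_{w\in W}s^\eta_{\deg(w)}(x^w)$. In fact you are more careful than the paper, which simply asserts the pointwise convergence and domination for the factors $s^{W_n}_{\deg(w)}$ without spelling out the reduction from $s_k$ to $s_d$ for $d<k$; your second step (integrating out $k-d$ coordinates and using Fubini plus dominated convergence on slices) supplies precisely that missing justification.
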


The right hand side is invariant under interchanging the bipartition classes of
$G$. Thus if, in addition to the conditions of Theorem \ref{THM:APP},
$\deg(u)\le k$ holds for all $u\in U$, then $t(G,\eta)=t(G^*,\eta)$.

\begin{proof}
We have
\[
t(G,\eta)= \intl_{J^U}\prod_{w\in W}s^\eta_{\deg(w)}(x_{N(w)})\,d\pi^U(x).
\]
and
\[
t(G,W_n)= \intl_{J^U}\prod_{w\in W}s^{W_n}_{\deg(w)}(x_{N(w)})\,d\pi^U(x).
\]
Here
\[
\prod_{w\in W}s^{W_n}_{\deg(w)}(x_{N(w)}) \to \prod_{w\in W}s^\eta_{\deg(w)}(x_{N(w)})
\]
almost everywhere, and
\[
\prod_{w\in W}s^{W_n}_{\deg(w)}(x_{N(w)}) \le C^{|W|} \prod_{w\in W}s^\eta_{\deg(w)}(x_{N(w)}).
\]
Since the function on the right is integrable by the condition that
$t(G,\eta)<\infty$, the theorem follows by Lebesgue's Dominated Convergence
Theorem.
\end{proof}

We state an analogous theorem under the stronger assumption of
$(k,p)$-looseness. Recall that a Markov space $(J,\BB,\eta)$ is $(k,p)$-loose $(k,p\in\Nbb)$, if it is $k$-loose and
$\|s^\eta_k\|_p$ is finite. Quite surprisingly it will turn out that
$(k,p)$-looseness of Markov spaces is a symmetric notion: a Markov space
$(J,\BB,\eta)$ is $(k,p)$-loose if and only if it is $(p,k)$-loose. \lljav{(For
bi-Markov spaces this symmetry property no longer holds, however.)}

\begin{theorem}\label{THM:APP2}
Let $(J,\BB,\eta)$ be a $(k,p)$-loose Markov space, let $W_n$ $(n=1,2,\dots)$
be a sequence of $1$-regular graphons on $(J,\BB)$ such that $\eta$ is the
$(k,p)$-limit of $(W_n)$. Let $G=(U,W,E)$ be a bigraph, and assume that
$\deg(u)\le p$ for $u\in U$ and $\deg(w)\le k$ for $w\in W$. Then
\[
t(G,\eta)=\lim_{n\to \infty} t(G,W_n)<\infty.
\]
\end{theorem}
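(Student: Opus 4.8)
The plan is to run the same argument as in Theorem~\ref{THM:APP}, but with Lebesgue's Dominated Convergence Theorem replaced by Finner's inequality (Theorem~\ref{THM:FINNER}) and its telescoping consequence, Corollary~\ref{cor:Finner}. First I would record the density formula: since every graphon is $j$-loose for all $j$ (Lemma~\ref{LEM:UNBOUND-LOOSE}), applying \eqref{EQ:TG-ETA} both to $\eta$ and to each $W_n$ gives $t(G,\eta)=\intl_{J^U}\prod_{w\in W}s^\eta_{\deg(w)}(x^w)\,d\pi^U(x)$ and, with $s^\eta$ replaced by $s^{W_n}$, the analogous expression for $t(G,W_n)$; here $x^w=x|_{N(w)}$ and $N(w)\subseteq U$. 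Setting $f_w(x^w)=s^\eta_{\deg(w)}(x^w)$ and $f_{w,n}(x^w)=s^{W_n}_{\deg(w)}(x^w)$, each factor depends on the variable set $M_w=N(w)$, and a vertex $u\in U$ lies in exactly $\deg(u)\le p$ of the sets $M_w$. This is precisely the incidence hypothesis of Theorem~\ref{THM:FINNER} with exponent $p$ — the combinatorial heart of the matter, and the reason the degree bound $\deg(u)\le p$ on the $U$-side is exactly what is needed.

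The one auxiliary fact required is that $(k,p)$-looseness descends to $(j,p)$-looseness for $j\le k$, with control of norms. Because $\sigma_j$ is the marginal of $\sigma_k$ on any $j$ of its $k$ coordinates (the steps being conditionally independent given the center), the density $s_j$ is obtained from $s_k$ by integrating out $k-j$ variables, i.e.\ $s_j=\E[\,s_k\mid\text{first }j\text{ coordinates}\,]$ under $\pi^k$; since conditional expectation is an $L^p$-contraction (Jensen's inequality, equivalently convexity of the $L^p$-norm, the same device already used for graphons in Section~\ref{SEC:UNBOUNDED}), we get $\|s^\eta_j\|_p\le\|s^\eta_k\|_p<\infty$ and, uniformly in $n$, $\|s^{W_n}_j\|_p\le\|s^{W_n}_k\|_p\le C$. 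The same linearity gives $\|s^{W_n}_j-s^\eta_j\|_p\le\|s^{W_n}_k-s^\eta_k\|_p\to 0$, so the $(k,p)$-limit hypothesis transfers to each relevant degree $j\le k$. In particular Theorem~\ref{THM:FINNER} already yields $t(G,\eta)\le\prod_{w\in W}\|s^\eta_{\deg(w)}\|_p\le\|s^\eta_k\|_p^{|W|}<\infty$, so finiteness is automatic and — unlike in Theorem~\ref{THM:APP} — need not be assumed.

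Finally I would feed $f_w$, $f_{w,n}$, $M_w=N(w)$ into Corollary~\ref{cor:Finner} with $m=|W|$ functions on $|U|$ variables and parameter $p$. The hypotheses $\|f_{w,n}\|_p\le C$ and $\|f_{w,n}-f_w\|_p\to 0$ have just been verified (the $L^p$-norms over $\pi^{N(w)}$ coincide with the $L^p(\pi^{\deg(w)})$-norms of $s^{W_n}_{\deg(w)}$ and $s^\eta_{\deg(w)}$), and the incidence condition holds. The corollary then gives $\intl_{J^U}\prod_{w}f_{w,n}\,d\pi^U\to\intl_{J^U}\prod_{w}f_{w}\,d\pi^U$, that is, $t(G,W_n)\to t(G,\eta)$.

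The step I expect to be the real (though modest) obstacle is this descent from $k$ to $j$: one must verify carefully that $s_j$ really is a marginalization of $s_k$ — which rests on the identity that $\sigma_j$ is the coordinate-marginal of $\sigma_k$, itself a consequence of the conditional independence of the steps — and that this descent is compatible with the $(k,p)$-limit assumption, so that a single exponent $p$ can be used uniformly across all $|W|$ factors when invoking Finner. Everything else is the routine bookkeeping of matching the bigraph density formula \eqref{EQ:TG-ETA} to the template of Corollary~\ref{cor:Finner}.
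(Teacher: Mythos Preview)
Your proposal is correct and follows essentially the same route as the paper: express $t(G,\eta)$ and $t(G,W_n)$ via \eqref{EQ:TG-ETA}, observe that each variable $x_u$ appears in at most $p$ factors, and invoke Corollary~\ref{cor:Finner}. The paper's proof is in fact terser than yours and leaves the descent from $(k,p)$-looseness to $(j,p)$-looseness (for $j=\deg(w)\le k$) implicit; your explicit verification via the marginal identity $\sigma_j=(\sigma_k)^{\{1,\dots,j\}}$ and the $L^p$-contractivity of conditional expectation fills precisely the gap a careful reader would want spelled out.
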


\begin{proof}
Let $W=\{v_1,\dots,v_m\}$.
Then
\begin{align*}
t(G,\eta) = \intl_{J^U}\prod_{v\in W}s^\eta_{\deg(v)}(x_{N(v)}) \,d\pi^U(x)
\end{align*}
and
\begin{align*}
t(G,W_n) = \intl_{J^U} \prod_{v\in W}s^{W_n}_{\deg(v)}(x_{N(v)})\,d\pi^U(x).
\end{align*}
Each variable $x_u$ ($u\in U$) occurs in at most $p$ factors, and so Corollary
\ref{cor:Finner} implies the theorem.
\end{proof}

\subsection{Projection onto stepfunctions}

A natural approximation of a Markov space $(J,\BB,\eta)$ is the following. Let
$\PP=\{P_1,\dots,P_n\}$ be a finite, measurable, non-degenerate partition. For
a function $f\in L^1(\pi)$, we define
\[
f_\PP = \frac1{\pi(P_i)} \intl_{P_i} f\,d\pi \qquad(x\in P_i).
\]
We generalize this to every $k$-variable function $h:~J^k\to\R$ by
\[
h_\PP = h_{\PP^k}.
\]
In particular, for a graphon $W$ we have
\[
W_\PP(x,y)=\frac1{\pi(P_i)\pi(P_j)}\intl_{P_i\times P_j} W(x,y)\,d\pi(x)\,d\pi(y)
\qquad(x\in P_i,\ y\in P_j).
\]
The linear operator $\Ebb_\PP:~f\mapsto f_\PP$ (called a ``stepping operator''
in \cite{HomBook}) is a bounded linear operator $L^1(J,\AA,\pi)\to
L^\infty(J,\BB,\pi)$. If we consider it as an operator $L^2(J,\AA,\pi)\to
L^2(J,\BB,\pi)$, then it is self-adjoint and idempotent.

One property of the stepping operator that will be important for us is that it
is contractive with respect to most ``everyday'' norms \cite[Proposition
14.13]{HomBook}, in particular, with respect to all $L^p$-norms
($p\in[1,\infty]$):
\begin{equation}\label{EQ:STEP-CONTRACT}
\|f_\PP\|_p \le \|f\|_p
\end{equation}
for all $f\in L^p(J,\BB,\pi)$.

We can extend this construction to Markov spaces, where its image is a bounded
graphon $W=W_{\eta_\PP}$, defined by
\[
W_{\eta_\PP}(x,y)=\frac{\eta(P_i \times P_j)}{\pi(P_i)\pi(P_j)}\qquad(x\in P_i,\ y\in P_j).
\]
The edge measure associated with this graphon is
\[
\eta_\PP = \sum_{i,j=1}^k \frac{\eta(P_i \times P_j)}{\pi(P_i)\pi(P_j)}\,((\mathds{1}_{P_i}\pi)\times(\mathds{1}_{P_j}\pi)).
\]
Note that the marginals of $\eta_\PP$ are $\pi$, and so $W_{\eta_\PP}$ is
$1$-regular.

In terms of the adjacency operator $\Ab$ of the Markov space, the operator
$\Ab_\PP$ associated with $\eta_\PP$ can be expressed as the operator product
$\Ab_\PP=\Ebb_\PP \Ab \Ebb_\PP$.

We will also need the stepping operator for bi-Markov spaces. Let $(I,J,\AA,\BB,\eta)$
be a bi-Markov space, and let $\PP=\{P_1,\dots,P_k\}$ and $\QQ=\{Q_1,\dots,Q_m\}$ be
finite, measurable, nondegenerate partitions of $(I,\AA,\pi)$ and
$(J,\BB,\pi_J)$, respectively. We define the following measures on $\AA\times\BB$:
\[
(\Ebb_\PP\eta)(S\times T) = \sum_{i=1}^k \frac{\pi(S\cap P_i)}{\pi(P_i)}\,\eta(P_i \times T),
\]
and
\[
(\eta\Ebb_\QQ)(S\times T) = \sum_{j=1}^l \frac{\pi_J(T\cap Q_j)}{\pi_J(Q_j)}\,\eta(S\times Q_j).
\]
We can also partition both sigma-algebras, to obtain
\bjav{
\[
(\Ebb_\PP\eta\Ebb_\QQ)(S\times T) = \sum_{i=1}^k\sum_{j=1}^l
\frac{\pi(S\cap P_i)}{\pi(P_i)}\frac{\pi_J(T\cap Q_j)}{\pi_J(Q_j)}\eta(P_i\times Q_j).
\]
}

For a bi-Markov space, we also have a (non-self-adjoint) operator $\Ab$, and then the
measures $\Ebb_\PP\eta$, $\eta\Ebb_\QQ$ and $\Ebb_\PP\eta\Ebb_\QQ$ are
associated with the (non-self-adjoint) operators $\Ebb_\PP\Ab$, $\Ab\Ebb_\QQ$
and $\Ebb_\PP\Ab\Ebb_\QQ$, respectively. Clearly all three of these measures have the same
marginals $\pi_I$ and $\pi_J$ as $\eta$.

\begin{lemma}\label{LEM:SKEW-MARGIN}
For every bi-Markov space $(I,J,\AA,\BB,\eta)$ and finite, measurable, nondegenerate
partitions $\PP$ and $\QQ$ of $I$ and $J$, respectively, the measures
$\Ebb_\PP\eta$, $\eta\Ebb_\QQ$ and $\Ebb_\PP\eta\Ebb_\QQ$ are absolutely continuous with respect to
$\pi_I\times\pi_J$, with a bounded density function.
\end{lemma}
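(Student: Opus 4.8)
The plan is to write out the defining formulas for $\Ebb_\PP\eta$ and $\eta\Ebb_\QQ$ explicitly and identify the density directly. First I would treat $\Ebb_\PP\eta$. By definition, for $S\in\AA$ and $T\in\BB$,
\[
\Ebb_\PP\eta(S\times T) = \sum_{i=1}^k \frac{\pi(S\cap P_i)}{\pi(P_i)}\,\eta(P_i\times T)
= \sum_{i=1}^k \frac{\eta(P_i\times T)}{\pi(P_i)}\,\pi(S\cap P_i).
\]
Now I use the disintegration \eqref{EQ:MARKOV-DISINT2} of $\eta$ with respect to $\pi$, which gives a measurable family $(P_u:~u\in I)$ with $\eta(P_i\times T)=\int_{P_i}P_u(T)\,d\pi(u)$. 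Hence $\eta(P_i\times T)/\pi(P_i)$ equals the $\pi$-average of $P_u(T)$ over $P_i$, so for $u\in P_i$ this number is exactly $(\Ebb_\PP h_T)(u)$, where $h_T(u)=P_u(T)$. Writing $g_T := \Ebb_\PP h_T$, which is a $\PP$-measurable function taking the value $\eta(P_i\times T)/\pi(P_i)$ on $P_i$, we get $\Ebb_\PP\eta(S\times T)=\int_S g_T(u)\,d\pi(u)$ for every box $S\times T$; since boxes generate $\AA\times\BB$ and both sides are measures, the identity holds on all of $\AA\times\BB$. So $\Ebb_\PP\eta\ll\pi\times\rho$ will follow once I exhibit a genuine density function on the product space.

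To produce that density, I would observe that $g_T(u)$ is itself obtained from a fixed kernel: define $W_\PP(u,w)$ to be, for $u\in P_i$, the Radon–Nikodym derivative at $w$ of the measure $T\mapsto\eta(P_i\times T)/\pi(P_i)$ with respect to $\rho$. This measure is a probability measure on $(J,\BB)$ (its total mass is $\eta(P_i\times J)/\pi(P_i)=\pi(P_i)/\pi(P_i)=1$) and it is absolutely continuous with respect to $\rho$: indeed if $\rho(T)=0$ then $\eta(I\times T)=0$, hence $\eta(P_i\times T)=0$. Thus $W_\PP(\cdot,\cdot)$ is well defined, measurable (it is constant in the first variable on each $P_i$), and $\int_J W_\PP(u,w)\,d\rho(w)=1$ for all $u$. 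Moreover $W_\PP$ is bounded: on $P_i$ it equals $d(\eta(P_i\times\cdot))/d\rho$ divided by $\pi(P_i)$, and since $\eta(P_i\times T)\le\rho(T)$ for all $T$ (as $\eta(P_i\times T)\le\eta(I\times T)=\rho(T)$), that derivative is at most $1$ a.e., giving $0\le W_\PP(u,w)\le 1/\pi(P_i)\le 1/\min_i\pi(P_i)$. Now Fubini gives
\[
\Ebb_\PP\eta(S\times T)=\intl_S g_T(u)\,d\pi(u)=\intl_S\intl_T W_\PP(u,w)\,d\rho(w)\,d\pi(u)
=\intl_{S\times T} W_\PP\,d(\pi\times\rho),
\]
so $W_\PP=d(\Ebb_\PP\eta)/d(\pi\times\rho)$ is a bounded density, as claimed. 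The statement for $\eta\Ebb_\QQ$ is entirely symmetric: applying the same argument to the reverse X-space $\Mb^*$ (where $\eta\Ebb_\QQ$ becomes $\Ebb_\QQ\eta^*$) yields a bounded density $W^\QQ$ with respect to $\pi\times\rho$, with the analogous bound $0\le W^\QQ\le 1/\min_j\rho(Q_j)$.

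The only mildly delicate point — and the step I would flag as the main thing to get right — is justifying the existence and boundedness of the per-class Radon–Nikodym derivative $d(\eta(P_i\times\cdot))/d\rho$ uniformly, i.e. establishing $\eta(A\times T)\le\rho(T)$ for $A\subseteq I$ (immediate from $\eta(A\times T)\le\eta(I\times T)=\rho(T)$ since $\eta$ is a measure) and concluding that the derivative is $\le 1$ a.e.\ with respect to $\rho$; everything else is the bookkeeping of passing from boxes to the full product $\sigma$-algebra via the uniqueness of measures agreeing on a generating $\pi$-system. No deeper input than Proposition \ref{PROP:DISINT} and elementary properties of Radon–Nikodym derivatives is needed, and in fact the argument shows a little more than stated: the density functions are stepfunctions in one variable, bounded by the reciprocal of the smallest partition-class measure.
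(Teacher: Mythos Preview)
Your proof is correct and uses the same key observation as the paper, namely that $\eta(P_i\times T)\le\eta(I\times T)=\rho(T)$ for every $T\in\BB$. The paper simply bounds $\Ebb_\PP\eta(A\times B)\le\big(\sum_i 1/\pi(P_i)\big)\pi(A)\rho(B)$ on boxes and concludes, whereas you carry this one step further by explicitly constructing the density $W_\PP$ piecewise on each $P_i\times J$; this extra work buys you the sharper bound $1/\min_i\pi(P_i)$ in place of $\sum_i 1/\pi(P_i)$ and the additional observation that the density is a stepfunction in the first variable, but the underlying argument is the same.
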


\begin{proof}
Checking this for $\Ebb_\PP\eta$, let $S\in \AA$ and $T\in\BB$. Then
\[
(\Ebb_\PP\eta)(S\times T) \le \sum_{i=1}^k \frac{\pi_I(S)}{\pi_I(P_i)}\,\eta(I\times T)
=\Big(\sum_{i=1}^k \frac{1}{\pi_I(P_i)}\Big) \pi_I(S)\pi_J(T),
\]
which implies that $\Ebb_\PP\eta$ is absolutely continuous with respect to
$\pi_I\times\pi_J$, and its density function is bounded by $\sum_i 1/\pi_I(P_i)$. The argument for $\eta\Ebb_\QQ$ is symmetric, and the result for $\Ebb_\PP\eta\Ebb_\QQ$ follows from the previous two, the fact that $\Ebb_\PP\eta\Ebb_\QQ=(\Ebb_\PP\eta)\Ebb_\QQ$ and the fact that the marginals of $\Ebb_\PP\eta$ are also $\pi_I$ and $\pi_J$.
\end{proof}

\subsection{Stepfunction approximation}

Let $W$ be a bounded graphon and let $(\PP_i)_{i=1}^\infty$ be an exhausting
partition sequence (see Subsection \ref{SEC:PARTITIONS}). The Martingale Convergence Theorem implies that
$W_{\PP_i}\to W$ almost everywhere on $J^2$, and hence $(W_{\PP_i})^G\to W^G$
almost everywhere on $J^V$ for every graph $G$. It is easy to check that the
sequence $W_{\PP_i}^G$ is uniformly integrable, and hence $W_{\PP_i}^G\to W^G$
in $L^1$, which implies that the corresponding measures also converge. In
particular,
\begin{equation}\label{EQ:TGWPP2TGW}
t(G,W_{\PP_i})\to t(G,W)
\end{equation}

How far does this fact extend beyond graphons? Under what conditions on $G$ and
$\eta$ does $\lim_{i_\to\infty} t(G,\eta_{\PP_i})$ exist for every exhausting
partition sequence $(\PP_i)_{i=1}^\infty$? Is the limit value independent of
the sequence of partitions?

Recall that we say that $\eta^G$ is partition approximable if $\eta_{\PP_i}^G\to\eta^G$ on boxes for every exhausting partition sequence.
Our goal in the next sections is to establish that
$\eta^G$ is partition approximable for reasonably large classes of graphs $G$
and Markov spaces $\eta$. To motivate this goal, let us state a simple
consequence about the normalized density $t^*$ (see Equation \eqref{EQ:TGH}).

\begin{prop}
\bjav{ Let $(J,\BB,\eta)$ be a Markov space. Then there is a sequence of simple graphs
$(H_i)_{i=1}^\infty$ such that
\[
\lim_{i\to\infty} t^*(G,H_i)=t(G,\eta)
\]
for every graph $G$ such that $\eta^G$
is partition approximable. }
\end{prop}

\begin{proof}
Let $(\PP_i)_{i=1}^\infty$ be an exhausting partition sequence. For every $i\ge
1$, there is an appropriate number $c_i>0$ such that $c_i\eta_{\PP_i}$ is a
graphon (with values in $[0,1]$), and so by dense graph limit theory, there is
a sequence of graphs $H_{i,1},H_{i,2},\dots$ such that $t(G,H_{i,j})\to
t(G,c_i\eta_{\PP_i})= c_i^{|E|} t(G,\eta_{\PP_i})$ for any $G$. In particular,
$t(K_2,H_{i,j})\to c_i t(K_2,\eta_{\PP_i})$, and hence $t^*(G,H_{i,j})\to
t(G,\eta_{\PP_i})$. Furthermore, $t(G,\eta_{\PP_i})\to t(G,\eta)$
$(i\to\infty)$ if $\eta^G$ is partition approximable. Since there are countably
many graphs $G$ to be considered, a standard diagonalization argument completes
the proof. \bjav{Note that two diagonalizations should happen: one to get rid of the partitions $\PP_i$ and one to make a single sequence for every $G$.}
\end{proof}

\subsection{Weakly norming graphs}

A graph $G$ is called {\it weakly norming} if
\[
\|W\|_G:=t(G,|W|)^{1/|E(G)|}
\]
is a norm on symmetric bounded measurable functions $W:~I^2\to\R$. This
property was introduced by Hatami \cite{HH}. It is easy to see that all weakly
norming graphs are bipartite; main examples are even cycles, hypercubes and
complete bipartite graphs.

Since the operator $W\mapsto W_\PP$ is contractive with respect to a large
class of norms, including all norms defined by graphs (see e.g.\ Proposition
14.13 in \cite{HomBook}), weakly norming graphs satisfy the inequality
\begin{equation}\label{EQ:PART-MON-DEF}
t(G,W_\PP)\le t(G,W)
\end{equation}
for every graphon $W$ and every finite, measurable, non-degenerate partition
$\PP$. This property is closely related to the well-known Sidorenko-Simonovits
conjecture, which says that $t^*(G,H)\ge 1$ for every bipartite graph $G$ and
every graph $H$. This is equivalent to saying that $t^*(G,W)\ge 1$ for every
bipartite graph $G$ and every graphon $W$. For the trivial partition
$\PP_0=\{J\}$ we have $t(G,W_{\PP_0})=t(K_2,W)^{|E(G)|}$, and hence every graph
$G$ satisfying \eqref{EQ:PART-MON-DEF} satisfies the Sidorenko conjecture.

Property \eqref{EQ:PART-MON-DEF} of a graph $G$, required for every graphon $W$
and every finite, measurable, non-degenerate partition $\PP$, was introduced in
\cite{KMPW}, and called the {\it step Sidorenko property}. It was proved in
\cite{DGHRR} that this property is equivalent to being weakly norming.

For us, however, the inequality \eqref{EQ:PART-MON-DEF} is relevant only for
$1$-regular graphons. Then it holds for more graphs besides weakly norming
ones, for example, for all trees. Therefore we name it the {\it weak step
Sidorenko property}. It is easy to see that only bipartite graphs can have this
property. As far as we can see, it might even hold for all bipartite graphs. If
the graph $G$ has the weak step Sidorenko property, then the convergence in
\eqref{EQ:TGWPP2TGW} is monotone.

\begin{remark}\label{RE:PART-DENSE}
These considerations motivate the following version of density, which we call
{\it partition-density}:
\begin{equation}\label{EQ:T-SUPDEF}
t_{\rm part}(G,\eta)=\sup_{\PP}t(G,\eta_\PP),
\end{equation}
where $(J,\BB,\eta)$ is a Markov space, and $\PP$ ranges over all finite,
measurable, non-degenerate partitions of $J$. Partition density may be
different from density even for ordinary graphs in place of $\eta$. For
example, if $H$ is bipartite and $G$ is not, and $H$ has at least one edge,
then for the trivial (indiscrete) partition $\PP$, we have
$\left(\eta_H\right)_\PP= c(\pi\times\pi)$, and so $t(G,H)=0$ but $t_{\rm
part}(G,\eta_H)>0$.

\daku{ On the other hand, the monotonicity from \eqref{EQ:PART-MON-DEF} and the
Martingale Convergence Theorem applied to $(W^G)_{\PP_i}$ along any exhaustive
partition sequence $(\PP_i)_{i=1}^\infty$ implies that $t_{\rm
part}(G,W)=t(G,W)$ for every weakly norming graph $G$ and every graphon $W$. }
It could be interesting to explore further properties of the partition-density.
\end{remark}

\begin{remark}\label{REM:X-SPACE}
The weakly norming property, the step Sidorenko property and its weak version
can be defined, mutatis mutandis, for bi-Markov spaces, and the above considerations
remain valid. In particular, even cycles, complete bigraphs and hypercubes
remain weakly norming, and hence have the step Sidorenko property.
\end{remark}

\subsection{Partition approximation of $(k,p)$-loose spaces}\label{SSEC:PART-APPROX}

While our main goal is to prove results about Markov spaces, we study
\lljav{$(k,p)$-looseness in} bi-Markov spaces first. We address the issues of
approximability by step functions. It turns out that for $(k,p)$-loose Markov
spaces and bi-Markov spaces, $\eta^G$ is partition approximable for a large class of
(bipartite and bi-) graphs.

We start with discussing the total measure of $\eta^G$. For a $(k,p)$-loose
Markov space, we can define the quantity
\begin{equation}\label{EQ:ETA-NORM-DEF}
\|\eta\|_{k,p}:=\|s^\eta_k\|_p^{1/k}.
\end{equation}
For a bi-Markov space $(k,p)$-loose from (say) $I$, we define similarly
\begin{equation}\label{EQ:ETA-NORM-DEF-X}
\|\eta\|_{I,k,p}:=\|s^\eta_{I,k}\|_p^{1/k}.
\end{equation}
If $p\geq 2$ and $\eta$ is not $(k,p)$-loose, then we define $\|\eta\|_{k,p}$
to be infinite. If $k=1$,
then $s_1^\eta=d\sigma_1/d\pi=1$, so $\|\eta\|_{k,p}=1$ by \eqref{EQ:ETA-NORM-DEF}.  \daku{When $p=1$, it may happen that $\eta$ is not $k$-loose and thus $s^\eta_k$ is not defined. 
However, the $L^1$ norm of a Radon--Nikodym derivative being the same as the total measure, we can extend the above definition to also encompass the non-$k$-loose cases and define $\|\eta\|_{k,1}:=\sigma_k(J^k)^{1/k}=1$ for \emph{any} $\eta$}.

Note that
\begin{equation}\label{EQ:ETA-NORM-KKP}
\|\eta\|_{k,p} = \|s^\eta_k\|_p^{1/k} =  t(K_{k,p},\eta)^{1/(kp)}
\end{equation}
by \eqref{EQ:TG-ETA}. We will show that for Markov spaces
$\|\eta\|_{k,p}=\|\eta\|_{p,k}$, or in other words,
$t(K_{k,p},\eta)=t(K_{p,k},\eta)$.

As cited above, Hatami \cite{HH} proved that
\begin{equation}\label{EQPK1}
\|W\|_{k,p}=t(K_{k,p},W)^{1/(kp)}
\end{equation}
is a norm on (not necessarily symmetric) bounded measurable functions
$W:~I\times J\to\R$. Clearly $t(K_{k,p},W)=t(K_{p,k},W^*)$ holds for every
bounded measurable function $W$, and so
\begin{equation}\label{EQPK2}
\|W\|_{k,p}=\|W^*\|_{p,k}.
\end{equation}
In particular, $\|W\|_{k,p}=\|W\|_{p,k}$ if $I=J$ and $W$ is symmetric. It is
easy to check that if $(J,\BB,\eta_W)$ is a Markov space defined by a 1-regular
graphon $W$, then
\[
\|\eta_W\|_{k,p} = \|W\|_{k,p}.
\]
\lljav{Formally the same equation holds for a bi-Markov space defined by a $1$-regular
bigraphon.}

Consider a bi-Markov space $(I,J,\AA,\BB,\eta)$. Let $\PP$ and $\QQ$ be finite,
measurable, non-degenerate partitions of $I$ and $J$, respectively. By Lemma
\ref{LEM:SKEW-MARGIN}, the measures $\Ebb_\PP\eta$ and $\eta^*\Ebb_\PP$ are
represented by bounded measurable functions $W_1$, $W_2$, where trivially
$W_1^*=W_2$. Hence \eqref{EQPK2} implies that
\begin{equation}\label{EQPK3}
\|\Ebb_\PP\eta\|_{k,p}=\|W_1\|_{k,p}=\|W_2\|_{p,k}=\|\eta^*\Ebb_\PP\|_{p,k}.
\end{equation}
Similarly we have $\|\eta\Ebb_\QQ\|_{k,p}=\|\Ebb_\QQ\eta^*\|_{p,k}$. For an
exhausting partition sequence, in the limit, we have more:

\begin{lemma}\label{tripeq}
Let $(I,J,\AA,\BB,\eta)$ be a bi-Markov space, and let $(\PP_i)_{i=1}^\infty$ and
$(\QQ_j)_{j=1}^\infty$ be exhausting partition sequences of $I$ and $J$,
respectively. Then
\[
\lim_{i\to\infty} \|\eta\Ebb_{\QQ_i}\|_{k,p}=\lim_{i\to\infty} \|\Ebb_{\PP_i}\eta\Ebb_{\QQ_i}\|_{k,p}
=\lim_{i\to\infty} \|\Ebb_{\PP_i}\eta\|_{k,p}=\|\eta\|_{k,p}.
\]
\end{lemma}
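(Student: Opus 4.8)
The plan is to handle the ``honest bigraphon'' quantity $\|\Ebb_{\PP_i}\eta\Ebb_{\QQ_i}\|_{k,p}$ and the two one-sided quantities by a squeeze, isolating the one place where the possibly singular measure $\eta$ itself is touched. First I would dispose of the trivial cases $k=1$ or $p=1$, in which stepping preserves marginals so that every one of the measures appearing is a $1$-regular bigraphon and all quantities equal $\|\eta\|_{k,p}=1$; and, since all the quantities are unchanged by modifying partition classes on null sets, I would assume both partition sequences are generating. Note that by Lemma~\ref{LEM:SKEW-MARGIN} the measures $\Ebb_{\PP_i}\eta$ and $\eta\Ebb_{\QQ_i}$ are bounded $1$-regular bigraphons, and $\Ebb_{\PP_i}\eta\Ebb_{\QQ_i}$ is a stepfunction, so all norms below are finite. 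The two external facts to invoke are: \eqref{EQ:STEP-CONTRACT} together with the Martingale Convergence Theorem along generating sequences; and the fact that, since $K_{k,p}$ is weakly norming (Remark~\ref{REM:X-SPACE}), for a \emph{bounded} bigraphon $W$ the one-sided stepping operators on either variable are $\|\cdot\|_{k,p}$-contractive and, along a generating partition sequence, $\|\cdot\|_{k,p}$-converge (monotonically) to $W$ --- the contractivity following from the two-sided step Sidorenko property by taking an exhausting sequence on the other side, the convergence from dominated convergence in the integral defining $t(K_{k,p},\cdot)$.

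\textbf{Step 1} ($\|\eta\Ebb_{\QQ_i}\|_{k,p}\uparrow\|\eta\|_{k,p}$). The key point is that stepping $\eta$ on the $J$-side merely replaces the density $s^\eta_k$ by its $\QQ^k$-stepfunction: $s^{\eta\Ebb_\QQ}_k=(s^\eta_k)_{\QQ^k}$. Indeed, if $(P_u)$ is the step kernel of $\eta$, then that of $\eta\Ebb_\QQ$ carries $u$ to $\sum_j\frac{P_u(Q_j)}{\rho(Q_j)}\rho|_{Q_j}$; hence the joint law of $k$ independent steps assigns to every box formed from classes of $\QQ$ the same mass as $\sigma^\eta_k$ (namely $\int_I\prod_m P_u(Q_{j_m})\,d\pi(u)$) and is proportional to $\rho^k$ on each such box, and comparing integrals over the generating $\pi$-system of $\QQ^k$-boxes gives the claim. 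Consequently, straight from the definition $\|\cdot\|_{k,p}=\|s_k\|_p^{1/k}$ and from \eqref{EQ:STEP-CONTRACT},
\[
\|\eta\Ebb_{\QQ_i}\|_{k,p}=\|(s^\eta_k)_{\QQ_i^k}\|_p^{1/k}\le\|s^\eta_k\|_p^{1/k}=\|\eta\|_{k,p},
\]
and since $(s^\eta_k)_{\QQ_i^k}=\Ebb_{\QQ_i^k}s^\eta_k$ is a martingale along the generating sequence $(\QQ_i^k)$ of $J^k$, the Martingale (resp.\ Monotone) Convergence Theorem gives $\|(s^\eta_k)_{\QQ_i^k}\|_p\uparrow\|s^\eta_k\|_p$ --- whether this limit is finite (the $(k,p)$-loose case) or $+\infty$. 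Thus $\|\eta\Ebb_{\QQ_i}\|_{k,p}\uparrow\|\eta\|_{k,p}$.

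\textbf{Step 2} (the squeeze). Write $N=\|\eta\|_{k,p}$. Since $\Ebb_{\PP_i}\eta\Ebb_{\QQ_i}=\Ebb_{\PP_i}(\eta\Ebb_{\QQ_i})$ is a one-sided stepping of the bounded bigraphon $\eta\Ebb_{\QQ_i}$, we get $\|\Ebb_{\PP_i}\eta\Ebb_{\QQ_i}\|_{k,p}\le\|\eta\Ebb_{\QQ_i}\|_{k,p}\to N$; conversely, for fixed $j$ and $i\ge j$, coarsening the $J$-side from $\QQ_i$ to $\QQ_j$ (using $\Ebb_{\QQ_i}\Ebb_{\QQ_j}=\Ebb_{\QQ_j}$) only decreases the norm, so $\|\Ebb_{\PP_i}\eta\Ebb_{\QQ_i}\|_{k,p}\ge\|\Ebb_{\PP_i}(\eta\Ebb_{\QQ_j})\|_{k,p}$, which tends as $i\to\infty$ to $\|\eta\Ebb_{\QQ_j}\|_{k,p}$ by one-sided stepfunction approximation of the bounded bigraphon $\eta\Ebb_{\QQ_j}$; letting $j\to\infty$ gives $\liminf_i\|\Ebb_{\PP_i}\eta\Ebb_{\QQ_i}\|_{k,p}\ge N$, so this quantity converges to $N$. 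Finally, stepping the $J$-side of $\Ebb_{\PP_i}\eta$ down to $\QQ_i$ contracts, whence $\|\Ebb_{\PP_i}\eta\|_{k,p}\ge\|\Ebb_{\PP_i}\eta\Ebb_{\QQ_i}\|_{k,p}\to N$; and writing $\|\Ebb_{\PP_i}\eta\|_{k,p}=\sup_j\|\Ebb_{\PP_i}(\eta\Ebb_{\QQ_j})\|_{k,p}$ by stepfunction approximation along $(\QQ_j)$, each term is $\le\|\eta\Ebb_{\QQ_j}\|_{k,p}\le N$ by Step~1, so $\|\Ebb_{\PP_i}\eta\|_{k,p}\le N$; hence this quantity converges to $N$ as well.

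The point I expect to be the main obstacle is the identity $s^{\eta\Ebb_\QQ}_k=(s^\eta_k)_{\QQ^k}$ of Step~1: it is exactly the bridge from the possibly singular $\eta$ --- whose step kernels $P_u$ need not be absolutely continuous with respect to $\rho$, so $s^\eta_k$ cannot be written as an average of pointwise densities --- to the $L^p$-function $s^\eta_k$, and it has to be verified at the level of the joint $k$-step distributions on $J^k$ rather than via disintegration. Once it is in place, the rest is the routine contraction/approximation calculus for weakly norming graphs applied to genuine bounded bigraphons, organized into the squeeze above.
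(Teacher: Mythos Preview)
Your squeeze in Step~2 is correct and is essentially the paper's argument (the paper obtains the equality for $\|\Ebb_{\PP_i}\eta\|_{k,p}$ by invoking the symmetry \eqref{EQPK3} rather than your direct two-sided bound, but the substance is the same). The identity $s^{\eta\Ebb_\QQ}_k=(s^\eta_k)_{\QQ^k}$ and the martingale convergence you use in Step~1 are also exactly what the paper uses --- in the $k$-loose case.

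There is, however, one genuine gap. The lemma is stated for an arbitrary X-space, with no $k$-looseness hypothesis, and your Step~1 tacitly assumes that $s^\eta_k$ exists. If $\sigma^\eta_k$ is not absolutely continuous with respect to $\rho^k$, then there is no $s^\eta_k$; the expression $(s^\eta_k)_{\QQ_i^k}=\Ebb_{\QQ_i^k}s^\eta_k$ is undefined, and the martingale argument cannot be run. Your remark ``whether this limit is finite or $+\infty$'' only covers the sub-case where $s^\eta_k$ exists in $L^1(\rho^k)$ but may fail to lie in $L^p$; it does not cover the case where $\eta$ is not $k$-loose at all, in which $\|\eta\|_{k,p}=\infty$ by convention and one must still show $\|s^{\eta\Ebb_{\QQ_i}}_k\|_p\to\infty$.

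The paper handles this missing case by a short direct argument: pick $U\subset J^k$ with $\rho^k(U)=0$ but $\sigma^\eta_k(U)=c>0$; since $(\QQ_i^k)$ is exhausting for $\sigma^\eta_k$ as well (Lemma~\ref{appendix2}), for large $i$ one finds a $\QQ_i^k$-measurable $U'$ with $\rho^k(U')<\eps$ and $\sigma^\eta_k(U')>c-\eps$; then $\int_{U'} s^{\eta\Ebb_{\QQ_i}}_k\,d\rho^k=\sigma^\eta_k(U')$, and H\"older forces $\|s^{\eta\Ebb_{\QQ_i}}_k\|_p^p\ge\eps^{1-p}(c-\eps)^p$. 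If you prefer to stay within your martingale framework, the same conclusion follows thus: the functions $f_i:=s^{\eta\Ebb_{\QQ_i}}_k$ form a nonnegative $\rho^k$-martingale with $\|f_i\|_1=1$; if $\sup_i\|f_i\|_p$ were finite for some $p>1$, the $f_i$ would be uniformly integrable and converge in $L^1$ to a limit of mass $1$, which would be a density for $\sigma^\eta_k$ --- contradicting non-$k$-looseness. Either patch completes your Step~1.
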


\begin{proof}
We start with the first equality. Since $K_{k,p}$ is weakly norming, it follows by the step Sidorenko property \eqref{EQ:PART-MON-DEF}
that both limits exist, and also that
$\|\eta\Ebb_{\QQ_i}\|_{k,p}\geq \|\Ebb_{\PP_i}\eta\Ebb_{\QQ_i}\|_{k,p}$. Hence
we obtain that
\[
\lim_{i\to\infty} \|\eta\Ebb_{\QQ_i}\|_{k,p}\ge \lim_{i\to\infty} \|\Ebb_{\PP_i}\eta\Ebb_{\QQ_i}\|_{k,p}.
\]

Let $j\in\mathbb{N}$ be an arbitrary fixed number. Since $W=\eta\Ebb_{\QQ_j}$
is a bounded measurable function, the uniformly bounded measurable functions
$\Ebb_{\PP_i}\eta\Ebb_{\QQ_j}=\Ebb_{\PP_i}W$ converge to $W$ in $L^1$ as
$i\to\infty$, and thus by (\ref{EQPK1}) we get
\[
\lim_{i\to\infty} \|\Ebb_{\PP_i}\eta\Ebb_{\QQ_j}\|_{k,p}=\|\eta\Ebb_{\QQ_j}\|_{k,p} .
\]
Again by the step Sidorenko property \eqref{EQ:PART-MON-DEF} we have that for $i>j$,
\[
\|\Ebb_{\PP_i}\eta\Ebb_{\QQ_i}\|_{k,p}\ge \|\Ebb_{\PP_i}\eta\Ebb_{\QQ_i}\Ebb_{\QQ_j}\|_{k,p}
= \|\Ebb_{\PP_i}\eta\Ebb_{\QQ_j}\|_{k,p}
\]
and so by taking limit on both sides,
\[
\lim_{i\to\infty} \|\Ebb_{\PP_i}\eta\Ebb_{\QQ_i}\|_{k,p}\ge \|\eta\Ebb_{\QQ_j}\|_{k,p}.
\]
This holds for every $j$, which proves the first equality. The second follows
by interchanging the coordinates.

Finally, we prove that
\[
\lim_{j\to\infty} \|\eta\Ebb_{\QQ_j}\|_{k,p}=\|\eta\|_{k,p}.
\]
If $p=1$ then the statement is trivial since all terms are $1$. Assume that
$p>1$. We have two cases. If $\eta$ is $k$-loose from $I$, then $s_{I,k}^\eta$ is in
$L^1(J^k,\pi_J^k)$, and so
\[
s_{I,k}^{\eta\Ebb_{\QQ_j}}=\mathbb{E}(s_{I,k}^\eta|\QQ_j^k).
\]
By Lemma \ref{appendix2}, we have that $(\QQ_j^k)_{i=1}^\infty$ is an
exhausting partition sequence for $\pi_J^k$ and so the (potentially infinite)
$L^p$-norm of $\mathbb{E}(s_{I,k}^\eta|\QQ_j^k)$ converges to the $L^p$-norm of
$s_{I,k}^\eta$ as $j\to\infty$.

Assume now that $\eta$ is not $k$-loose from $I$.
We have that $\sigma_{I,k}$ is not absolutely continuous with respect to $\pi_J^k$ and so
there is a measurable set $U\subset J^k$ such that $\pi_J^k(U)=0$ but
$c=\sigma_{I,k}(U)>0$. By Lemma \ref{appendix2}, for every $\epsilon>0$ and large
enough $j$, there is a set $U'$ that is the union of $\QQ_j^k$ partition sets
such that $\pi_J^k(U')\leq\epsilon$ and $\sigma_{I,k}(U')>c-\epsilon$. For such a
$j$,
\[
\intl_{U'} s_{I,k}^{\eta\Ebb_{\QQ_j}}\,d\pi_J^k=\intl_{U'} s_{I,k}^{\eta}\,d\pi_J^k=\sigma_{I,k}(U').
\]
H\"older's inequality implies that
\[
\intl_{U'} \Bigl(s_{I,k}^{\eta\Ebb_{\QQ_j}}\Bigr)^p~d\pi_J^k\geq
\frac{\left(\intl_{U'} s_{I,k}^{\eta\Ebb_{\QQ_j}}~d\pi_J^k\right)^p}{\pi_J^k(U')^{p-1}}
\geq\epsilon^{1-p}(c-\epsilon)^p.
\]
Applying this for every $\epsilon>0$ we obtain that
$\|s_{I,k}^{\eta\Ebb_{\QQ_j}}\|_p\to\infty$ as $j\to\infty$.
\end{proof}

From the previous lemma we obtain the next theorem.

\begin{theorem}\label{quadeq}
Let $(J,\BB,\eta)$ be a Markov space and $p,k\in\Nbb$. Then
\[
\|\eta\|_{p,k}=\|\eta\|_{k,p}=t_{\rm{part}}(K_{k,p},\eta)^{1/(pk)}.
\]
\end{theorem}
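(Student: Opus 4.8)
The plan is to reduce the symmetric statement to the X-space Lemma~\ref{tripeq} by taking both partition sequences equal, and to exploit that at each finite stage the stepped object is a \emph{symmetric} $1$-regular graphon, for which Hatami's identity~\eqref{EQPK2} already yields $\|\cdot\|_{k,p}=\|\cdot\|_{p,k}$.

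First I would regard $(J,\BB,\eta)$ as the X-space $(J,J,\BB,\BB,\eta)$ with $\pi=\rho$; this is harmless, since Lemma~\ref{tripeq} nowhere uses symmetry of $\eta$. Fix an exhausting (say, generating) partition sequence $(\PP_i)_{i=1}^\infty$ of $(J,\BB,\pi)$, which exists because $(J,\BB)$ is standard Borel, and apply Lemma~\ref{tripeq} with $\QQ_i=\PP_i$ for every $i$. Reading off the middle term of the chain of limits gives
\[
\lim_{i\to\infty}\|\Ebb_{\PP_i}\eta\Ebb_{\PP_i}\|_{k,p}=\|\eta\|_{k,p},
\]
and running the same lemma with the pair $(k,p)$ replaced by $(p,k)$ gives
\[
\lim_{i\to\infty}\|\Ebb_{\PP_i}\eta\Ebb_{\PP_i}\|_{p,k}=\|\eta\|_{p,k}.
\]
Both statements are meaningful even when the right-hand sides are $+\infty$, since that possibility is already handled inside the proof of Lemma~\ref{tripeq}.

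Now for each fixed $i$ the measure $\Ebb_{\PP_i}\eta\Ebb_{\PP_i}$ is the step measure $\eta_{\PP_i}$, represented by the step graphon $W_{\eta_{\PP_i}}$, which is bounded, $1$-regular, and --- because $\eta$ is symmetric and the two partitions coincide --- \emph{symmetric}. Hence \eqref{EQPK2} applied to $W_{\eta_{\PP_i}}$ collapses to $\|W_{\eta_{\PP_i}}\|_{k,p}=\|W_{\eta_{\PP_i}}\|_{p,k}$, that is, $\|\Ebb_{\PP_i}\eta\Ebb_{\PP_i}\|_{k,p}=\|\Ebb_{\PP_i}\eta\Ebb_{\PP_i}\|_{p,k}$ for every $i$. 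Passing to the limit along the two displays above yields $\|\eta\|_{k,p}=\|\eta\|_{p,k}$. Finally, the identity $\|\eta\|_{k,p}=t(K_{k,p},\eta)^{1/(pk)}$ is precisely~\eqref{EQ:ETA-NORM-KKP}, a consequence of the density formula~\eqref{EQ:TG-ETA}; the degenerate cases $p=1$ or $k=1$ reduce to the normalization $t(K_{k,1},\eta)=t(S_k,\eta)=1$.

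I do not expect a genuine obstacle here: the substantive analysis --- commuting the partition operators past the $K_{k,p}$-density in the limit, and disposing of the non-$(k,p)$-loose case --- has already been carried out inside Lemma~\ref{tripeq}. The only points that need a moment's care are that Lemma~\ref{tripeq} may legitimately be invoked for the X-space arising from a symmetric Markov space with a single partition sequence used on both sides, and that $W_{\eta_{\PP_i}}$ is honestly symmetric, so that Hatami's $\|W\|_{k,p}=\|W^{\ast}\|_{p,k}$ degenerates to the desired equality at every finite stage.
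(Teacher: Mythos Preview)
Your proof is correct and follows essentially the same route as the paper: reduce to the step approximations via Lemma~\ref{tripeq} with $\QQ_i=\PP_i$, use symmetry at each finite stage, and pass to the limit. The only cosmetic difference is that the paper routes the symmetry through the one-sided steps and \eqref{EQPK3} (using $\eta^*=\eta$ to get $\|\eta\Ebb_{\PP_i}\|_{p,k}=\|\Ebb_{\PP_i}\eta\|_{k,p}$), whereas you invoke $\|W\|_{k,p}=\|W\|_{p,k}$ directly for the symmetric two-sided step graphon $W_{\eta_{\PP_i}}$; these are equivalent manifestations of the same identity.
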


\daku{
\begin{proof}
Let $(\PP_i)_{i=1}^\infty$ be an arbitrary exhausting partition sequence.
To see
the first equality, observe that by
Lemma \ref{tripeq} and \eqref{EQPK3},
\[
\|\eta\|_{p,k}=\lim_{i\to\infty} \|\eta\Ebb_{\PP_i}\|_{p,k}
=\lim_{i\to\infty} \|\Ebb_{\PP_i}\eta\|_{k,p}=\|\eta\|_{k,p}.
\]
For the second equality, by
\eqref{EQPK1},
\[
\lim_{i\to\infty}
t(K_{k,p},\eta_{\PP_i})^{1/(pk)}=\lim_{i\to\infty} \|\eta_{\PP_i}\|_{k,p} =\|\eta\|_{k,p}.
\]
However, since $K_{k,p}$ has the step Sidorenko property \eqref{EQ:PART-MON-DEF}, this yields
\[\|\eta\|_{k,p}=\lim_{i\to\infty}
t(K_{k,p},\eta_{\PP_i})^{1/(pk)}=\sup_{i\in\Nbb}
t(K_{k,p},\eta_{\PP_i})^{1/(pk)}.\]
Since any partition can appear in an exhausting sequence, we obtain the desired equality.
\end{proof}
}

\begin{lemma}\label{convcrit}
Let $(J,\BB,\mu)$ be a probability space, and $p\geq1$. Let $(\PP_i)_{i=1}^\infty$ be an
exhausting partition system. Assume that a sequence of $L^p$ functions
$(f_i)_{i=1}^\infty$ and another $L^p$ function $f$ on $(J,\BB,\mu)$ satisfy
\begin{enumerate}
\item $\lim_{i\to\infty} \|\Ebb(f_i|\PP_j)-\Ebb(f|\PP_j)\|_1=0$ for every
    $j$
\item $\lim_{i\to\infty} \|f_i\|_p=\|f\|_p$.
\end{enumerate}
Then  $\lim_{i\to\infty}\|f_i-f\|_p=0$.
\end{lemma}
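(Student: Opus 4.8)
The plan is to deduce the statement from the uniform convexity of $L^p(\mu)$, which holds for $1<p<\infty$; throughout I assume $p>1$ and let $q=p/(p-1)<\infty$ be the conjugate exponent. The external fact I will use is the Radon--Riesz (Kadec--Klee) property of uniformly convex spaces: if $f_i\to f$ weakly and $\|f_i\|\to\|f\|$, then $\|f_i-f\|\to 0$. Since hypothesis~(2) is precisely the convergence of norms, it suffices to prove that $f_i\to f$ weakly in $L^p(\mu)$.

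By hypothesis~(2) the sequence $\{f_i\}$ is bounded in $L^p(\mu)$, so by a standard $3\varepsilon$-argument it is enough to check that $\intl_J f_i g\,d\mu\to\intl_J f g\,d\mu$ for $g$ ranging over a dense subset of $L^q(\mu)$. I would take this dense subset to be $D$, the set of all functions measurable with respect to some $\PP_j$; since each $\PP_j$ is a finite partition, every element of $D$ is bounded. Density of $D$ in $L^q(\mu)$ follows from the Martingale Convergence Theorem: as $(\PP_i)$ is exhausting, the $\sigma$-algebras $\sigma(\PP_j)$ increase to $\BB$ modulo $\mu$-null sets, so $\Ebb(g|\PP_j)\to g$ in $L^q(\mu)$ for every $g\in L^q(\mu)$.

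It then remains to treat a fixed $g\in D$, say constant on the classes of $\PP_j$. Because $g$ is bounded and $\PP_j$-measurable, the defining property of the conditional expectation gives $\intl_J f_i g\,d\mu=\intl_J \Ebb(f_i|\PP_j)\,g\,d\mu$, and likewise with $f$ in place of $f_i$; subtracting,
\[
\Big|\intl_J f_i g\,d\mu-\intl_J f g\,d\mu\Big|\le \|g\|_\infty\,\big\|\Ebb(f_i|\PP_j)-\Ebb(f|\PP_j)\big\|_1 ,
\]
and the right-hand side tends to $0$ as $i\to\infty$ by hypothesis~(1). Hence $f_i\to f$ weakly in $L^p(\mu)$, and the lemma follows from the Radon--Riesz property stated above.

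The argument is essentially soft functional analysis; the one point deserving care is the density of $D$ in $L^q(\mu)$, where the exhausting hypothesis is used to absorb $\mu$-null sets (so that $\sigma(\bigcup_j\PP_j)$ may be identified with $\BB$). Note that $p>1$ is genuinely used: $q$ must be finite for $D$ to be dense in $L^q(\mu)$, and $L^1(\mu)$ is not uniformly convex.
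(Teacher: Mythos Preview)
Your proof is correct and takes a genuinely different route from the paper's. The paper gives a hands-on argument: it uses absolute continuity of $\int|f|^p\,d\mu$ to find a small exceptional set, martingale convergence to approximate $f$ and each $f_i$ by their conditional expectations, and then combines these with the norm convergence $\|f_i\|_p\to\|f\|_p$ via explicit $\epsilon$--$\delta$ bookkeeping to control $\|1_Uf_i\|_p$ on the exceptional set $U$ and conclude. Your argument replaces all of this by the single observation that hypotheses (1) and (2) together yield weak convergence plus norm convergence, after which the Radon--Riesz property of the uniformly convex space $L^p$ finishes the job. Your approach is shorter and more conceptual, and it makes transparent exactly where $p>1$ enters (both for density of step functions in $L^q$ and for uniform convexity); the paper's approach has the advantage of being self-contained, avoiding any appeal to the geometry of $L^p$. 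Your remark that the lemma fails for $p=1$ is also correct (e.g.\ $f_i(x)=1+\sin(2\pi i x)$ on $[0,1]$ with dyadic partitions and $f\equiv1$), and is consistent with how the lemma is used later in the paper, where $p\ge2$.
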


\begin{proof}
Let $\epsilon>0$. Then there is $\delta>0$ such that $\|1_Uf\|_p\leq\epsilon$
holds for every measurable set $U$ with $\mu(U)\leq\delta$. We can choose $j_0$
with the property that $\|f-\Ebb(f|\PP_j)\|_p\leq\epsilon$ holds for every
$j\geq j_0$. Then
\begin{equation}\label{kpl1}
\|1_U\Ebb(f|\PP_j)\|_p\leq \|1_Uf\|_p+\|1_U(f-\Ebb(f|\PP_j))\|_p\leq 2\epsilon
\end{equation}
hold for every $j\geq j_0$ and measurable set $U$ with $\mu(U)\leq\delta$. For
sufficiently big $i_0$ we can also guarantee that
$|\|f_i\|_p-\|f\|_p|\leq\epsilon$ holds for every $i\geq i_0$. For an arbitrary
$i\geq i_0$ we can choose $j\geq j_0$ such that both
$|f_i-\Ebb(f_i|\PP_j)|\leq\epsilon/2$ and
$|\Ebb(f_i|\PP_j)-\Ebb(f|\PP_j)|\leq\epsilon/2$ holds on a set $V$ of measure
at least $1-\delta$. It follows that $|f_i-\Ebb(f|\PP_j)|\leq\epsilon$ holds on
$V$. This implies that
\begin{equation}\label{kpl2}
\|1_Vf_i-1_V\Ebb(f|\PP_j)\|_p\leq\epsilon.
\end{equation}
Let $U$ be the complement of $V$. Using (\ref{kpl1}) we have that
\[
\|1_V\Ebb(f|\PP_j)\|_p\geq
\|\Ebb(f|\PP_j)\|_p-\|1_U\Ebb(f|\PP_j)\|_p\geq\|f\|_p-3\epsilon
\]
and thus by (\ref{kpl2})
\[
\|1_Vf_i\|_p\geq\|f\|_p-4\epsilon.
\]
Using the above inequalities we obtain
\begin{equation}\label{kpl3}
\|1_Uf_i\|_p^p=\|f_i\|_p^p-\|1_Vf_i\|_p^p\leq(\|f\|_p+\epsilon)^p-(\|f\|_p-4\epsilon)^p=:g(\epsilon).
\end{equation}
From (\ref{kpl2}), (\ref{kpl3}) and $f_i=1_Uf_i+1_Vf_i$ we get that
\[\|f_i-1_V\Ebb(f|\PP_j)\|_p\leq g(\epsilon)^{1/p}+\epsilon\]
By \eqref{kpl1}, we have
\[\|\Ebb(f|\PP_j)-1_V\Ebb(f|\PP_j)\|_p
=\|1_U\Ebb(f|\PP_j)\|_p\leq 2\epsilon\] and thus
\[\|f_i-\Ebb(f|\PP_j)\|_p\leq 3\epsilon+g(\epsilon)^{1/p}.\]
This implies
\[\|f_i-f\|_p\leq 4\epsilon+g(\epsilon)^{1/p}.\] Since $\lim_{\epsilon\to 0}g(\epsilon)=0$ the proof is complete.
\end{proof}

\begin{lemma}\label{pconv}
Consider a bi-Markov space $(I,J,\AA,\BB,\eta)$. Let $(\PP_i)_{i=1}^\infty$ and
$(\QQ_j)_{j=1}^\infty$ be exhausting partition sequences of $I$ and $J$,
respectively. Set $\eta_i=\Ebb_{\PP_i}\eta\Ebb_{\QQ_i}$. Then $s^{\eta_i}_k\to
s^{\eta}_k$ in $L^p$ as $i\to\infty$.
\end{lemma}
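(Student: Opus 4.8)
The plan is to verify the two hypotheses of Lemma~\ref{convcrit} on the probability space $(J^k,\BB^k,\rho^k)$, equipped with the partition sequence $(\QQ_i^k)_{i=1}^\infty$ --- which is exhausting for $\rho^k$ by Lemma~\ref{appendix2} --- taking $f_i=s^{\eta_i}_k$ and $f=s^\eta_k$. Here we assume $\eta$ is $(k,p)$-loose; this is the only relevant case, since if $\eta$ fails to be $(k,p)$-loose then $s^\eta_k$ is not in $L^p$ and, by Lemma~\ref{tripeq}, $\|s^{\eta_i}_k\|_p=\|\eta_i\|_{k,p}^k\to\infty$, so the asserted convergence cannot hold (for $p=1$ this reduces to the standing assumption that $\eta$ is $k$-loose, all $L^p$-norms below being read as $L^1$-norms). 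By Lemma~\ref{LEM:SKEW-MARGIN}, each $\eta_i=\Ebb_{\PP_i}\eta\Ebb_{\QQ_i}$ is represented by a bounded $1$-regular bigraphon (indeed a stepfunction), so $\eta_i$ is $k$-loose and $s^{\eta_i}_k$ is a bounded function, in particular lies in $L^p(\rho^k)$.

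Hypothesis~(2) of Lemma~\ref{convcrit} is immediate from what has been proved: $\|s^{\eta_i}_k\|_p=\|\eta_i\|_{k,p}^k=\|\Ebb_{\PP_i}\eta\Ebb_{\QQ_i}\|_{k,p}^k$, and by Lemma~\ref{tripeq} this tends to $\|\eta\|_{k,p}^k=\|s^\eta_k\|_p$ as $i\to\infty$. For hypothesis~(1) we must show, for every fixed $j$, that $\|\Ebb(s^{\eta_i}_k\mid\QQ_j^k)-\Ebb(s^\eta_k\mid\QQ_j^k)\|_1\to0$ as $i\to\infty$. As observed in the proof of Lemma~\ref{tripeq}, conditioning the density $s_k$ of a $k$-loose X-space on $\QQ_j^k$ coincides with stepping the space by $\QQ_j$ on its $J$-side, so $\Ebb(s^{\eta_i}_k\mid\QQ_j^k)=s^{\eta_i\Ebb_{\QQ_j}}_k$ and $\Ebb(s^\eta_k\mid\QQ_j^k)=s^{\eta\Ebb_{\QQ_j}}_k$. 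For $i\ge j$ the partition $\QQ_i$ refines $\QQ_j$, whence $\Ebb_{\QQ_i}\Ebb_{\QQ_j}=\Ebb_{\QQ_j}$ and therefore $\eta_i\Ebb_{\QQ_j}=\Ebb_{\PP_i}\eta\Ebb_{\QQ_j}$. Let $W_j$ be the bounded density of $\eta\Ebb_{\QQ_j}$ with respect to $\pi\times\rho$ (Lemma~\ref{LEM:SKEW-MARGIN}); then $\eta_i\Ebb_{\QQ_j}$ is represented by $W_{j,i}$, the $\PP_i$-stepping of $W_j$ in its first variable. These $W_{j,i}$ are uniformly bounded by $\|W_j\|_\infty$, and since $(\PP_i)$ is exhausting, $W_{j,i}\to W_j$ almost everywhere and in $L^1(\pi\times\rho)$ by the Martingale Convergence Theorem. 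Using that $s^{\eta_W}_k(y_1,\dots,y_k)=\intl_I\prod_{m=1}^k W(x,y_m)\,d\pi(x)$ for a bounded $1$-regular bigraphon $W$, together with a $k$-term telescoping estimate controlled by $\|W_j\|_\infty$, one obtains
\[
\|s^{\eta_i\Ebb_{\QQ_j}}_k-s^{\eta\Ebb_{\QQ_j}}_k\|_{L^1(\rho^k)}\le k\,\|W_j\|_\infty^{k-1}\,\|W_{j,i}-W_j\|_{L^1(\pi\times\rho)}\longrightarrow 0,
\]
which is precisely hypothesis~(1).

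Both hypotheses of Lemma~\ref{convcrit} being in place, we conclude $s^{\eta_i}_k\to s^\eta_k$ in $L^p(\rho^k)$, as claimed. The one genuinely new ingredient is the telescoping bound in the last display --- the stability of the star-density $s_k$ under $L^1$-perturbations of a uniformly bounded bigraphon --- and I expect this step, together with correctly tracking that $J$-side stepping commutes with forming $s_k$ (so that conditional expectation onto $\QQ_j^k$ becomes a stepping of the X-space), to be the only real obstacle; everything else is an assembly of Lemmas~\ref{convcrit}, \ref{tripeq}, \ref{appendix2} and~\ref{LEM:SKEW-MARGIN} with standard martingale convergence.
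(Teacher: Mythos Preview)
Your proof is correct and follows essentially the same route as the paper's: verify the two hypotheses of Lemma~\ref{convcrit} on $(J^k,\rho^k)$ with the partition sequence $(\QQ_j^k)$, getting hypothesis~(2) from Lemma~\ref{tripeq} and hypothesis~(1) from the identity $\Ebb(s^{\eta}_k\mid\QQ_j^k)=s^{\eta\Ebb_{\QQ_j}}_k$ together with the telescoping bound $\|s^{W_{j,i}}_k-s^{W_j}_k\|_1\le k\|W_j\|_\infty^{k-1}\|W_{j,i}-W_j\|_1$ for the uniformly bounded bigraphons $W_{j,i}=\Ebb_{\PP_i}W_j$. Your invocation of Lemma~\ref{tripeq} for the norm convergence is in fact cleaner than the paper's, which routes through Theorem~\ref{quadeq} (stated only for symmetric Markov spaces) at that step.
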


\begin{proof} \daku{We have
\[
\lim_{i\to\infty}\|s^{\eta_i}_k\|_p
=\lim_{i\to\infty}\|\eta_i\|_{p,k}^k=
\|\eta\|_{k,p}^k=\|s^\eta_k\|_p,
\]
where the second equality is from Lemma \ref{tripeq}}
and the remaining
equalities are just definitions. Now according to Lemma \ref{convcrit} it
suffices to prove that for every $j\in\mathbb{N}$ we have
\[
\lim_{i\to\infty}\Ebb(s^{\eta_i}_k|\QQ_j^k)=\Ebb(s^\eta_k|\QQ_j^k)
\]
in $L_1$.
To see this observe that
\[
\Ebb(s^{\eta_i}_k|\QQ_j^k)=s^{W_{i,j}}_k~~~{\rm and}~~~\Ebb(s^\eta_k|\QQ_j^k)=s^{W_j}_k,
\]
where
\[
W_{i,j}:=\Ebb_{\PP_i}\eta\Ebb_{\QQ_i}\Ebb_{\QQ_j}~~~{\rm
and}~~~W_j:=\eta\Ebb_{\QQ_j}.
\]
If $i\geq j$ then $\Ebb_{\QQ_i}\Ebb_{\QQ_j}=\Ebb_{\QQ_j}$ and so
$W_{i,j}=\Ebb_{\PP_i}W_j$. Since for fixed $j$ we have that $W_{i,j}$ is a
uniformly bounded sequence of measurable functions with $L_1$ limit $W_j$ the
integral form of $s^{W_{i,j}}_k$ and $s^{W_j}_k$ shows the required
convergence. More precisely, by abusing the notation, let us identify $W_{i,j}$
and $W_j$ with their representations by measurable functions. Then we have
\[
s^{W_{i,j}}_k(z_1,z_2,\dots,z_k)=\mathbb{E}_x S_{i,j}(x,z_1,z_2,\dots,z_k)
\]
and
\[
s^{W_j}_k(z_1,z_2,\dots,z_k)=\mathbb{E}_x S_j(x,z_1,z_2,\dots,z_k),
\]
where
\[
S_{i,j}(x,z_1,x_2,\dots,z_k):=W_{i,j}(x,z_1)W_{i,j}(x,z_2)\cdots W_{i,j}(x,z_k)
\]
and
\[
S_j(x,z_1,x_2,\dots,z_k):=W_j(x,z_1)W_j(x,z_2)\cdots W_j(x,z_k).
\]
Then
\begin{align*}
\|s^{W_{i,j}}_k-s^{W_j}_k\|_1&=\|\mathbb{E}_x(S_{i,j}-S_j)\|_1
\leq\|\mathbb{E}_x(|S_{i,j}-S_j|)\|_1\\
&=\|S_{i,j}-S_j\|_1\leq k\|W_{i,j}-W_j\|_1\|W_j\|_\infty^{k-1},
\end{align*}
where the last inequality follows by changing the terms in the product one by
one using the usual telescopic argument and the fact that
$\|W_{i,j}\|_\infty\leq\|W_j\|_\infty$. The fact that $W_{i,j}$ converges to
$W_j$ in $L_1$ completes the proof.
\end{proof}

Now we are ready to state and prove our main theorem in this section.

\begin{theorem}\label{THM:KP-BIP}
Let $(J,\BB,\eta)$ be \lljav{a} $(k,p)$-loose Markov space, and let $G=(U,W,E)$
be a bigraph such that $\deg(w)\le k$ for all $w\in W$ and $\deg(u)\le p$ for
all $u\in U$. Then $t(G,\eta)<\infty$, and for every exhausting partition
sequence $(\PP_i)_{i=1}^\infty$, we have
\[
t(G,\eta)=\lim_{n\to \infty} t(G,\eta_{\PP_n}).
\]
\end{theorem}

\begin{proof}
The proof is a consequence of Lemma \ref{pconv} and Theorem \ref{THM:APP2}. Let
$(\PP_i)_{i=1}^\infty$ be an exhausting partition sequence of $J$. Let
$W_i:=\Ebb_{\PP_i}\eta\Ebb_{\PP_i}$. Then by Lemma \ref{pconv} we have that
$s^{W_i}_k$ converges to $s^{\eta}_k$ in $L^p$ as $i\to\infty$. Thus $\eta$ is
the $(k,p)$-limit of the sequence of the $1$-regular graphons
$\{W_i\}_{i=1}^\infty$. Theorem \ref{THM:APP2} completes the proof.
\end{proof}

Note that a bi-Markov space version of Theorem \ref{THM:APP2} gives a bi-Markov space
generalization of Theorem \ref{THM:KP-BIP} is a similar way.

\begin{theorem}\label{THM:X-DENSITY}
\lljav{Let $\Mb=(I,J,\AA,\BB,\eta)$ be a bi-Markov space $(k,p)$-loose from $J$. Let
$(\PP_i)_{i=1}^\infty$ and $\{\QQ_j\}_{j=1}^\infty$ be exhausting partition
sequences of $I$ and $J$, respectively.  Let $G=(U,W,E)$ be a bigraph such that
$\deg(w)\le a$ for all $w\in W$ and $\deg(u)\le b$ for all $u\in U$. Then
$t(G,\eta)<\infty$, and
\[
t(G,\eta)=\lim_{i,j\to \infty} t(G,\Ebb_{\PP_i}\eta\Ebb_{\QQ_j}).
\]}
\end{theorem}

\subsection{Partition approximation of homomorphism measures}\label{subsect:thm1.4}

In this section we investigate an alternative approach to homomorphism measures
using finite partitions $\PP=\{P_1,P_2,\dots,P_k\}$ of the ground space,
approximating $\eta$ by the projections $\eta_\PP$ as in the previous section.
As before, the measure $\eta_\PP$ is defined by a graphon, and hence the
measures $\eta_\PP^G$ are defined (see Section \ref{SEC:UNBOUNDED}). It is
natural to define homomorphism measures $\eta^G$ as limits of homomorphism
measures $\eta_{\PP_i}^G$ for an exhausting partition sequence
$(\PP_i)_{i=1}^\infty$. This requires an appropriate convergence notion for
such measures. There are several notions of convergence we can use: strong
(pointwise) convergence; convergence in total variation norm; weak convergence
(after putting a compact topology on $J$) etc. We choose a more technical but
more convenient path, requiring convergence on sets in $\PP_i^V$, where
$\PP_i^V$ is the partition of $J^V$ whose elements are boxes of the form
$\prod_{v \in V} P_v$ where $P_v\in\PP_i$.

The measure $\eta^G$, defined in \eqref{EQ:ETA-X}, can be expressed as follows:
Let $A=\prod_{u\in U} A_u$ and $B=\prod_{w\in W} B_w$, where $A_u,B_w\in \BB$.
Then
\begin{equation}\label{EQ:ETA-BOX}
\eta^G(A\times B) = \intl_A \prod_{w\in W} \psi_{x_{N(w)}}(B_w)\,d\pi^U(x).
\end{equation}
A simple but important remark is that changing an $A_u$ or a $B_w$ on a set of
$\pi$-measure zero, the value $\eta^G(A\times B)$ is not changed. This is
trivial for the $A_u$, and follows by Lemma \ref{LEM:PSI-NULL} for $B_w$.

\begin{theorem}\label{THM:PARMEX}
Let $(J,\BB,\eta)$ be a $(k,p)$-loose Markov space, and let $G=(U,W,E)$ be a
bigraph such that $\deg(w)\le k$ for all $w\in W$ and $\deg(u)\le p$ for all
$u\in U$. Then $\eta^G$ is partition approximable.
\end{theorem}

\begin{proof}
Let $(\PP_i)_{i=1}^\infty$ be an exhausting partition sequence. We want to
prove that $\eta_{\PP_i}^G(C)\to\eta^G(C)$ for every Borel box $C=\prod_{v\in
V} B_v$ as $i\to\infty$. First we prove the assertion in a special case.

\begin{claim}\label{CLAIM:P1}
Suppose that $B_v\in\PP_j$ for some $j$ and all $v\in V$. Then
$\eta_{\PP_i}^G(C)\to\eta^G(C)$.
\end{claim}

We may restrict our attention to $i\ge j$. We may assume that the partition
sequence $(\PP_i)_{i=1}^\infty$ is generating, not only exhausting; by Lemma
\ref{LEM:EXHAUST}, this can be achieved by changing each partition class on a
set of measure zero.

We want to mimic the proof of Theorem \ref{THM:KP-BIP}, which is a related
assertion for the total measure $\eta^G(J^V)$. To this end, we express
homomorphism measures in terms of homomorphism densities of certain bi-Markov spaces.

For a Markov space $\eta$ and $B\in\BB$ with $\pi(B)>0$, we introduce a bi-Markov space $X(\eta,B)$ which is basically the restriction of $\eta$ to $J\times
B$. Since $\eta(J\times B)=\pi(B)$, we have to multiply the restriction of
$\eta$ with $\pi(B)^{-1}$ to obtain a proper bi-Markov space $(J,B,\eta|_{J\times B}/\pi(B))$, where $\eta|_{J\times B}(A):=\eta((J\times B)\cap A)$. It is
clear from the definition that if $\eta$ is $k$-loose then
$\pi(B)s_k^{X(\eta,B)}\le s_k^\eta$ almost surely on $J^k$. It follows that if
$\eta$ is $(k,p)$-loose then so is $X(\eta,B)$ for any subset $B\in\BB$ with
positive measure.

Let $(J,\BB,\eta)$ be a $(k,p)$-loose Markov space, and let $G=(U,W,E)$ be a
bigraph such that $\deg(w)\le k$ for all $w\in W$ and $\deg(u)\le p$ for all
$u\in U$. Let $A=\prod_{u\in U} B_u$ and $B=\prod_{w\in W} B_w$. Then
\begin{align*}
\eta^G(C) &= \eta^G(A\times B) = \intl_A \prod_{w\in W} \psi_{x_{N(w)}}(B_w)\,d\pi^U(x)\\
&= \intl_A \prod_{w\in W}\pi(B_w)s^{X(\eta,B_w)}_{\deg(w)}(x_{N(w)})~d\pi^U.
\end{align*}
For $w\in W$, let $\PP_{i,w}$ denote the restriction of $\PP_i$ to $B_w$.
Define
\[
X_{i,w}:=X(\eta_{\PP_i},B_w)=\mathbb{E}_{\PP_{i}}X(\eta,B_w)\mathbb{E}_{\PP_i,w},
\]
then
\[
\eta_{\PP_i}^G(C)=\intl_A \prod_{w\in W}\pi(B_w)s^{X_{i,w}}_{\deg(w)}(x_{N(w)})~d\pi^U(x).
\]
Lemma \ref{pconv} shows that
\[
\lim_{i\to\infty}s^{X_{i,w}}_{\deg(w)}(x_{N(w)})=s^{X(\eta,B_w)}_{\deg(w)}(x_{N(w)}),
\]
where convergence is in $L^p$. This completes the proof of Claim \ref{CLAIM:P1}
by Corollary \ref{cor:Finner}.

\medskip

Note that this Claim implies immediately that the same conclusion holds if
$B_v\in \wh\PP_i$ for all $v$, since such a box is a finite union of boxes in
$\PP_i^{U\cup W}$.

\begin{claim}\label{CLAIM:P2}
For every $\eps>0$ there is a $\delta>0$ and an $i_0\in\Nbb$ such that
\begin{equation}\label{EQ:P2}
\eta^G(X\times J^{V\setminus v})<\eps\qquad\text{and}\qquad \eta_{\PP_i}^G(X\times
J^{V\setminus v})<\eps
\end{equation}
for every $v\in V$, every $X\in\BB$ with $\pi(X)<\delta$, and every $i\ge i_0$.
\end{claim}

The first inequality (which is independent of $i$) is just a restatement of the
absolute continuity of the marginal $(\eta^G)^v$ with respect to $\pi$ (Lemma
\ref{LEM:W-MARGIN}). To prove the second, choose $\delta$ such that
$\eta^G(X\times J^{V\setminus v})<\eps/2$ for $\pi(X)<2\delta$. Let $Y\in\BB$
be a set with $\pi(Y)\le \delta$ maximizing $\eta^G_{\PP_i}(Y\times
J^{V\setminus v})$. We may assume that every partition class of $\PP_i$ has
$\pi$-measure at most $\delta$. Since the marginal $(\eta^G_{\PP_i})^v$ is
proportional to $\pi$ on every partition class of $\PP_i$, the maximizing $Y$
will consist of the union of at least one partition class and at most one
subset of a partition class. So there is a set $Z\in\wh\PP_i$ such that
$Y\subseteq Z$ and $\pi(Z)\le 2\delta$. Then
\begin{align*}
(\eta^G_{\PP_i})^v(X) \le (\eta^G_{\PP_i})^v(Y) \le (\eta^G_{\PP_i})^v(Z) = \eta^G_{\PP_i}(Z\times J^{V\setminus v}).
\end{align*}
Here the box $Z\times J^{V\setminus v}$ is the product of sets in the set
algebra $\wh\PP_i$, and so by Claim \ref{CLAIM:P1},
\[
\eta^G_{\PP_i}(Z\times J^{V\setminus v})\le \eta^G(Z\times J^{V\setminus v}) + \frac\eps2 \le \eps.
\]
if $i$ is large enough. Choosing $i_0$ so that if $i\ge i_0$, then this holds
for all $v$, completes the proof of Claim \ref{CLAIM:P2}.

\medskip

To complete the proof, let $C=\prod_{v\in V} B_v$ be any box with $B_v\in\BB$.
Lemma \ref{LEM:EXHAUST} implies that there are sets $\overline{B}_v\in\wh\PP_i$
for a sufficiently large $i$ such that $\pi(B_v\triangle
\overline{B}_v)\le\delta$ for all $v\in V$, where $\delta$ is chosen as in
Claim \ref{CLAIM:P2}. Let $\overline{C}=\prod_{v\in V} \overline{B}_v$. By
Claim \ref{CLAIM:P1},
\[
\big|\eta_{\PP_i}^G(\overline{C}) - \eta^G(\overline{C})\big| \le \eps
\]
if $i$ is large enough. Furthermore,
\[
C\triangle\overline{C}\subseteq \bigcup_{v\in V}  (B_v\triangle
\overline{B}_v)\times J^{V\setminus v},
\]
and so by Claim \ref{CLAIM:P2},
\begin{align*}
\big|\eta_{\PP_i}^G(\overline{C}) - \eta_{\PP_i}^G(C)\big|
\le \sum_{v\in V} \eta_{\PP_i}^G\big((B_v\triangle \overline{B}_v)\times J^{V\setminus v})\big)\le |V|\eps,
\end{align*}
and similarly
\begin{align*}
\big|\eta^G(\overline{C}) - \eta^G(C)\big|
\le \sum_{v\in V} \eta^G\big((B_v\triangle \overline{B}_v)\times J^{V\setminus v})\big)\le |V|\eps.
\end{align*}
Summing up,
\begin{align*}
&\big|\eta_{\PP_i}^G(C) - \eta^G(C)\big| \\
&\le \big|\eta_{\PP_i}^G(\overline{C}) - \eta_{\PP_i}^G(C)\big|
+ \big|\eta_{\PP_i}^G(\overline{C}) - \eta^G(\overline{C})\big| + \big|\eta^G(\overline{C}) - \eta^G(C)\big|\\
&\le (2|V|+1)\eps.
\end{align*}
This proves the Theorem.
\end{proof}

\begin{corollary}\label{COR:ETA-STAR}
Let $(J,\BB,\eta)$ be a $(k,p)$-loose Markov space. Let $G=(U,W,E)$ be a
bigraph such that $\deg(w)\le k$ for all $w\in W$ and $\deg(u)\le p$ for all
$u\in U$. Then $\eta^G=\eta^{G^*}$.
\end{corollary}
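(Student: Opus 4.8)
The plan is to deduce the identity from partition approximation, exploiting that for a graphon the choice of bipartition is irrelevant.

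First I would make sure that $\eta^{G^*}$ is even defined and that Theorem~\ref{THM:PARMEX} applies to it as well as to $G$. By Theorem~\ref{quadeq}, $(k,p)$-looseness of a Markov space is symmetric in the two parameters, so $\eta$ is simultaneously $(k,p)$-loose and $(p,k)$-loose; in particular it is both $k$-loose and $p$-loose, so the construction \eqref{EQ:PI-PSI} produces both $\eta^G$ (stars centered at the $W$-vertices, of degree $\le k$) and, viewing $G^*=(W,U,E^*)$, the measure $\eta^{G^*}$ (stars centered at the $U$-vertices, of degree $\le p$), and by Theorem~\ref{THM:DENSITY} both are finite measures on $J^V$, $V=U\cup W$ (after the obvious identification of coordinates). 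Theorem~\ref{THM:PARMEX} applied to the bigraph $G$ with parameters $(k,p)$, and to the bigraph $G^*$ with parameters $(p,k)$ — this last being legitimate precisely because of the symmetry just invoked — shows that $\eta^G$ and $\eta^{G^*}$ are both partition approximable.

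Next I would fix an exhausting partition sequence $(\PP_i)_{i=1}^\infty$ and compare the two families of approximants. Each $\eta_{\PP_i}$ is induced by the $1$-regular graphon $W_i:=W_{\eta_{\PP_i}}$, and for a graphon the homomorphism measure of any finite graph $H$ is $W_i^{H}\cdot\pi^V$ (Section~\ref{SEC:UNBOUNDED}, Theorem~\ref{THM:MU-ETA}), with no dependence on a bipartition or on the chosen sequential tree decomposition. Hence $\eta_{\PP_i}^G=W_i^G\cdot\pi^V$ and $\eta_{\PP_i}^{G^*}=W_i^{G^*}\cdot\pi^V$; but $W_i$ is symmetric and $G$, $G^*$ carry the same edge set, so $W_i^G(x)=\prod_{ij\in E(G)}W_i(x_i,x_j)=W_i^{G^*}(x)$ for every $x\in J^V$, i.e.\ $\eta_{\PP_i}^G=\eta_{\PP_i}^{G^*}$ as measures, for every $i$. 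Passing to the limit on boxes, for every box $C=\prod_{v\in V}B_v$ ($B_v\in\BB$) we get $\eta^G(C)=\lim_i\eta_{\PP_i}^G(C)=\lim_i\eta_{\PP_i}^{G^*}(C)=\eta^{G^*}(C)$; since the boxes form a $\pi$-system generating $\BB^V$ and both $\eta^G$, $\eta^{G^*}$ are finite measures, this forces $\eta^G=\eta^{G^*}$.

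The only genuinely load-bearing input beyond bookkeeping is the symmetry of $(k,p)$-looseness (Theorem~\ref{quadeq}): it is what lets $G^*$, whose degree bounds sit ``on the wrong sides'' relative to $\eta$ being $(k,p)$-loose, be fed into Theorem~\ref{THM:PARMEX} with the two bounds exchanged, so matching up the hypotheses is the step to be careful about; the rest (finiteness of $\eta^G$, the graphon computation $W_i^G=W_i^{G^*}$, the $\pi$-system uniqueness) is routine. I would also note that the conclusion can be obtained without partition approximation: since $G$ is bipartite, hence triangle-free, and every complete bipartite subgraph $K_{s,t}$ of $G$ has $s\le k$ and $t\le p$ and is therefore well-measured in $\eta$ (Theorem~\ref{THM:DENSITY}), Theorem~\ref{THM:MAIN-KAB} gives a unique measure family for $G$, and the two star-decomposition constructions (one starting from the $U$-side, one from the $W$-side) are exactly two of the orderings identified in its proof, hence yield the same measure.
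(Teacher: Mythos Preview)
Your main argument is correct and follows the same route as the paper: fix a partition sequence, observe that for the graphon $\eta_{\PP_i}$ one has $\eta_{\PP_i}^G=\eta_{\PP_i}^{G^*}$, invoke Theorem~\ref{THM:PARMEX} for both $G$ and $G^*$, and conclude from agreement on boxes. You are in fact more careful than the paper in spelling out that Theorem~\ref{quadeq} is needed to flip $(k,p)$ to $(p,k)$ before Theorem~\ref{THM:PARMEX} can be applied to $G^*$; the paper leaves this implicit.

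One caution about your closing remark: the alternative route through Theorem~\ref{THM:MAIN-KAB} requires knowing that every $K_{s,t}\subseteq G$ is well-measured, and in the paper's logical order that fact (Corollary~\ref{COR:KAB-WELL}) is deduced \emph{from} Corollary~\ref{COR:ETA-STAR}, so invoking it here would be circular. Your citation of Theorem~\ref{THM:DENSITY} does not escape this, since the ``well-measured'' part of Theorem~\ref{THM:DENSITY} is itself established via Corollary~\ref{COR:G-WELL}, which sits downstream of the present corollary. The partition-approximation argument is the non-circular one.
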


\begin{proof}
Choose an generating partition sequence $(\PP_i)_{i=1}^\infty$. Then
$\eta_{\PP_i}$ is a graphon, and so  $\eta_{\PP_i}^{G^*}=\eta_{\PP_i}^G$. By
Theorem \ref{THM:PARMEX}, we have
\[
\eta^G(C)=\lim_{i\to\infty}\eta_{\PP_i}^G(C)=\lim_{i\to\infty} \eta_{\PP_i}^{G^*}(C) = \eta^{G^*}(C)
\]
for every box $C\in\PP_j^V$. Since the sigma-algebra generated by such sets
contains all Borel sets, it follows that $\eta^G = \eta^{G^*}$.
\end{proof}

\begin{corollary}\label{COR:KAB-WELL}
If $\Mb$ is an $(a,b)$-loose Markov space, then $K_{a,b}$ is well-measured in
$M$.
\end{corollary}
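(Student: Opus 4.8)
The plan is to construct the family $(\eta^{K_{a,b}[S]}:~S\subseteq V)$, with $V=U\cup W$, $|U|=a$, $|W|=b$, from the sequential star–decomposition construction of Section \ref{SSEC:MAIN-CONSTRUCT}, following the proof of Theorem \ref{THM:MAIN-KAB} almost verbatim. That proof runs into circularity for complete bipartite graphs only at one point — condition (ii) of Lemma \ref{LEM:REORDER} applied to a prefix equal to the whole vertex set — and exactly there we substitute Corollary \ref{COR:ETA-STAR}, which gives $\eta^{K_{a,b}}=\eta^{(K_{a,b})^*}$ directly. I would argue by induction on $a+b$: the base case $K_{1,1}=K_2$ is immediate since $\eta^{K_2}=\eta$ and $\eta^{K_1}=\pi$. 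Throughout I use that $(a,b)$-looseness descends to $(a',b')$-looseness whenever $a'\le a$ and $b'\le b$ (from Theorem \ref{quadeq}, together with the fact that conditioning — averaging out variables — cannot increase an $L^p$-norm), so that every induced subgraph $K_{a,b}[S]=K_{a',b'}$ inherits the hypothesis, and Corollary \ref{COR:ETA-STAR} applies to it.

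For a permutation $p$ of $V$ let $\eta_p$ denote the star–decomposition measure defined by \eqref{EQ:RECURS0}. First one checks that every such decomposition is \emph{smooth} in the sense of Definition \ref{DEF:SMOOTH}: the leaf set of the star centred at $v_i$ lies entirely in the $U$-part or entirely in the $W$-part of the complete bipartite graph already built, and by Lemma \ref{LEM:W-MARGIN} (applied to that graph, and to its reverse via Corollary \ref{COR:ETA-STAR} when the leaf set lies in the $W$-part) the relevant marginal is absolutely continuous with respect to $\pi$ on that leaf set. Then one applies Lemma \ref{LEM:REORDER} to $F(p)=\eta_p$: condition (i) is trivial; for condition (ii), a prefix inducing a complete bipartite $K_{a_1,b_1}$ with $a_1+b_1<a+b$ yields an order-independent sub-measure by the induction hypothesis (in particular one invariant under swapping its two colour classes), while the case $a_1+b_1=a+b$ is precisely $\eta^{K_{a,b}}=\eta^{(K_{a,b})^*}$ from Corollary \ref{COR:ETA-STAR}. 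Hence $\eta_p$ does not depend on $p$, so $\eta^{K_{a,b}[S]}$ is well defined for every $S$ (agreeing with the inductively built family when $S\subsetneq V$).

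It remains to verify the three axioms. \emph{Normalization} is the direct computation $\eta^{K_{1,1}}=\eta$, $\eta^{K_1}=\pi$ from \eqref{EQ:ETA-X}. For the \emph{decreasing} property, given $S\subseteq T$, run the star decomposition of $K_{a,b}[T]$ with an ordering that lists the vertices of $S$ first; its first $|S|$ stars build $\eta^{K_{a,b}[S]}$, and iterating \eqref{EQ:FF-ABS-CONT} gives $(\eta^{K_{a,b}[T]})^S\ll\eta^{K_{a,b}[S]}$. For the \emph{Markov} property, given $S=P\cap Q$ with $P\cup Q=V$ and no edge between $P\setminus S$ and $Q\setminus S$, order $V$ as $S$, then $P\setminus S$, then $Q\setminus S$; since no vertex of $Q\setminus S$ has a neighbour in $P\setminus S$, each star built for a vertex of $Q\setminus S$ has all its leaves in $S$ (or among earlier vertices of $Q\setminus S$), so by \eqref{EQ:RECURS0} the disintegration of $\eta^{K_{a,b}}$ over $\eta^{K_{a,b}[P]}$ depends on its argument only through the $S$-coordinates, and symmetrically with $P$ and $Q$ interchanged; by Lemma \ref{LEM:MARKOV-YDEP} this yields \eqref{EQ:NU-INDEP}, exactly as in the last paragraph of the proof of Theorem \ref{THM:MAIN-KAB}. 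Finiteness of the family follows from Theorem \ref{THM:DENSITY}.

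The main obstacle is the order-independence step: one must make sure that Corollary \ref{COR:ETA-STAR}, which only equates the two ``monochromatic-block-first'' orderings, really does bootstrap through Lemma \ref{LEM:REORDER} to independence from \emph{all} orderings, and that every star decomposition that arises is smooth — the latter being where the $(a,b)$-looseness of $\eta$ is genuinely used, via Lemma \ref{LEM:W-MARGIN} and the passage to the reverse space. By comparison, the verification of the three axioms is a routine reprise of the corresponding steps in the proof of Theorem \ref{THM:MAIN-KAB}; compare also Remark \ref{REM:KAB}.
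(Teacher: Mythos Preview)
Your proposal is correct and follows essentially the same approach as the paper: induction on $a+b$, reducing order-independence to the single case of swapping the two colour classes (via the argument of Lemma \ref{LEM:REORDER}), and invoking Corollary \ref{COR:ETA-STAR} precisely there. The paper's proof is terser---it phrases the reduction as ``$\eta_p=\eta_q$ or $\eta_p=\eta_r$ depending on the colour of the last node''---and leaves the smoothness check and the verification of the decreasing and Markovian axioms implicit (as a reprise of Theorem \ref{THM:MAIN-KAB}), whereas you spell these out, including the nice observation that Lemma \ref{LEM:W-MARGIN} combined with Corollary \ref{COR:ETA-STAR} on the smaller $K_{a',b'}$ handles smoothness for leaf sets lying in either colour class.
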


\begin{proof}
Let $p$ be any ordering of $V(K_{a,b})$. Let $\{u_1,\dots,u_a\}$ and
$\{v_1,\dots,v_b\}$ be the color classes of $K_{a,b}$, and let
$q=(u_1,\dots,u_a,v_1,\dots,v_b)$ and $r=(v_1,\dots,v_b,u_1,\dots,u_a)$.
Similarly as in the proof of Lemma \ref{LEM:REORDER}, we may assume that
$\eta_p$ does not change if we reorder the first $a+b-1$ elements, and it does
not change if we flip consecutive non-adjacent nodes, so it follows that
$\eta_p=\eta_q$ or $\eta_p=\eta_r$ (depending on the color class of the last
node in $p$). But by Corollary \ref{COR:ETA-STAR},
$\eta^{K_{a,b}}=\eta^{K_{b,a}}$ and so $\eta_q=\eta_r$. Thus $\eta_p$ is
independent of $p$.
\end{proof}

Combining Corollary \ref{COR:KAB-WELL} with Theorem \ref{THM:MAIN-KAB}, we
obtain the following:

\begin{corollary}\label{COR:G-WELL}
Let $(J,\BB,\eta)$ be a $(k,p)$-loose Markov space. Let $G=(U,W,E)$ be a
bigraph such that $\deg(w)\le k$ for all $w\in W$ and $\deg(u)\le p$ for all
$u\in U$. Then $G$ is well-measured in $\Mb$.
\end{corollary}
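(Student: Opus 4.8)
The plan is to derive this directly from Theorem~\ref{THM:MAIN-KAB} together with Corollary~\ref{COR:KAB-WELL}. Since $G$ is a bigraph, its underlying undirected graph is bipartite and hence triangle-free, so Theorem~\ref{THM:MAIN-KAB} applies as soon as we know that every complete bipartite subgraph of $G$ is well-measured in $\Mb$. The first step is a routine degree count: if $K$ is a complete bipartite subgraph of $G$ meeting $U$ in a set $A$ of size $a$ and $W$ in a set $B$ of size $b$, then inside $K$ every vertex of $A$ has degree $b$ and every vertex of $B$ has degree $a$; since $K\subseteq G$, this forces $b\le\deg(u)\le p$ for $u\in A$ and $a\le\deg(w)\le k$ for $w\in B$, so $K\cong K_{a,b}$ with $a\le k$ and $b\le p$.

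The second step is to check that $\Mb$ is $(a,b)$-loose, so that Corollary~\ref{COR:KAB-WELL} gives that $K_{a,b}$ is well-measured in $\Mb$. Here I would use monotonicity of looseness in both parameters: $\sigma_a$ is a marginal of $\sigma_k$ (equivalently, the star $S_a$ is a subtree of $S_k$), so $a$-looseness follows from $k$-looseness, and $s^\eta_a$ is obtained from $s^\eta_k$ by integrating out $k-a$ of its variables --- a conditional expectation, hence an $L^p$-contraction --- so $\|s^\eta_a\|_p\le\|s^\eta_k\|_p<\infty$ and $\Mb$ is $(a,p)$-loose; since $\pi^a$ is a probability measure and $b\le p$, we then also have $\|s^\eta_a\|_b\le\|s^\eta_a\|_p<\infty$, so $\Mb$ is $(a,b)$-loose. (Alternatively one can route this through the symmetry $(k,p)\leftrightarrow(p,k)$ supplied by Theorem~\ref{quadeq}.)

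Putting the pieces together: every complete bipartite subgraph of $G$ has the form $K_{a,b}$ with $a\le k$ and $b\le p$, hence is well-measured in $\Mb$ by Step~2, while $G$ itself is triangle-free; so Theorem~\ref{THM:MAIN-KAB} yields that $G$ is well-measured in $\Mb$. I do not expect a genuine obstacle here: all the substance already sits in Theorem~\ref{THM:MAIN-KAB} and in the partition-approximation arguments behind Corollary~\ref{COR:KAB-WELL}. The only points needing a little care are keeping track of which bipartition class of a complete bipartite subgraph lies in $U$ and which in $W$, and the elementary monotonicity of $(k,p)$-looseness invoked in Step~2.
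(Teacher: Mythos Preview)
Your proposal is correct and follows precisely the route the paper indicates: the paper's entire proof is the one-line remark ``Combining Corollary~\ref{COR:KAB-WELL} with Theorem~\ref{THM:MAIN-KAB}, we obtain the following.'' You have simply unpacked this combination, supplying the degree count that bounds the sizes of complete bipartite subgraphs and the monotonicity of $(k,p)$-looseness (which the paper records explicitly only later, in Problem~\ref{REM:LL-ETA}).
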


This corollary implies Theorem \ref{THM:DENSITY}.

\subsection{Products of graphs}\label{prodgraphs}

In this section we investigate an interesting construction of a sparse graph
sequence, where the limit object is easily guessed, but it is more difficult to
tell in what sense do these graphs converge to this limit.

For two edge-weighted graphs $H_1$ and $H_2$, we define their product
$H_1\times H_2$ as the edge-weighted graph on $V(H_1)\times V(H_2)$, where the
edge-weight $w$ in the product is defined by
\[
w((x_1,x_2),(y_1,y_2))=w_1(x_1,y_1)w_2(x_2,y_2).
\]
If every edge weight in $H_i$ is $1/(2|E(H_i)|)$, then this is just the
categorical product of the two graphs, with the edges weighted analogously.

Let $H_n=(V_n,E_n)$, $n=1,2,\dots$ be simple graphs, and let $p_n=|V(H_n)|$,
$q_n=|E(H_n)|$. Define
\[
\wh{H}_n= H_1\times\dots\times H_n.
\]
We can also define the product of infinitely many graphs. Indeed, let $J=V_1\times
V_2\times\cdots$, with the Borel $\sigma$-algebra $\BB$. There is a natural
graph on $J$, in which $(u_1,u_2,\dots)$ is connected to $(v_1,v_2,\dots)$ if
and only if each $u_i$ is connected to $v_i$ in $H_i$ for every $i\in\Nbb$. We
need to define a measure on this edge set. A Markov step from a point
$(v_1,v_2,\dots)\in J$ is obtained by making a step of the random walk on $H_i$
from $v_i$, independently for different indices $i$. The measure of a cylinder
set $C=A_1\times\cdots \times A_n\times E_{n+1}\times\cdots$ $(A_i\subset V_i^2)$ is
\[
\eta(C)=
  \begin{cases}
    \prod_{j=1}^n \frac{|A_j|}{|E_j|}, & \text{if $A_j\subseteq E_j$ for all $1\le j\le n$}, \\
    0 & \text{otherwise}.
  \end{cases}
\]
We denote this Markov space by $H_\infty=(J,\BB,\eta)$. Let $\Ab_\infty$ denote
the adjacency operator of $H_\infty$.

In this section we study the question whether $\wh{H}_n\to H_\infty$ in any
reasonable sense.

Let $\lambda^{(n)}_1=1,\lambda^{(n)}_2,\dots$ be the eigenvalues of the
transition matrix of the random walk on $H_n$, with corresponding eigenvectors
$w^{(n)}_1,w^{(n)}_2,\dots$. For every choice of indices $1\le i_j\le p_j$, the
transition matrix of the graph $\wh{H}_n$ has an eigenfunction
\begin{equation}\label{EQ:EIGENF-HN}
f_{i_1\dots i_n}(u_1,u_2,\dots,u_n) = \prod_{j=1}^n w^{(j)}_{i_j,u_j}
\end{equation}
with eigenvalue
\begin{equation}\label{EQ:EIGENV-HN}
\lambda_{i_1\dots i_n}=\prod_{j=1}^n \lambda_{i_j}^{(j)}.
\end{equation}
These eigenvalues remain eigenvalues in $H_\infty$, and so do the corresponding
eigenfunctions, if we consider them as defined on $V(H_\infty)$ but depending
only on the first $n$ coordinates. We can also think of this as extending the
formulas \eqref{EQ:EIGENF-HN} and \eqref{EQ:EIGENV-HN} to infinite products,
but choosing the eigenvalue $1$ with eigenfunction identically $1$ for all
$j>n$. Let us call these eigenvalues {\it finitary}.

We may or may not obtain further nonzero eigenvalues as infinite products with
infinitely many nontrivial eigenvalues. This will not happen if and only if the
transition matrices of the graphs $H$ have a common eigenvalue gap in the sense
that for some $c>0$,
\begin{equation}\label{EQ:MU-BOUND}
\mu_n=\max_{j\ge 2}|\lambda^{(n)}_j|\le 1-c
\end{equation}
for every $n$.

Trivially, the multiplicity of a nonzero finitary eigenvalue may be infinite,
and these eigenvalues may have accumulation points other than $0$. It is easy
to see that the eigenvalues have no nonzero accumulation point if and only if
\begin{equation}\label{EQ:MU-CONV}
\mu_n\to 0\qquad (n\to\infty).
\end{equation}

The Markov space $H_\infty$ has a natural partition $\PP_n$ defined by the
first $n$ coordinates. More exactly, $\PP_n$ has partition classes $U_z$ $(z\in
V_1\times\dots\times V_n)$, consisting of all extensions of $z$. Then
$(H_\infty)_{\PP_n}$ is the graphon associated with the graph $\wh{H}_n$, with
edge weights $1/(q_1\cdots q_n)$. Let $G$ be a graph with $a$ nodes and $b$
edges, then
\[
t(G,(H_\infty)_{\PP_n}) = t^*(G,\wh{H}_n)= \prod_{j=1}^n t^*(G,H_j) =
\prod_{j=1}^n \frac{\hom(G,H_j)p_j^{2b-a}}{(2q_j)^b}.
\]
Let us define
\begin{equation}\label{EQ:PROD-DEF}
t^{\times}(G,H_\infty) = \prod_{j=1}^\infty t^*(G,H_j),
\end{equation}
provided the product is convergent. With this definition,
\[
t^*(G,(H_\infty)_{\PP_n}) = t^*(G, \wh{H}_n) \to t^{\times}(G,H_\infty)
\]

When does the product in \eqref{EQ:PROD-DEF} converge? Is the value
$t^{\times}(G,H_\infty)$ as defined above also the limit of
$t^*(G,(H_\infty)_{\QQ_n})$ for every exhausting partition sequence
$(\QQ_i)_{i=1}^\infty$? Is $t^{\times}(G,H_\infty)=t(G,H_\infty)$? For the
first question we give a reasonably general sufficient condition. The other two
remain open.

Let $(H_n)$ be a sequence of (very dense) simple graphs such that $p_n\ge n$.
Let $\overline{H}_n$ denote the complement of $H_n$, including all loops at the
nodes. Let $d_n$ denote the maximum degree of $\overline{H}_n$ and assume that
$d_n=O(1)$. \lljav{Let $r_n= p_n^2-2q_n$ be the number of oriented edges of
$\overline{H}$, then $r_n\le d_np_n=O(p_n)$.}

Let $G$ be a simple graph with $a$ nodes and $b$ edges. For $Y\subseteq E(G)$,
let $G_Y=(V(G),Y)$. Then by inclusion-exclusion,
\[
\hom(G,H_n) = \sum_{Y\subseteq E(G)} (-1)^{|Y|} \hom(G_Y,\overline{H}_n).
\]
Here $\hom(G_\emptyset,\overline{H}_n)=p_n^a$ and $\hom(G_Y,\overline{H}_n)=r_n
p_n^{a-2}$ if $|Y|=1$. If $|Y|\ge 2$, then selecting one node from each
connected component of $G_Y$, we get $a-c$ points, where $c\ge 2$. We can map
these points $p_n^{a-c}$ ways, but the remaining points in at most $d_n^c$
ways, so we get then
\[
\hom(G_Y,\overline{H}_n) \le d_n^c p_n^{a-c}=O(p_n^{a-2}).
\]
Hence
\[
\hom(G,H_n) = p_n^a-br_n p_n^{a-2} + O(p_n^{a-2}),
\]
and so
\[
t(G,H_n)= 1-\frac{br_n}{p_n^2} + O(p_n^{-2}).
\]
Clearly $t(K_2,H_n) = 1-r_n/p_n^2$, and so
\[
t(K_2,H_n)^b = 1-\frac{b r_n }{p_n^2} + O\Big(\frac{r_n^2}{p_n^4}\Big)
= 1-\frac{b r_n }{p_n^2} + O(p_n^{-2}).
\]
Thus
\[
t^*(G,H_n)=\frac{1-br_n/p_n^2+O(p_n^{-2})}{1-br_n/p_n^2+O(p_n^{-2})} = 1+O(p_n^{-2}).
\]
Using that $d_n=O(1)$ and $p_n\ge n$, it follows that the product in
\eqref{EQ:PROD-DEF} is convergent.

It is interesting to consider two special examples.

\begin{example}[Powers of a graph]\label{EXA:POWERS}
As remarked before, our methods above work for compact operators only. Here is
an example where extension of the results to operators that are ``almost''
compact would be very useful.

Let $H=(V,E)$ be a $d$-regular graph with $n$ nodes, and consider its direct powers
$H^{{\times} k}$, $k=1,2,\dots$. Let $\eta$ be the uniform distribution on the
edges of $H$, then the marginal of $\eta$ is the uniform distribution $\pi$ on
$V$, and the stationary distribution on $V(H^{{\times} k})$ is $\pi^k$.

Going to the limit $k\to\infty$, we get a limit object on $J=V^\Nbb$, with
sigma-algebra generated by sets $A_1\times A_2\times \cdots$ where all but a finite
number of factors are $V$, and stationary measure defined by
$\pi^\omega(A_1\times A_2\times\cdots)= \pi(A_1)\pi(A_2)\cdots$. The edge measure
$\eta^\omega$ is defined similarly. The edge measure is supported on the set
$E\times E\times\dots$, so it is quite singular with respect to
$\pi^\omega\times\pi^\omega$.

For a point $(v_1,v_2,\dots)\in V^\omega$ of the Markov space
$(V^\omega,\eta^\omega)$, a Markov step is generated by choosing a random
neighbor $u_i$ of $v_i$ independently for all $i$, and moving to
$(u_1,u_2,\dots)$.

The operator $\Ab$ associated with the Markov space $\eta^\omega$ is,
unfortunately, not compact. Let $\lambda_1=1,\lambda_2,\dots,\lambda_n$ be the
eigenvalues of $H$ (normalized by $d$), with corresponding eigenvectors
$w_1,\dots,w_n$. Then for every finite sequence of positive integers
$k_1<\dots<k_r$, and every choice of indices $2\le i_1,\dots,i_r\le n$, $\Ab$
has an eigenfunction
\begin{equation}\label{EQ:EIGENF-POW}
f_i(u_1,u_2,\dots) = \prod_{j=1}^r w_{i_j,u_{k_j}}
\end{equation}
with eigenvalue
\begin{equation}\label{EQ:EIGENV-POW}
\prod_{j=1}^r \lambda_{i_j}.
\end{equation}
The multiplicity of each of these eigenvalues is infinite, since there are a
countably infinite number of sequences $(k_i)$ with the same length. So $\Ab$
is not compact. On the other hand, the nonzero eigenvalues of $\Ab$ are
products of a finite number of normalized eigenvalues of $H$, so they form a
discrete set with only one accumulation point at $0$, so $\Ab$ does have some
resemblance of compact operators.

Can we define the density of a bipartite graph $G$ in $\eta^\omega$, and show
that this is nonzero? If, in addition, we can prove that $t^*(G,H^{{\times}
k})\to t(G,\eta^\omega)$, then Sidorenko's conjecture would follow.
\end{example}

\begin{example}[Products of complete graphs]\label{EXA:COMPLETE-PROD}
Let $H_n=K_n$ be the complete $n$-graph (without loops). For the product to be
nontrivial, we consider $K_2\times K_3\times\cdots$. For every graph $G$ with
$p$ nodes and $q$ edges,
\[
|\hom(G,K_n)|=\chi_G(n),
\]
where $\chi_G$ denotes the chromatic polynomial of $G$. It follows that for
$n\ge 2$,
\[
t(K_2,\wh{H}_n) = \prod_{j=2}^i\frac{j(j-1)}{j^2} = \frac1{i},
\]
showing that $(\wh{H}_1,\wh{H}_2,\dots)$ is a sparse graph sequence. It is well
known that $\chi_G$ is a polynomial of degree $p$, and it has the form
$x^p+a_1x^{p-1}+a_2x^{p-2}+\dots$ with $a_1=-q$. Hence
\[
t^*(G,H_n)= \frac{\chi_G(n) n^{q-p}}{(i-1)^q} =
\frac{i^q - q i^{q-1} + a_2 i^{q-2}+\cdots}{i^q-q i^{q-1} +\binom{q}{2}q^2 +\cdots}
=1+O\Big(\frac1{n^2}\Big),
\]
and so the product $\prod_{j=2}^\infty t^*(G,H_j)$ is convergent.

The Markov space $H_\infty$ is $(k,p)$-loose for every $k,p\ge 1$. Indeed, let
$x=(x_2,x_3,\dots)$ be a random point of $H_\infty$, and let
$y_1=(y_{12},y_{13},\dots)$, $\dots$, $y_k=(y_{k2},y_{k3},\dots)$ be $k$ random
steps from $x$. Then for $n>k$, the joint distribution of $y_{1n},\dots,y_{kn}$
is uniform over all $k$-tuples of points of $K_n$; for $n\le k$ it is not
uniform, but trivially it has a density function $F_{kn}$. Then
$s_k=F_{k2}F_{k3}\cdots F_{k,k}$ (not depending on the coordinates $n>k$) is the
density function of $(y_1,\dots,y_k)$. Trivially $s_k^p$ is a bounded function,
and so $s_k\in L^p$.

It follows that for every bipartite graph $G$ we have $\lim_{i\to\infty}
t^*(G,(H_\infty)_{\QQ_i})=t(G,H_\infty)$ for every exhausting partition
sequence $(\QQ_i)_{i=1}^\infty$ by Theorem \ref{THM:KP-BIP}.
\end{example}

\section{Cycle densities and the spectrum}

It is well known that the homomorphism number of the $k$-cycle $C_k$ in a graph
$G$ is the sum of the $k$-th powers of the eigenvalues of the adjacency matrix
of $G$. This can be generalized to graphons and even to bounded symmetric
measurable functions $W:\Omega^2\to\mathbb{R}$ where $(\Omega,\mu)$ is a
standard probability space. In this case $t(C_k,W)$ is equal to
$\sum_{i=1}^\infty \lambda_i^k$ where the numbers $\lambda_i$ are the
eigenvalues of $W$ as an integral kernel operator. In this section we push this
further to operators on $L^2$ spaces whose $k$-th Schatten norm is finite for
some $k$. In particular the main result of this section (see Theorem
\ref{cycdense2}) implies the following theorem.

\begin{theorem}\label{THM:CYCLE-APPROX}
Let $k$ be an integer and assume that the $k$-th Schatten norm of the adjacency
operator $\Ab$ of a Markov space $\Mb=(J,\mathcal{B},\eta)$ is finite. Then
$\eta^{C_k}$ is partition approximable, and $t(C_{k},\eta)$ is equal to the sum of
the $k$-th powers of the eigenvalues of $\Ab$.
\end{theorem}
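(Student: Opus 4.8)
\medskip
\noindent\textbf{Proof plan.}

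The plan is to reduce the statement to a trace formula for cycle densities in graphons, combined with a Schatten-norm continuity argument. Write $\Ab=\Ab_\eta$; by hypothesis $\Ab$ lies in the Schatten class $S_k$, so it is a compact self-adjoint contraction with eigenvalues $\lambda_1,\lambda_2,\dots$ satisfying $\sum_i|\lambda_i|^k<\infty$, and $\tr(\Ab^k)=\sum_i\lambda_i^k$ is an absolutely convergent series (and $k\ge 3$, since $C_1,C_2$ are not simple graphs). Fix an exhausting partition sequence $(\PP_i)_{i=1}^\infty$, which as noted in Section~\ref{SEC:PARTITIONS} we may take to be generating, and write $\Ebb_i=\Ebb_{\PP_i}$; on $L^2(\pi)$ this is the orthogonal projection onto $\PP_i$-measurable functions, so $\Ebb_i\to I$ strongly by the Martingale Convergence Theorem, and the graphon $\eta_{\PP_i}$ has adjacency operator $\Ab_i=\Ebb_i\Ab\Ebb_i$. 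The key elementary computation is this: for any $1$-regular graphon $W$ with adjacency operator $\Bb$, unfolding $\eta_W^{C_k}=W^{C_k}\cdot\pi^V$ (Theorem~\ref{THM:MU-ETA}) and integrating the cyclic product of kernels around $C_k$ gives
\[
\eta_W^{C_k}(B_0\times\dots\times B_{k-1})=\tr\bigl(M_{B_0}\Bb\,M_{B_1}\Bb\cdots M_{B_{k-1}}\Bb\bigr)
\]
for all Borel $B_0,\dots,B_{k-1}\subseteq J$, where $M_B$ is multiplication by $\one_B$ on $L^2(\pi)$; in particular $t(C_k,W)=\tr(\Bb^k)$. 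The right-hand side continues to make sense for any $\Bb\in S_k$, since by H\"older's inequality for Schatten norms $\|M_{B_0}\Bb\cdots M_{B_{k-1}}\Bb\|_{S_1}\le\prod_j\|M_{B_j}\Bb\|_{S_k}\le\|\Bb\|_{S_k}^{k}$.

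Next I would show $\Ab_i\to\Ab$ in $S_k$-norm: from $\Ab_i-\Ab=\Ebb_i\Ab(\Ebb_i-I)+(\Ebb_i-I)\Ab$ and a finite-rank approximation of $\Ab$ in $S_k$, the strong convergence $\Ebb_i\to I$ kills each finite-rank piece while the Schatten tail is uniformly small, so $\|(\Ebb_i-I)\Ab\|_{S_k}\to 0$, and the first summand is handled symmetrically using self-adjointness. Telescoping $\prod_j M_{B_j}\Ab_i-\prod_j M_{B_j}\Ab$ into a sum of $k$ terms, each carrying one factor $\Ab_i-\Ab$ and the remaining factors uniformly bounded in the appropriate Schatten norms, and applying Schatten--H\"older, yields $\tr\bigl(\prod_j M_{B_j}\Ab_i\bigr)\to\tr\bigl(\prod_j M_{B_j}\Ab\bigr)$ for all $B_0,\dots,B_{k-1}$; taking all $B_j=J$ gives $t(C_k,\eta_{\PP_i})=\tr(\Ab_i^{k})\to\tr(\Ab^{k})=\sum_i\lambda_i^{k}$.

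Now I would assemble $\eta^{C_k}$. Define the set function $\lambda(B_0\times\dots\times B_{k-1}):=\tr(M_{B_0}\Ab\cdots M_{B_{k-1}}\Ab)$. By the previous paragraph it is the pointwise limit of the nonnegative measures $\eta_{\PP_i}^{C_k}$ on every box with sides in some $\wh\PP_j$, hence nonnegative and finitely additive on the algebra these boxes generate, with total mass $\tr(\Ab^k)<\infty$; the one-coordinate marginals of $\eta_{\PP_i}^{C_k}$ equal $g_i\cdot\pi$ with $g_i=\diag(\Ab_i^{k})$, and $g_i\to\diag(\Ab^{k})$ in $L^1(\pi)$ because $\Ab_i^{k}\to\Ab^{k}$ in $S_1$, and this uniform absolute continuity of the marginals lets one extend $\lambda$ to a finite measure $\eta^{C_k}$ on $\BB^V$ and gives $\eta_{\PP_i}^{C_k}\to\eta^{C_k}$ on all boxes --- that is, $\eta^{C_k}$ is partition approximable, the limit being sequence-independent as it equals $\tr(M_{B_0}\Ab\cdots M_{B_{k-1}}\Ab)$. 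Finally, to see $C_k$ is well-measured, set $\eta^{C_k[S]}$ for $S\subsetneq V$ to be the forest measure of Section~\ref{SEC:TREES} (product of path measures over components) and $\eta^{C_k[V]}=\eta^{C_k}$; normalization is immediate, the Markov property holds because a separating set of the cycle makes the trace formula split into a product over the two arcs, and the decreasing property $(\eta^{C_k})^S\ll\eta^{C_k[S]}$ is obtained by passing the graphon-level relations $(\eta_{\PP_i}^{C_k})^S\ll\eta_{\PP_i}^{C_k[S]}$ to the limit, using that $\Ab^m$ is Hilbert--Schmidt for every $m\ge\lceil k/2\rceil$ (so the relevant powers $\eta^m$ are genuine graphons and the corresponding paths are loose in $\eta$) together with the uniform absolute continuity above; the triangle $C_3$ is treated by hand (no proper cutset, so the Markov condition is vacuous, and ``decreasing'' reduces to $(\eta^{C_3})^{\{i,j\}}\ll\eta$, checked from the trace formula and monotonicity of $\Ab$).

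The first two paragraphs are routine functional analysis; the main obstacle is the decreasing property in the last step, because a limit of pairs with $\mu_i\ll\nu_i$ need not satisfy $\mu\ll\nu$. Overcoming it requires exploiting the explicit form of the marginals of $\eta^{C_k}$ --- pointwise products of kernels of powers of $\Ab$ --- and the fact that deleting a sufficiently long arc from $C_k$ leaves a path that is loose in $\eta$; the parity of $k$ governs which arc one may delete and is the source of the extra care needed.
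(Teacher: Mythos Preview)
Your approach is correct and genuinely different from the paper's. The paper argues entirely through eigenvalues: it uses the Courant--Fischer--Weyl min-max principle to show that each ordered eigenvalue $\lambda_m^{\pm}(\Ebb_{\PP_j}\Ab\Ebb_{\PP_j})$ is sandwiched between $0$ and $\lambda_m^{\pm}(\Ab)$, proves pointwise convergence $\lambda_m^{\pm}(\Ab_j)\to\lambda_m^{\pm}(\Ab)$ from strong convergence of the projections, and then invokes dominated convergence on the eigenvalue sequence to conclude
\[
t(C_k,\eta_{\PP_j})=\sum_m\lambda_m^{+}(\Ab_j)^{k}+\sum_m\lambda_m^{-}(\Ab_j)^{k}\longrightarrow\sum_m\lambda_m^{+}(\Ab)^{k}+\sum_m\lambda_m^{-}(\Ab)^{k}.
\]
This is elementary (no Schatten--H\"older, no finite-rank approximation), but it only yields convergence of the total mass $t(C_k,\eta_{\PP_j})$; the paper does not actually establish convergence on general boxes and thus leaves the ``partition approximable'' claim for the full measure $\eta^{C_k}$ somewhat implicit.

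Your route---the trace identity $\eta_W^{C_k}(B_0\times\cdots\times B_{k-1})=\tr(M_{B_0}\Bb\cdots M_{B_{k-1}}\Bb)$, together with $\|\Ab_i-\Ab\|_{S_k}\to 0$ via finite-rank approximation and a telescoping Schatten--H\"older bound---gives convergence on \emph{every} box and an explicit formula for the limit measure, so it actually proves the stronger statement the theorem asserts. The cost is slightly heavier operator-theoretic machinery. Your construction of $\eta^{C_k}$ from the box values is sound; countable additivity on the semiring of boxes follows because $B_j^{(n)}\downarrow\emptyset$ forces $M_{B_j^{(n)}}\Ab\to 0$ in $S_k$. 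The final paragraph on well-measuredness goes beyond what this particular statement claims (it only asserts partition approximability and the trace formula, not that $C_k$ is well-measured); you are right that the decreasing property would be the delicate point there, and the paper does not supply that argument either.
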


Let $\|.\|^\bullet_p$ denote $p$-th Schatten norm. Also, given a compact
self-adjoint operator $A$ on an infinite dimensional Hilbert space $H$ and an
integer $k\geq 1$, let $\lambda_k^+(A)$ be the $k$-th largest (counting
multiplicities) positive eigenvalue of $A$, with the convention
$\lambda_k^+(A)=0$ if there are less than $k$ such eigenvalues. Similarly let
$\lambda_k^-(A)$ be the $k$-th smallest (counting multiplicities) negative
eigenvalue of $A$, with the convention $\lambda_k^+(A)=0$ if there are less
than $k$ such eigenvalues. Note that we then have
\[
\left(\|A\|^\bullet_p\right)^p=\sum_{k=1}^\infty |\lambda_k^+(A)|^p+\sum_{k=1}^\infty |\lambda_k^-(A)|^p.
\]

\begin{lemma}\label{le:CFW}
Let $U$ be a $d$ dimensional subspace in an infinite dimensional Hilbert space
$\mathcal{H}$ and let $A$ be a compact self-adjoint operator such that
$\|A\|_p^\bullet<\infty$ for some $p\geq 1$. Let
$\lambda'_1\geq\lambda'_2\geq\ldots\geq\lambda'_d$ be the eigenvalues of
$P_UAP_U|_U$. Then $\lambda_k^+(P_UAP_U)=\max\{\lambda_k',0\}$,
$\lambda_k^-(P_UAP_U)=\min\{\lambda_{d+1-k}',0\}$ and
$\lambda_k^+(A)\geq\lambda_k'\geq \lambda_{d+1-k}^-(A)$ for all $1\leq k\leq
d$.
\end{lemma}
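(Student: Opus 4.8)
The plan is to deduce the statement from the spectral theorem for compact self-adjoint operators together with the Courant--Fischer min--max principle; no property of $A$ beyond compactness and self-adjointness is needed (the finiteness of $\|A\|_p^\bullet$ is used only to guarantee compactness). First I would dispose of the two exact identities. Regarded as an operator on $\mathcal{H}$, $B:=P_UAP_U$ is self-adjoint, has range contained in the $d$-dimensional subspace $U$, and vanishes on $U^\perp$; hence it is of finite rank, its restriction $B|_U$ has eigenvalues $\lambda'_1\ge\cdots\ge\lambda'_d$, and the spectrum of $B$ on all of $\mathcal{H}$ consists of these numbers together with $0$. Listing the positive ones in decreasing order gives $\lambda'_1,\dots,\lambda'_m$ where $m=\#\{i:\lambda'_i>0\}$, so under the stated convention $\lambda_k^+(B)=\lambda'_k$ when $\lambda'_k>0$ and $\lambda_k^+(B)=0$ otherwise, i.e.\ $\lambda_k^+(B)=\max\{\lambda'_k,0\}$; listing the negative eigenvalues in increasing order gives $\lambda'_d\le\lambda'_{d-1}\le\cdots$, whence $\lambda_k^-(B)=\min\{\lambda'_{d+1-k},0\}$.

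For the interlacing inequalities the key point is that $\langle Bx,x\rangle=\langle AP_Ux,P_Ux\rangle=\langle Ax,x\rangle$ for all $x\in U$, so the finite-dimensional Courant--Fischer theorem applied to $B|_U$ gives
\[
\lambda'_k=\max_{\substack{S\subseteq U\\ \dim S=k}}\ \min_{\substack{x\in S\\ \|x\|=1}}\langle Ax,x\rangle\ \le\ \mu_k(A):=\sup_{\substack{S\subseteq\mathcal{H}\\ \dim S=k}}\ \inf_{\substack{x\in S\\ \|x\|=1}}\langle Ax,x\rangle ,
\]
the inequality holding because on the right the supremum runs over a larger collection of subspaces. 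It then suffices to prove $\mu_k(A)\le\lambda_k^+(A)$. Write $\mathcal{H}=E_+\oplus E_0\oplus E_-$ for the orthogonal decomposition into the closed spans of the positive eigenvectors, the kernel, and the negative eigenvectors, and let $e_1,e_2,\dots$ be an orthonormal basis of $E_+$ ordered by non-increasing eigenvalue. If $\dim E_+\ge k$, then any $k$-dimensional $S$ meets $\{e_1,\dots,e_{k-1}\}^\perp$ nontrivially, and every unit vector $x$ in this intersection is orthogonal to all eigenvectors with eigenvalue exceeding $\lambda_k^+(A)$, so $\langle Ax,x\rangle\le\lambda_k^+(A)$ by the spectral expansion; hence $\mu_k(A)\le\lambda_k^+(A)$. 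If instead $\dim E_+=j<k$, suppose some $k$-dimensional $S$ had $\inf_{x\in S,\,\|x\|=1}\langle Ax,x\rangle>0$; then $S\cap(E_0\oplus E_-)=\{0\}$ (a nonzero common vector would make $\langle Ax,x\rangle$ both positive and $\le 0$), so the orthogonal projection onto $E_+$ is injective on $S$, forcing $k\le j$, a contradiction; thus $\mu_k(A)\le 0=\lambda_k^+(A)$. Either way $\lambda'_k\le\lambda_k^+(A)$.

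Finally, the lower bound follows by applying what has just been shown to $-A$: its compression $P_U(-A)P_U|_U$ has eigenvalues $-\lambda'_d\ge\cdots\ge-\lambda'_1$, so the role of ``$\lambda'_k$'' is played by $-\lambda'_{d+1-k}$, and since $\lambda_k^+(-A)=-\lambda_k^-(A)$ we obtain $-\lambda'_{d+1-k}\le-\lambda_k^-(A)$, i.e.\ $\lambda'_{d+1-k}\ge\lambda_k^-(A)$; renaming $k\mapsto d+1-k$ gives $\lambda'_k\ge\lambda_{d+1-k}^-(A)$. I expect the only genuinely delicate step to be the case $\dim E_+<k$ above: because $\mathcal{H}$ is infinite-dimensional and $0$ lies in the essential spectrum of $A$, one cannot run a naive dimension count and must instead argue through the trivial intersection $S\cap(E_0\oplus E_-)=\{0\}$.
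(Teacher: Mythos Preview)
Your proof is correct and follows essentially the same route as the paper: both arguments reduce the identities to the observation that $P_UAP_U$ is zero on $U^\perp$, and both obtain the interlacing inequalities from the Courant--Fischer min--max principle via the key identity $\langle P_UAP_Uv,v\rangle=\langle Av,v\rangle$ for $v\in U$. The only difference is cosmetic: the paper simply cites the variational characterization $\lambda_k^+(A)=\sup_{\dim W=k}\min_{v\in W,\,\|v\|=1}\langle Av,v\rangle$ (and its inf--max dual) as a known result, whereas you re-derive this inequality from the spectral decomposition, handling the case $\dim E_+<k$ by the injectivity argument; and the paper uses the inf--max form to get the lower bound directly, while you obtain it by passing to $-A$.
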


\begin{proof}
The identities follow from the fact that the operator $P_UAP_U$ is reduced by
the subspace $U$, and is the zero operator on $U^\perp$. Concerning the
inequalities, by the Courant--Fischer--Weyl theorem (or minmax principle, see
\cite[Excercise 6.34]{EW}), we have the following:
\begin{align*}
\lambda_k^+(A)&=\sup_{{\rm dim}(W)=k}\min_{\substack{v\in W,\\ \|v\|=1}} \langle Av,v\rangle,\\
\lambda_{d+1-k}^-(A)&=\inf_{{\rm dim}(W)=d+1-k}\max_{\substack{v\in W,\\ \|v\|=1}} \langle Av,v\rangle,\\
\lambda_k'&=\sup_{\substack{{\rm dim}(W)=k,\\W\subset U}}\min_{\substack{v\in W,\\ \|v\|=1}} \langle (P_UAP_U|_U)v,v\rangle,\\
\lambda_k'&=\inf_{\substack{{\rm dim}(W)=d+1-k,\\W\subset U}}\max_{\substack{v\in W,\\ \|v\|=1}} \langle (P_UAP_U|_U)v,v\rangle.
\end{align*}
Now note that then
\begin{align*}
\lambda_k'=&\sup_{\substack{{\rm dim}(W)=k,\\W\subset U}}\min_{\substack{v\in W,\\
\|v\|=1}} \langle (P_UAP_U|_U)v,v\rangle= \sup_{\substack{{\rm dim}(W)=k,\\
W\subset U}}\min_{\substack{v\in W,\\ \|v\|=1}} \langle AP_Uv,P_Uv\rangle\\
=& \sup_{\substack{{\rm dim}(W)=k,\\W\subset U}}\min_{\substack{v\in W,\\ \|v\|=1}} \langle Av,v\rangle
\leq \sup_{\substack{{\rm dim}(W)=k}}\min_{\substack{v\in W,\\ \|v\|=1}} \langle Av,v\rangle=\lambda_k^+(A),
\end{align*}
with the other inequality following by symmetry.
\end{proof}

This immediately leads to the following result.

\begin{corollary}\label{schproj}
Let $U$ be a $d$ dimensional subspace in a Hilbert space and let $A$ be a
self-adjoint operator such that $\|A\|_p^\bullet<\infty$ for some $p\geq 1$.
Then $\|P_UAP_U\|_p^\bullet\leq\|A\|_p^\bullet$.
\end{corollary}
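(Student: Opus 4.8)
The plan is to pass from the operators to their eigenvalue sequences and invoke the inequalities of Lemma~\ref{le:CFW}. First note that $P_UAP_U$ is a self-adjoint operator of rank at most $d$, hence compact, and it is reduced by $U$, acting as the zero operator on $U^\perp$; thus its nonzero eigenvalues are precisely the nonzero numbers among $\lambda'_1\ge\cdots\ge\lambda'_d$, the eigenvalues of $P_UAP_U|_U$. In particular $\|P_UAP_U\|_p^\bullet$ is automatically finite. Using the identities $\lambda_k^+(P_UAP_U)=\max\{\lambda'_k,0\}$ and $\lambda_k^-(P_UAP_U)=\min\{\lambda'_{d+1-k},0\}$ from Lemma~\ref{le:CFW}, the positive (resp.\ negative) eigenvalues of $P_UAP_U$ are exactly the positive (resp.\ negative) $\lambda'_k$, so
\[
\bigl(\|P_UAP_U\|_p^\bullet\bigr)^p
= \sum_{k\ge1} |\lambda_k^+(P_UAP_U)|^p + \sum_{k\ge1} |\lambda_k^-(P_UAP_U)|^p
= \sum_{k=1}^d |\lambda'_k|^p .
\]
It therefore suffices to bound $\sum_{k=1}^d |\lambda'_k|^p$ by $(\|A\|_p^\bullet)^p=\sum_{k\ge1}|\lambda_k^+(A)|^p+\sum_{k\ge1}|\lambda_k^-(A)|^p$.

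Next I would split the index set by the sign of $\lambda'_k$. Since $\lambda'_1\ge\cdots\ge\lambda'_d$, there is an $m$ with $\lambda'_1,\dots,\lambda'_m\ge0$ and $\lambda'_{m+1},\dots,\lambda'_d<0$. For $1\le k\le m$, the inequality $\lambda_k^+(A)\ge\lambda'_k\ge0$ of Lemma~\ref{le:CFW} gives $|\lambda'_k|^p\le|\lambda_k^+(A)|^p$, so
\[
\sum_{k=1}^m |\lambda'_k|^p \le \sum_{k=1}^m |\lambda_k^+(A)|^p \le \sum_{k\ge1} |\lambda_k^+(A)|^p .
\]
For $m+1\le k\le d$ we have $\lambda'_k<0$, and $\lambda'_k\ge\lambda_{d+1-k}^-(A)$ forces $\lambda_{d+1-k}^-(A)\le\lambda'_k<0$, so $|\lambda'_k|^p\le|\lambda_{d+1-k}^-(A)|^p$. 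As $k$ runs over $m+1,\dots,d$, the index $d+1-k$ runs over the distinct values $1,\dots,d-m$, whence
\[
\sum_{k=m+1}^d |\lambda'_k|^p \le \sum_{k=m+1}^d |\lambda_{d+1-k}^-(A)|^p = \sum_{j=1}^{d-m} |\lambda_j^-(A)|^p \le \sum_{j\ge1} |\lambda_j^-(A)|^p .
\]
Adding the two estimates yields $\sum_{k=1}^d|\lambda'_k|^p\le(\|A\|_p^\bullet)^p$, which gives the claim after taking $p$-th roots.

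The argument is essentially bookkeeping once Lemma~\ref{le:CFW} is available; there is no analytic obstacle, since finiteness of $\|A\|_p^\bullet$ makes all the series above convergent. The only step requiring a little care is the negative part: one must reindex via $k\mapsto d+1-k$ so that the bound is against distinct eigenvalues of $A$, and observe that a strictly negative $\lambda'_k$ is dominated by a genuine negative eigenvalue $\lambda_{d+1-k}^-(A)$ rather than by a conventional zero, so that the two halves of the estimate use disjoint portions of the positive and negative spectrum of $A$.
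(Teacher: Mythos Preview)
Your proof is correct and is precisely the argument the paper has in mind: the corollary is stated as an immediate consequence of Lemma~\ref{le:CFW}, and you have simply spelled out the bookkeeping (splitting by sign and reindexing the negative part) that makes ``immediate'' rigorous.
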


The above can be used to express the $\ell$-th Schatten norm of an operator as
the limit of that of its finite dimensional approximants.

\begin{prop}\label{cycdense}
Assume $A$ is a bounded, self-adjoint operator on a Hilbert-space $\mathcal{H}$
with $\|A\|_{\ell}^\bullet<\infty$. Assume that
$\{\mathcal{H}_j\}_{j=1}^\infty$ is a sequence of finite dimensional subspaces
of $\mathcal{H}$ such that $\mathcal{H}_j\subseteq\mathcal{H}_{j+1}$ holds for
every $j$ and $\,\bigcup_{j=1}^\infty\mathcal{H}_j$ is dense in $\mathcal{H}$. Then
\begin{align*}
&\sum_{k=1}^\infty \lambda_k^+(A)^\ell=\lim_{j\to\infty}\sum_{k=1}^\infty \lambda_k^+(P_{\mathcal{H}_j}AP_{\mathcal{H}_j})^\ell,\\
&\sum_{k=1}^\infty \lambda_k^-(A)^\ell=\lim_{j\to\infty}\sum_{k=1}^\infty \lambda_k^-(P_{\mathcal{H}_j}AP_{\mathcal{H}_j})^\ell,
\end{align*}
and also $\|A\|_{\ell}^\bullet=\lim_{j\to\infty}\|A_j\|_{\ell}^\bullet<\infty$.
\end{prop}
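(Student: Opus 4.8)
The plan is to reduce everything to a single fact: the finite-rank operators $A_j:=P_{\mathcal H_j}AP_{\mathcal H_j}$ converge to $A$ in operator norm. Finiteness of $\|A\|_\ell^\bullet$ places $A$ in the Schatten class $S_\ell$, so in particular $A$ is compact. Writing $Q_j:=P_{\mathcal H_j}$, the hypotheses that the $\mathcal H_j$ are nested and $\bigcup_j\mathcal H_j$ is dense give $Q_j\to I$ in the strong operator topology. Compactness of $A$ upgrades this to $\|(I-Q_j)A\|\to 0$, since $A$ maps the unit ball into a precompact set on which $I-Q_j$ tends to $0$ uniformly; and $\|A(I-Q_j)\|=\|(I-Q_j)A^*\|=\|(I-Q_j)A\|$ because $A$ is self-adjoint. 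Splitting $A-A_j=(I-Q_j)A+Q_jA(I-Q_j)$ and using $\|Q_j\|\le 1$ then gives $\|A-A_j\|\le\|(I-Q_j)A\|+\|A(I-Q_j)\|\to 0$ as $j\to\infty$.

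Next I would pass from norm convergence of the operators to convergence of the individual eigenvalues. For self-adjoint $B,C$ and a unit vector $v$ one has $\langle Bv,v\rangle\le\langle Cv,v\rangle+\|B-C\|$, and feeding this into the min--max characterisation already used in Lemma~\ref{le:CFW} yields $|\lambda_k^+(B)-\lambda_k^+(C)|\le\|B-C\|$ for every $k$. Applied to $B=A_j$ and $C=A$ this gives $\lambda_k^+(A_j)\to\lambda_k^+(A)$ for each fixed $k$; replacing $A$ by $-A$, and using $\lambda_k^-(A)=-\lambda_k^+(-A)$ together with $-A_j=Q_j(-A)Q_j$, gives $\lambda_k^-(A_j)\to\lambda_k^-(A)$ as well. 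Moreover Lemma~\ref{le:CFW}, applied with the $\dim(\mathcal H_j)$-dimensional subspace $\mathcal H_j$ and the operator $A$, provides the uniform bounds $0\le\lambda_k^+(A_j)\le\lambda_k^+(A)$ and $\lambda_k^-(A)\le\lambda_k^-(A_j)\le 0$ for all $j$ and $k$.

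Now the three displayed identities follow by termwise passage to the limit in a series with a fixed summable majorant. For the first, $\lambda_k^+(A_j)^\ell\le\lambda_k^+(A)^\ell$ for all $j,k$ and $\sum_k\lambda_k^+(A)^\ell\le(\|A\|_\ell^\bullet)^\ell<\infty$ by the eigenvalue expansion of the Schatten norm recalled before Lemma~\ref{le:CFW}, so dominated convergence for series gives $\sum_k\lambda_k^+(A_j)^\ell\to\sum_k\lambda_k^+(A)^\ell$. The second identity is identical after replacing $\lambda_k^+$ by $\lambda_k^-$ (now dominated by $|\lambda_k^-(A)|^\ell$, so the argument is insensitive to the parity of $\ell$). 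Adding the two with absolute values yields $(\|A_j\|_\ell^\bullet)^\ell\to(\|A\|_\ell^\bullet)^\ell$, and extracting $\ell$-th roots gives $\|A_j\|_\ell^\bullet\to\|A\|_\ell^\bullet<\infty$; finiteness of each $\|A_j\|_\ell^\bullet$ is in any case immediate from Corollary~\ref{schproj}.

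The only genuinely delicate point is the claim in the first paragraph that $\|(I-Q_j)A\|\to 0$. This is exactly where compactness of $A$ is indispensable — the assertion already fails for $A=I$ — and it is the standard fact that a compact operator turns strong convergence into uniform convergence on the image of the unit ball; I would record the short proof via a finite $\eps$-net of the precompact set $A(B_{\mathcal H})$. An alternative that avoids norm convergence is to prove $\liminf_j\lambda_k^+(A_j)\ge\lambda_k^+(A)$ directly, by approximating the top-$k$ eigenvectors of $A$ by a near-orthonormal $k$-tuple inside some $\mathcal H_j$ and estimating $\min_{v\in W_j,\ \|v\|=1}\langle Av,v\rangle$ from below; but this still invokes the spectral theorem for $A$, so compactness enters either way, and I find the norm-convergence route cleaner. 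Everything downstream of that one estimate is soft, resting only on the min--max principle of Lemma~\ref{le:CFW} and on monotone/dominated convergence for series.
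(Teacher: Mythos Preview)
Your argument is correct. The route differs from the paper's in one respect: you first establish the stronger fact $\|A-A_j\|\to 0$ (via compactness of $A$ and the ``compact operators upgrade strong convergence to norm convergence'' principle), then read off $\lambda_k^\pm(A_j)\to\lambda_k^\pm(A)$ from the Weyl perturbation inequality, and finish by dominated convergence. The paper instead proves the eigenvalue convergence directly, exactly along the lines you sketch in your final paragraph: fix a $k$-dimensional subspace $W$ nearly realizing $\lambda_k^+(A)$ in the min--max formula, observe that $\langle A_j v,v\rangle\to\langle Av,v\rangle$ uniformly on the compact unit sphere of $W$ (which only uses strong convergence $P_{\mathcal H_j}\to I$, not compactness of $A$ beyond what is already needed for min--max), and conclude $\liminf_j\lambda_k^+(A_j)\ge\lambda_k^+(A)-\varepsilon$; then apply monotone convergence. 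Your approach buys a stronger intermediate statement (operator-norm convergence, uniform in $k$), at the cost of invoking the extra lemma about compact operators; the paper's approach is slightly more self-contained and exploits the monotonicity $\lambda_k^+(A_j)\le\lambda_k^+(A_{j+1})$ implicit in the nested subspaces.
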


\begin{proof}
For $j\in\mathbb{N}$, let $A_j:=P_{\mathcal{H}_j}AP_{\mathcal{H}_j}$. By Lemma
\ref{le:CFW}, we have $0\leq\lambda_k^+(A_j)\leq\lambda_k^+(A)$ and
$0\geq\lambda_k^-(A_j)\geq\lambda_k^-(A)$ for every $k\geq1$. If we can show
that for every $k\geq1$, $\lim_{j\to\infty}\lambda_k^+(A_j)=\lambda_k^+(A)$ and
$\lim_{j\to\infty}\lambda_k^-(A_j)=\lambda_k^-(A)$ hold, then we are done by
the monotone convergence theorem.

Fix $\varepsilon>0$ and $k\geq1$, and let $W\subset\mathcal{H}$ be a $k$
dimensional subspace such that
\[
\min_{\substack{v\in W,\\ \|v\|=1}} \langle Av,v\rangle\geq\lambda_k^+(A)-\varepsilon.
\]
Since $\bigcup_{j=1}^\infty\mathcal{H}_i$ is dense in $\mathcal{H}$, we have that
$\lim_{j\to\infty}P_{\mathcal{H}_j}=I$ strongly, and so
\[
\lim_{j\to\infty}\min_{\substack{v\in W,\\ \|v\|=1}} \langle A_jv,v\rangle=\min_{\substack{v\in W,\\
\|v\|=1}} \langle Av,v\rangle\geq\lambda_k^+(A)-\varepsilon,
\]
implying that $\liminf_j \lambda_k^+(A_j)\geq \lambda_k^+(A)-\varepsilon$. As
$\lambda_k^+(A_j)\leq\lambda_k^+(A)$ for all $j$, and $\varepsilon>0$ was
arbitrary, we obtain $\lim_{j\to\infty}\lambda_k^+(A_j)=\lambda_k^+(A)$ as
desired. By symmetry the same holds for the negative eigenvalues, and we are
done.
\end{proof}

For the next theorem we need some preparation. Let $\Ab$ be a bounded, self
adjoint operator on $L^2(\Omega,\nu)$, where $(\Omega,\nu)$ is a standard
probability space. Assume that $\PP$ is a finite, measurable, non-degenerate
partition of $\Omega$. Then we have that $\mathbb{E}_\PP \Ab\mathbb{E}_\PP$ is
an integral kernel operator representable by a bounded measurable step-function
of the form $W:~\Omega^2\to\mathbb{R}$. In this context it makes sense to talk
about subgraph densities of the form
$t(H,\mathbb{E}_\PP\Ab\mathbb{E}_\PP):=t(H,W)$.

\begin{theorem}\label{cycdense2}
Assume that $\Ab$ is a bounded, self-adjoint operator on $L^2(\Omega,\nu)$
where $(\Omega,\nu)$ is a standard probability space. Assume that
$\|\Ab\|_{\ell}^\bullet<\infty$ for some $\ell\in\mathbb{N}$. Let
$(\PP_i)_{i=1}^\infty$ be an exhausting partition sequence of $\Omega$. Then
\[
\lim_{j\to\infty} t(C_{\ell},\mathbb{E}_{\PP_j}\Ab\mathbb{E}_{\PP_j})
=\sum_{k=1}^\infty \lambda_k^+(A)^\ell+\sum_{k=1}^\infty \lambda_k^-(A)^\ell
\]
\end{theorem}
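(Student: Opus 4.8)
The plan is to realize $\Ebb_{\PP_j}\Ab\Ebb_{\PP_j}$ as a finite-rank compression $P_{U_j}\Ab P_{U_j}$ of $\Ab$ onto an increasing sequence of subspaces with dense union, so that the statement reduces to Proposition \ref{cycdense}. Write $U_j\subseteq L^2(\Omega,\nu)$ for the finite-dimensional subspace of $\PP_j$-measurable functions; viewed as an operator on $L^2$, $\Ebb_{\PP_j}$ is self-adjoint and idempotent, hence it is the orthogonal projection $P_{U_j}$ onto $U_j$. Since $\PP_{j+1}$ refines $\PP_j$ we have $U_j\subseteq U_{j+1}$, and since $(\PP_i)_{i=1}^\infty$ is exhausting, $\bigcup_j U_j$ is dense in $L^2(\Omega,\nu)$: every indicator $\one_A$ $(A\in\BB)$ agrees $\nu$-almost everywhere with an $\overline{\RR}$-measurable function, the $\overline{\RR}$-measurable $L^2$ functions are approximated in $L^2$ by $\wh\RR$-simple functions, and each such simple function lies in some $U_j$ because the $\PP_i$ are nested.

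Next I would identify the cycle density with a trace. The operator $A_j:=\Ebb_{\PP_j}\Ab\Ebb_{\PP_j}=P_{U_j}\Ab P_{U_j}$ is self-adjoint and has rank at most $|\PP_j|$; it is the integral operator with bounded symmetric step kernel
\[
W_j(x,y)=\sum_{P,Q\in\PP_j}\frac{\langle\one_P,\Ab\one_Q\rangle}{\nu(P)\nu(Q)}\,\one_P(x)\one_Q(y),
\]
so that, by the very definition of density of a graph in $\Ebb_\PP\Ab\Ebb_\PP$, we have $t(C_\ell,\Ebb_{\PP_j}\Ab\Ebb_{\PP_j})=t(C_\ell,W_j)$. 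By Fubini,
\[
t(C_\ell,W_j)=\intl_{\Omega^\ell} W_j(x_1,x_2)W_j(x_2,x_3)\cdots W_j(x_\ell,x_1)\,d\nu^\ell=\tr(A_j^\ell),
\]
and since $A_j$ is finite-rank and self-adjoint (it vanishes on $U_j^\perp$ and is reduced by $U_j$, cf.\ Lemma \ref{le:CFW}), its nonzero eigenvalues are precisely those of $P_{U_j}\Ab P_{U_j}|_{U_j}$, whence
\[
t(C_\ell,\Ebb_{\PP_j}\Ab\Ebb_{\PP_j})=\tr(A_j^\ell)=\sum_{k=1}^\infty\lambda_k^+(A_j)^\ell+\sum_{k=1}^\infty\lambda_k^-(A_j)^\ell .
\]

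Finally I would apply Proposition \ref{cycdense} to $\Ab$ with $\mathcal H_j:=U_j$; this is legitimate since $\|\Ab\|_{\ell}^\bullet<\infty$, the $U_j$ are finite-dimensional and increasing, and $\bigcup_j U_j$ is dense in $L^2(\Omega,\nu)$. It gives
\[
\lim_{j\to\infty}\sum_{k=1}^\infty\lambda_k^+(A_j)^\ell=\sum_{k=1}^\infty\lambda_k^+(\Ab)^\ell,\qquad
\lim_{j\to\infty}\sum_{k=1}^\infty\lambda_k^-(A_j)^\ell=\sum_{k=1}^\infty\lambda_k^-(\Ab)^\ell,
\]
and adding these two limits, together with the trace identity above, yields exactly the asserted formula.

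The analytic heart of the argument — the convergence of the truncated Schatten-type sums of the compressions $A_j$ to those of $\Ab$ — is already packaged in Proposition \ref{cycdense} (via the minmax characterisation in Lemma \ref{le:CFW} and monotone convergence), so the only point that requires a little care here is the bookkeeping translating the combinatorial quantity $t(C_\ell,W_j)$ into $\tr(A_j^\ell)$ and then into the eigenvalue sum, keeping the positive and negative eigenvalues separate so that the identity stays correct when $\ell$ is odd. I do not expect any genuine obstacle beyond this.
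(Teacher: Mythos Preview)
Your proposal is correct and follows essentially the same route as the paper: identify $\Ebb_{\PP_j}$ with the orthogonal projection onto the finite-dimensional space of $\PP_j$-step functions, use the step-kernel representation to rewrite $t(C_\ell,\Ebb_{\PP_j}\Ab\Ebb_{\PP_j})$ as the eigenvalue sum for $A_j$, and then invoke Proposition~\ref{cycdense}. You spell out the density of $\bigcup_j U_j$ and the trace identity more explicitly than the paper does, but the argument is the same.
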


\begin{proof}
For $j\in\mathbb{N}$, let $\mathcal{H}_j$ denote the finite dimensional space of
$\PP_j$-measurable functions. It is clear that the sequence
$\{\mathcal{H}_j\}_{j=1}^\infty$ satisfies the conditions of Proposition
\ref{cycdense}. Note that the operator $\mathbb{E}_{\PP_j}$ is equal to
$P_{\mathcal{H}_j}$. Since $A_j=\mathbb{E}_{\PP_j}A\mathbb{E}_{\PP_j}$ is
representable by a step function we have that
\[
t(C_{\ell},A_j)
=\sum_{k=1}^\infty \lambda_k^+(A_j)^\ell+\sum_{k=1}^\infty \lambda_k^-(A_j)^\ell.
\]
Then Proposition \ref{cycdense} completes the proof.
\end{proof}

Theorem \ref{THM:CYCLE-APPROX} follows from these results: Theorem
\ref{cycdense2} implies that for every exhausting partition sequence $(\PP_i)$
we have $t(C_k, \eta_{\PP_i})\to t(C_k,\eta)<\infty$. This in particular
implies $(2, 2)$-looseness, so Theorem \ref{THM:PARMEX} implies that
$\eta^{C_k}$ is partition approximable.

\section{Open problems}

\begin{prob}\label{PROB:TREE-INDEP}
Find general conditions under which the measure $\eta_\FF$ produced by a tree
decomposition $\FF$ of a graph $G$ (not necessarily a star decomposition) is
independent of the decomposition.
\end{prob}

\begin{prob}\label{PROB:SOFIC}
Is every $k$-loose Markov space the $k$-limit of graphons?
\end{prob}

\begin{prob}[$(k,p)$-profile]\label{REM:LL-ETA}
Let $\LL(\eta)\subseteq\mathbb{N}^2$ denote the set of pairs $(k,p)$ for which
$\eta$ is $(k,p)$-loose. Theorem \ref{THM:KP-BIP} expresses subgraph densities
in $\eta$ under appropriate conditions on its ``$(k,p)$-profile'' $\LL(\eta)$.
Some properties of the set $\LL(\eta)$ have been established above: it is
symmetric in the two coordinates and it is monotone in the sense that if
$(k,p)\in\LL(\eta)$ and $k'\leq k,p'\leq p$, then $(k',p')\in\LL(\eta)$. It
would be interesting to establish further properties. For example, for the
$d$-dimensional orthogonality Markov space $\eta_d$, we have
$(k,p)\in\LL(\eta_d)$ if and only if $k+p\le d$ (see Lemma 3 in \cite{KLSz2}).
How ``wild'' can the boundary of the set $\LL(\eta)$ be in general?
\end{prob}

\begin{prob}\label{PROB:LOOSEPP}
Is every \lljav{$1$-regular} $L^p$-graphon $(p+1,p)$-loose? Perhaps
$(k,p)$-loose for every $k$? \lljav{(This is false without the assumption that
the graphon is $1$-regular, as shown by a construction similar to Example
\ref{EXA:PP}. We are grateful to the anonymous referee for this remark.)}
\end{prob}

\begin{prob}\label{PROb:SUP}
Does $t(G,\eta)=t_{\rm part}(G,\eta)$ hold for every bipartite graph $G$ and
every Markov space $\eta$? Could this be true at least for all graphons?
\end{prob}

\begin{prob}[Measure family and partition approximation]\label{PROB:STEPPING}
Let $(J,\BB,\eta)$ be a Markov space, let $G$ be a graph, and let
$(\PP_i)_{i=1}^\infty$ be an exhausting partition sequence. Assume that there
is a normalized Markovian measure family on the induced subgraphs of $G$. Does
this imply that $\eta_{\PP_n}^G\to\eta^G$ on boxes? This is true if $G=K_2$,
but even this very special case is not absolutely trivial.
\end{prob}

\begin{prob}\label{PROB:STEPPING3}
Theorems \ref{THM:MAIN-KAB} and \ref{THM:PARMEX} suggest that a theorem along
the following lines should hold: Let $(J,\BB,\eta)$ be a $k$-loose Markov
space, let $G$ be a graph with girth at least $5$ and degrees at most $k$, and
let $(\PP_i)_{i=1}^\infty$ be an exhausting partition sequence. Then
$\eta_{\PP_n}^G\to\eta^G$ on boxes.
\end{prob}

\begin{prob}\label{PROB:EQUIVALENCE}
Are the definitions of subgraph densities based on approximations and based on
various sequential tree decompositions equivalent, under reasonably general
conditions?
\end{prob}

\begin{prob}[Measures for graphings]\label{PROB:SUBGRAPH-MEASURE}
For a graphing $\Hb$ and a connected graph $G$, a measure on homomorphisms
$G\to\Hb$ can be defined in a natural way: We label a node $u$ of $G$ to get a
rooted graph $G_u$. For each $x\in J$, let $\psi_{G,x}$ be the counting measure
on homomorphisms mapping $u$ onto $x$ (this is a finite set of bounded size for
a fixed $G$). Then $\Psi_G=(\psi_{G,x}:~x\in J)$ is a measurable family, and we
can define $\eta^G=\pi[\Psi_G]$. It can be shown (using the Mass Transport
Principle for graphings) that this measure is independent of the choice of the
root. Is there a common generalization with our results?
\end{prob}

\begin{prob}[Compactness, and cycles versus other graphs]\label{PR:1}
Assume that $t(C_{2k},\eta)=\infty$ for even cycles $C_{2k}$. Is
$t(G,\eta)=\infty$ for every connected bipartite graph $G$ that is not a tree?
If this implication is true, then in particular whenever $t(G,\eta)$ is finite
for at least one connected bipartite graph $G$ besides trees, the operator
$\Ab_\eta$ is of some Schatten-class, and hence compact. A weaker question is
therefore whether this compactness is a necessary condition in any well-defined
sense for the finiteness of at least one density.
\end{prob}

\begin{prob}[Regularity and variance]\label{PR:2}
In \cite{BCLSV2}, a weak regularity partition of a graphon was constructed as
a finite, measurable, non-degenerate partition $\PP$ into a given number of
classes for which $\|W_\PP\|_2^2$ is (nearly) maximized. Do partitions $\PP$
for which $\|\eta_\PP\|_2^2$ is maximized have special properties and uses?
\end{prob}

\begin{prob}[Regularity and spectral approximation]\label{PR:2x}
It seems that the regularity lemma can be defined inside certain sparsity
classes. Assume that we just consider measures such that $t(C_{2k},\eta)<c$ for
some fixed constant. Then there are at most $c/\eps^{2k}$ eigenvalues greater
than $\eps>0$. The corresponding spectral approximation of the operator
$\Ab_\eta$ (represented by some bounded measurable function) may serve as a
regularization of $\eta$.
\end{prob}

\begin{prob}[Quotient topology vs $t$]\label{PR:3}
In \cite{KLSz1} we introduced a distance of s-graphons using quotients. How
does it relate to subgraph densities? Is there some continuity in any
direction, generalizing the Counting Lemma and/or the Inverse Counting Lemma
for bounded graphons?
\end{prob}

\begin{prob}[Edge coloring model approach]\label{PR:4}
It was observed and used in dense graph limit theory that spectral sums can be
used to rewrite $t(G,W)$ as the value of a certain edge coloring model. As an
example, see the proof that forcible finite rank graphons are step functions in
\cite{LSz11}. Nothing prevents us from pushing this further to more general
compact operators $\Ab_\eta$.
\end{prob}

\begin{prob}[Limit object]\label{PR:5}
Assume that for a graph sequence $\{G_i\}_{i=1}^\infty$, the numerical sequence
$t^*(F,G_i)$ is convergent for every graph $F$ satisfying appropriate sparsity
constraints. Is there a limit object in the form of an s-graphon?
\end{prob}

\begin{prob}[Existence of limit]\label{PR:6}
Can it happen that $\lim_{i\to\infty}t(G,\eta_{\PP_i})$ is finite for certain
exhausting partition sequences and infinite for other ones? Could it oscillate
for a given partition sequence?
\end{prob}

\noindent{\bf Acknowledgement.} Our thanks are due to the anonymous referees of
the first version of this paper for their very thorough and thoughtful
comments, which has lead to the elimination of several errors, and to
substantial improvement in the presentation.

\section{Appendices}

\subsection{Absolute continuity and Radon-Nikodym derivatives}\label{APP:A}

We collect some measure theory facts that are probably known, but difficult to
quote.

\begin{lemma}\label{LEM:RAD-NIK}
Let $(J,\BB)$ be a standard Borel space, and $\mu,\nu$ two measures on $\BB$
such that $\nu\ll\mu$ and $\mu$ is sigma-finite. Then the Radon-Nikodym
derivative $d\nu/d\mu:~J\to[0,\infty]$ exists, and it is uniquely determined
$\mu$-almost everywhere.
\end{lemma}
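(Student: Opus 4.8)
The plan is to bootstrap from the classical Radon--Nikodym theorem by splitting $J$ into a part on which $\nu$ is $\sigma$-finite and a part on which $\nu$ is ``uniformly infinite'' relative to $\mu$.

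First I would reduce to the case that $\mu$ is a probability measure. Since $\mu$ is $\sigma$-finite, fix a partition $J=\bigsqcup_n J_n$ with $0<\mu(J_n)<\infty$ and put $h=\sum_n 2^{-n}\mu(J_n)^{-1}\one_{J_n}$, a strictly positive finite measurable function; then $\mu'=h\cdot\mu$ is a probability measure with the same null sets as $\mu$, so $\nu\ll\mu'$, and from $g=d\nu/d\mu'$ one recovers $d\nu/d\mu=gh$ after checking the identity $\int_B g\,d\mu'=\int_B gh\,d\mu$ for $[0,\infty]$-valued $g$ (routine, and consistent with the convention $\int\infty\,d\mu=0$ on $\mu$-null sets because $0<h<\infty$). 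So assume $\mu$ finite.

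Next, consider $\CC=\{B\in\BB:\nu|_B\text{ is }\sigma\text{-finite}\}$, which is closed under measurable subsets and countable unions. Let $s=\sup\{\mu(B):B\in\CC\}\le\mu(J)<\infty$, choose $B_n\in\CC$ with $\mu(B_n)\to s$, and set $A=\bigcup_n B_n\in\CC$, so that $\mu(A)=s$ and $\mu(B\setminus A)=0$ for every $B\in\CC$ (since $A\cup B\in\CC$). Apply the classical Radon--Nikodym theorem to $\nu|_A\ll\mu|_A$ (a $\sigma$-finite measure dominated by a finite one) to get measurable $f_0:A\to[0,\infty)$ with $\nu|_A=f_0\cdot\mu|_A$. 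The decisive observation about $A^c$ is that any $B\subseteq A^c$ with $\mu(B)>0$ must have $\nu(B)=\infty$: otherwise $\nu|_B$ is finite, hence $B\in\CC$, contradicting $\mu(B\setminus A)=0$. Thus, setting $f=f_0$ on $A$ and $f\equiv\infty$ on $A^c$, for each $B\in\BB$ the contribution $\int_{B\cap A}f\,d\mu=\nu(B\cap A)$ is clear, while $\int_{B\cap A^c}\infty\,d\mu$ equals $0=\nu(B\cap A^c)$ when $\mu(B\cap A^c)=0$ (by $\nu\ll\mu$ and the convention) and equals $\infty=\nu(B\cap A^c)$ otherwise; summing gives $\nu=f\cdot\mu$.

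For uniqueness, if $f\cdot\mu=g\cdot\mu=\nu$ I would show $f=g$ $\mu$-a.e.\ on each $J_n$. Fixing $n$ and $k\in\Nbb$, let $E_k=\{x\in J_n:g(x)<k,\ f(x)>g(x)\}$; then $\nu(E_k)=\int_{E_k}g\,d\mu\le k\mu(J_n)<\infty$, which forces $f<\infty$ $\mu$-a.e.\ on $E_k$, and then $\int_{E_k}(f-g)\,d\mu=\nu(E_k)-\nu(E_k)=0$ with $f-g>0$ on $E_k$ gives $\mu(E_k)=0$. Letting $k\to\infty$ yields $\mu(\{x\in J_n:g(x)<\infty,\ f(x)>g(x)\})=0$, and since nothing exceeds $+\infty$ this set is all of $\{f>g\}\cap J_n$; by symmetry $\mu(\{f<g\}\cap J_n)=0$ as well. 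I expect the only delicate point throughout to be the bookkeeping with the value $+\infty$ and the nonstandard convention $\int_B\infty\,d\mu=0$ for $\mu(B)=0$ --- in particular, making sure the classical theorem is invoked only on the $\sigma$-finite piece $A$ and that the glued function genuinely reproduces $\nu$ on all of $\BB$; the rest is a routine exhaustion.
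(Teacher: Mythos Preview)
Your argument is correct and follows essentially the same route as the paper: reduce to finite $\mu$, exhaust to find a maximal set $A$ on which $\nu$ is $\sigma$-finite, apply the classical theorem there, and set the derivative to $\infty$ on $A^c$. Your version is in fact more careful than the paper's---you spell out the verification that $f\cdot\mu=\nu$ on $A^c$ case by case and give an explicit uniqueness argument, whereas the paper dismisses uniqueness as ``standard''; the only cosmetic difference is that you pass to a probability measure via a density $h$ while the paper simply splits $J$ into finite-$\mu$ pieces.
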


\begin{proof}
To prove the existence, we can split $J$ into a countable number of Borel sets
with finite $\mu$-measure, and apply the lemma to each of these. In other
words, we may assume that $\mu(J)$ is finite.

We claim that there is a set $U\in \BB$ such that $\nu|_U$ sigma-finite and
$\nu(X)=\infty$ for every $X\subseteq J\setminus U$ with $\mu(X)>0$. Let
$c=\sup\{\mu(X):~X\in\BB, \nu|_X\ \text{sigma-finite}\}$. Let $Y_n\in\BB$ be
chosen so that $\nu|_{Y_n}$ is sigma-finite and $\mu(Y_n)>c-1/n$. Then
$U=\bigcup_n Y_n$ has the properties as desired. Clearly $\nu|_U$ is
sigma-finite, and $\mu(U)\ge c$. By the maximality of $c$, we have $\mu(U)=c$,
and every set $X\subseteq J\setminus U$ with $\nu(X)<\infty$ must have
$\mu(X)=0$.

The standard Radon-Nikodym theorem, applied to $\mu|_U$ and $\nu|_U$, gives
$f|_U$. Defining $f$ as constant $\infty$ on $X\setminus M$, we obtain a
measurable $f:~J\to[0,\infty]$ such that $\nu= f\cdot \mu$.

Uniqueness of $f$ follows by standard arguments.
\end{proof}

\begin{lemma}\label{LEM:0-EQUI}
Let $(I,\AA)$ and $(J,\BB)$ be Borel spaces. Let $\Phi=(\mu_x:~x\in I)$ be a
measurable family of measures on $(J,\BB)$ and $\alpha_1,\alpha_2\in\Mf(\AA)$.
If $\alpha_1\ll\alpha_2$ then $\alpha_1[\Phi]\ll\alpha_2[\Phi]$.
\end{lemma}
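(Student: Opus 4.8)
The plan is to reduce the absolute continuity claim to a pointwise statement about the $x$-sections of a null set, after which only the scalar relation $\alpha_1\ll\alpha_2$ is needed. Fix a set $N\in\AA\times\BB$ with $\alpha_2[\Phi](N)=0$; the goal is to show $\alpha_1[\Phi](N)=0$. For $x\in I$ write $N_x=\{y\in J:~(x,y)\in N\}$ for the section of $N$, and set $h_N(x)=\mu_x(N_x)$. Applying \eqref{EQ:DISINT-D} to the nonnegative bounded measurable function $f=\one_N$, with $\alpha=\alpha_2$, gives
\[
\alpha_2[\Phi](N)=\intl_{I\times J}\one_N\,d\alpha_2[\Phi]=\intl_I\Big(\intl_J\one_{N_x}(y)\,d\mu_x(y)\Big)\,d\alpha_2(x)=\intl_I h_N\,d\alpha_2 .
\]
In particular $h_N$ is a nonnegative $\AA$-measurable function, and the hypothesis $\alpha_2[\Phi](N)=0$ forces $h_N=0$ for $\alpha_2$-almost all $x$.

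Next I would transport this null statement from $\alpha_2$ to $\alpha_1$. Since $\alpha_1\ll\alpha_2$, the set $\{x\in I:~h_N(x)>0\}$ has $\alpha_2$-measure zero, hence also $\alpha_1$-measure zero, so $h_N=0$ $\alpha_1$-almost everywhere. Invoking \eqref{EQ:DISINT-D} once more, now with $\alpha=\alpha_1$, yields $\alpha_1[\Phi](N)=\int_I h_N\,d\alpha_1=0$, which is exactly what is required. As $N$ was an arbitrary $\alpha_2[\Phi]$-null set, this establishes $\alpha_1[\Phi]\ll\alpha_2[\Phi]$. (Alternatively one could set $g=d\alpha_1/d\alpha_2\in L^1(\alpha_2)$, verify on the $\pi$-system of boxes that $\alpha_1[\Phi]=\tilde g\cdot\alpha_2[\Phi]$ where $\tilde g(x,y):=g(x)$, and read off absolute continuity; the sectionwise argument above avoids the need to discuss $\sigma$-finiteness of $\alpha_i[\Phi]$ for the uniqueness-of-measures step, so I prefer it.)

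The only point that genuinely needs a word of care — and the nearest thing to an obstacle here — is the use of \eqref{EQ:DISINT-D} with an indicator of a general set $N\in\AA\times\BB$ rather than with a box function: one must know that $x\mapsto\mu_x(N_x)$ is $\AA$-measurable and that the iterated-integral identity persists for all such $N$, not merely for products $A\times B$. This is precisely the standard monotone-class extension already underlying the definition of $\alpha[\Phi]$: the collection of sets $N$ for which both the measurability of $x\mapsto\mu_x(N_x)$ and the identity $\alpha[\Phi](N)=\int_I\mu_x(N_x)\,d\alpha(x)$ hold contains the $\pi$-system of boxes and is closed under the monotone limits involved, and the $\sigma$-finiteness of each $\mu_x$ makes every integral above well defined in $[0,\infty]$; no finiteness of $\alpha_1[\Phi]$ or $\alpha_2[\Phi]$ is needed. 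Beyond this bookkeeping I expect no difficulty — the substance of the lemma is just that ``$\alpha[\Phi]$-nullity is detected sectionwise'', and then absolute continuity of the base measures finishes the argument.
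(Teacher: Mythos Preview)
Your proof is correct and follows essentially the same route as the paper: take an $\alpha_2[\Phi]$-null set, pass to its $x$-sections, observe that $x\mapsto\mu_x(N_x)$ vanishes $\alpha_2$-a.e., transfer this to $\alpha_1$ by absolute continuity, and integrate back. Your added remarks on the monotone-class extension justifying \eqref{EQ:DISINT-D} for general measurable sets are more careful than the paper, which simply uses this identity without comment.
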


\begin{proof}
Suppose that $\alpha_2[\Phi](R)=0$ for some $R\in\AA\times\BB$. Let
$R(x)=\{y\in J:~(x,y)\in R\}$. Then
\begin{align*}
\alpha_2[\Phi](R) = \intl_I  \mu_x(R(x))\,d\alpha_2(x) =0
\end{align*}
implies that $\alpha_2\{x\in I:~\mu_x(R(x))>0\}=0$. But then $\alpha_1\{x\in
I:~\mu_x(R(x))>0\}=0$, implying by the same computation that
$\alpha_1[\Phi](R)=0$.
\end{proof}

\begin{lemma}\label{LEM:MARGINALS}
Let $\sigma$ be a probability distribution on $\BB^V$. Suppose that $\sigma \ll
\prod_{v\in V}\sigma^{\{v\}}$. Then $\sigma\ll\sigma^S\times\sigma^{V\setminus
S}$ for every $S\subseteq V$.
\end{lemma}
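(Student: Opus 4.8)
The plan is to pass to Radon--Nikodym derivatives with respect to the product of the one-dimensional marginals of $\sigma$, and then to argue that the density of $\sigma^S\times\sigma^{V\setminus S}$ against that product measure is strictly positive wherever the density of $\sigma$ is.

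In detail, write $\mu_1=\prod_{v\in S}\sigma^{\{v\}}$ on $\BB^S$ and $\mu_2=\prod_{v\in V\setminus S}\sigma^{\{v\}}$ on $\BB^{V\setminus S}$, and set $\mu=\mu_1\times\mu_2=\prod_{v\in V}\sigma^{\{v\}}$, a probability measure on $\BB^V$. By hypothesis $\sigma\ll\mu$, so Lemma \ref{LEM:RAD-NIK} gives a density $f=d\sigma/d\mu$, which is finite $\mu$-almost everywhere with $\intl_{J^V}f\,d\mu=1$. By Tonelli's theorem the marginals of $\sigma$ satisfy $\sigma^S=f_1\cdot\mu_1$ and $\sigma^{V\setminus S}=f_2\cdot\mu_2$, where $f_1(x)=\intl_{J^{V\setminus S}}f(x,y)\,d\mu_2(y)$ and $f_2(y)=\intl_{J^S}f(x,y)\,d\mu_1(x)$; hence $\sigma^S\times\sigma^{V\setminus S}=g\cdot\mu$ with $g(x,y)=f_1(x)f_2(y)$.

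The main step is to show that $\mu\bigl(\{f>0\}\setminus\{g>0\}\bigr)=0$. If $f_1(x)=0$ then, since $f\ge 0$, we have $f(x,y)=0$ for $\mu_2$-almost all $y$, so by Fubini the set $\{f>0\}\cap\bigl(\{f_1=0\}\times J^{V\setminus S}\bigr)$ is $\mu$-null; symmetrically, $\{f>0\}\cap\bigl(J^S\times\{f_2=0\}\bigr)$ is $\mu$-null. Removing these two null sets from $\{f>0\}$ leaves a subset of $\{f_1>0\}\times\{f_2>0\}=\{g>0\}$, which proves the claim. Now take any $R\in\BB^V$ with $(\sigma^S\times\sigma^{V\setminus S})(R)=0$; then $g=0$ holds $\mu$-almost everywhere on $R$ (as $g\ge 0$), so $\mu(R\cap\{g>0\})=0$, hence $\mu(R\cap\{f>0\})=0$, and therefore $\sigma(R)=\intl_R f\,d\mu=0$. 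Thus $\sigma\ll\sigma^S\times\sigma^{V\setminus S}$, as required.

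I expect the only genuinely delicate point to be this support-containment step, and even there the only subtlety is the standard Fubini bookkeeping with the ($\mu$-a.e.\ finite) density $f$; the rest is routine. It is worth noting that the argument in fact proves the slightly more general statement that any probability measure on a product of two standard Borel spaces which is absolutely continuous with respect to \emph{some} product of sigma-finite measures is automatically absolutely continuous with respect to the product of its own marginals.
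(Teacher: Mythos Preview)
Your proof is correct and follows essentially the same approach as the paper: both take the density $f=d\sigma/d\xi$ with respect to $\xi=\prod_v\sigma^{\{v\}}$, compute the marginal densities $f^S,f^{V\setminus S}$ by integrating out, and then argue that $\{f>0\}$ is contained in $\{f^S f^{V\setminus S}>0\}$ up to a $\xi$-null set. The paper phrases the final step slightly differently (bounding $\sigma(X)$ directly by the integrals over $\{f^S=0\}\times J^{V\setminus S}$ and $J^S\times\{f^{V\setminus S}=0\}$), but the content is the same.
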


\begin{proof}
Let $\xi=\prod_{v\in V}\sigma^{\{v\}}$ and $f=d\sigma/d\xi$. For any
$S\subseteq V$, the function
\[
f^S(y)=\intl_{J^{V\setminus S}}f(y,z)\,d\xi^{V\setminus S}(z)\qquad (y\in J^S).
\]
satisfies $\sigma^S=f^S\cdot \xi^S$. Let $U=\{y:~f^S(y)=0\}$ and
$Z=\{z:~f^{V\setminus S}(z)=0\}$. Suppose that
$\left(\sigma^S\times\sigma^{V\setminus S}\right)(X)=0$. Then
\[
\left(\sigma^S\times\sigma^{V\setminus S}\right)(X) = \intl_X f^S(y) f^{V\setminus S}(z)\,d\xi(y,z)
\]
implies that $X\subseteq (U\times J^{V\setminus S})\cup(J^S\times Z)$
$\xi$-almost everywhere. Hence
\begin{align*}
\sigma(X) &\le \intl_{U\times J^{V\setminus S}} f(x)\,d\xi(x) + \intl_{J^S \times Z} f(x)\,d\xi(x)\\
&= \intl_{U} f^S(x)\,d\xi^S(x) + \intl_{Z} f^{V\setminus S}(x)\,d\xi^{V\setminus S}(x) =0.
\end{align*}
\end{proof}

\subsection{Markovian property and Markov random fields}\label{APP:B}

We show that Markovian measure families and Markov random fields on a graph
$G=(V,E)$ are related. This latter can be defined as a probability distribution
$\mu$ on $\BB^V$ such that the marginal family $(\mu^S:~S\subseteq V)$
satisfies the Markovian property for sets $U,W\subseteq V$ such that $U\cup
W=V$. More precisely, let $R_{S,T}=(\Rho_{S,T,z}:~z\in J^S)$ be a measurable
family of measures on $\BB^{T\setminus S}$ such that $\mu^S[R_{S,T}] =\mu^T$.
Then we require that whenever $U\cup W=V$, $S=U\cap W$, and there is no edge
between $U\setminus S$ and $W\setminus S$, then
\begin{equation}\label{EQ:RHO-INDEP}
\Rho_{S,V,x}=\Rho_{S,U,x}\times\Rho_{S,W,x}
\end{equation}
for $\mu^S$-almost all $x\in J^S$.

\begin{prop}\label{PROP:MARKOV-MARKOV}
If a family $\MM=(\mu_S:~S\subseteq V)$ of sigma-finite measures is Markovian
with respect to a graph $G$, and $\mu=\mu_V$ is a probability distribution,
then $\mu$ is a Markov random field on $G$.
\end{prop}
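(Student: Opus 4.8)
The plan is to compare the two disintegration systems that the statement implicitly juxtaposes. The Markov random field condition for $\mu=\mu_V$ is a property of the \emph{marginal family} $(\mu^S:~S\subseteq V)$ of $\mu$, which is trivially decreasing and hence, by Proposition~\ref{PROP:DISINT}, carries disintegrations $R_{S,T}=(\rho_{S,T,x}:~x\in J^S)$ with $\mu^S[R_{S,T}]=\mu^T$, unique up to a $\mu^S$-null set of $x$. The hypothesis, on the other hand, hands us the disintegrations $N_{S,T}=(\nu_{S,T,x})$ of the Markovian family, satisfying $\mu_S[N_{S,T}]=\mu_T$ (equation~\eqref{EQ:NST-DEF}) together with the product identity~\eqref{EQ:NU-INDEP} for \emph{all} compatible pairs $U,W$, not merely those with $U\cup W=V$. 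So the first task is to express each $\rho_{S,T,x}$, for $S\subseteq T\subseteq V$, in terms of $\nu_{S,V,x}$.

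First I would record, from~\eqref{EQ:RN-MT} applied with $T=V$, that $d\mu^S/d\mu_S(x)=\nu_{S,V,x}(J^{V\setminus S})$, so this quantity is strictly positive for $\mu^S$-almost all $x$. Marginalizing the identity $\mu_V=\mu_S[N_{S,V}]$ onto a set $T$ with $S\subseteq T\subseteq V$ shows $\mu^T=\mu_S[\Phi]$ with $\phi_x=(\nu_{S,V,x})^{T\setminus S}$; dividing by the Radon--Nikodym derivative above then exhibits a concrete version of $R_{S,T}$, namely $\rho_{S,T,x}=(\nu_{S,V,x})^{T\setminus S}/\nu_{S,V,x}(J^{V\setminus S})$ where the denominator is positive (and arbitrary on the remaining $\mu^S$-null set). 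This is the routine but central bookkeeping step, and it is exactly where the discrepancy between $\mu_S$ and $\mu^S$ --- which need only satisfy $\mu^S\ll\mu_S$, not equality --- gets absorbed into a normalizing constant.

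Next, for $U,W$ with $U\cup W=V$, $S=U\cap W$, and no edge between $U\setminus S$ and $W\setminus S$, I would invoke~\eqref{EQ:NU-INDEP} in the case $U\cup W=V$ to obtain $\nu_{S,V,x}=\nu_{S,U,x}\times\nu_{S,W,x}$ for $\mu_S$-almost all $x$ (here $V\setminus S=(U\setminus S)\sqcup(W\setminus S)$, so this genuinely is a product measure). Taking marginals, $(\nu_{S,V,x})^{U\setminus S}$ is a scalar multiple of $\nu_{S,U,x}$ and likewise for $W$; substituting into the formula of the previous paragraph, the scalars cancel and one reads off $\rho_{S,V,x}=\rho_{S,U,x}\times\rho_{S,W,x}$ for $\mu_S$-almost all, hence $\mu^S$-almost all, $x$. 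Since disintegrations are determined $\mu^S$-almost everywhere, the identity then holds for \emph{every} admissible choice of $R_{S,U},R_{S,W},R_{S,V}$, which is precisely~\eqref{EQ:RHO-INDEP}, the defining property of a Markov random field on $G$.

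The main obstacle is conceptual rather than technical: one must resist conflating the Markovian family $(\mu_S)$ with the marginal family $(\mu^S)$ of $\mu_V$. Everything hinges on the identity $d\mu^S/d\mu_S=\nu_{S,V,\cdot}(J^{V\setminus S})$ and on the observation that marginalizing the product $\nu_{S,U,x}\times\nu_{S,W,x}$ merely rescales its factors, so all the normalizations introduced along the way cancel in the final product identity. There is no delicate estimate anywhere; the only care required is to keep track of which reference measure ($\mu_S$ or $\mu^S$) each ``almost everywhere'' refers to, using $\mu^S\ll\mu_S$ to pass from the former to the latter.
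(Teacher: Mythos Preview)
Your proof is correct and follows essentially the same strategy as the paper: both express the marginal-family disintegrations $\rho_{S,T,x}$ in terms of the Markovian-family disintegrations $\nu$, using $d\mu^S/d\mu_S(x)=\nu_{S,V,x}(J^{V\setminus S})$ as the conversion factor, and then read off the product identity from~\eqref{EQ:NU-INDEP}. Your formula $\rho_{S,T,x}=(\nu_{S,V,x})^{T\setminus S}/\nu_{S,V,x}(J^{V\setminus S})$ is a slightly cleaner repackaging of the paper's integral expression~\eqref{EQ:RHO-NU0} (they coincide via the chain rule~\eqref{EQ:CHAIN}); by taking marginals of the product measure directly you avoid the detour through Lemma~\ref{LEM:MARKOV-YDEP}, but the underlying idea is the same.
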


\begin{proof}
Recall that $\nu_{S,T,x}$ is the disintegration of $\mu_T$ with respect to
$\mu_S$, and $\Rho_{S,T,x}$ is the disintegration of $\mu^T$ with respect to
$\mu^S$. Our first step is to express $\pi_J$ in terms of $\nu$. Let $S\subseteq
T\subseteq V$. We claim that for all $B\in\BB^{T\setminus S}$ and
$\mu^S$-almost all $x\in J^S$,
\begin{equation}\label{EQ:RHO-NU0}
\Rho_{S,T,x}(B) = \intl_B  \frac{\nu_{T,V,xy}(J^{V\setminus T})}{\nu_{S,V,x}(J^{V\setminus S})} \,d\nu_{S,T,x}(y).
\end{equation}
First note that by \eqref{EQ:RN-MT}, we have
\[
\mu^S\left(\left\{x:~\nu_ {S,V,x}(J^{V\setminus S})=0\right\}\right)=
\int_{\left\{x:\,\nu_ {S,V,x}(J^{V\setminus S})=0\right\}}\nu_{S,V,x}(J^{V\setminus S}) \,d\mu_S(x)=0,
\]
hence the right hand side is well-defined for $\mu^S$-almost all $x\in J^S$. To
prove \eqref{EQ:RHO-NU0}, we integrate both sides on $A\in\BB^S$ with respect
to $\mu^S$. The left hand side turns into
\[
\intl_A\Rho_{S,T,x}(B)\,d\mu^S(x) = \mu^T(A\times B),
\]
whereas, using \eqref{EQ:RN-MT}, the right hand side becomes
\begin{align*}
\intl_A \intl_B &\frac{\nu_{T,V,xy}(J^{V\setminus T})}{\nu_{S,V,x}(J^{V\setminus S})}\,d\nu_{S,T,x}(y)\,d\mu^S(x)\\
&= \intl_{A} \intl_{B} \nu_{T,V,xy}(J^{V\setminus T})\,d\nu_{S,T,x}(y)\,d\mu_S(x)\\
&= \intl_{A \times B} \nu_{T,V,xy}(J^{V\setminus T}) \, d\mu_T(xy)
= \intl_{A\times B} 1\, d\mu^T(xy) = \mu^T(A\times B).
\end{align*}
This proves \eqref{EQ:RHO-NU0}. Hence for $B\in \BB^{U\setminus S}$ and
$C\in\BB^{W\setminus S}$,
\[
\Rho_{S,V,x}(B\times C)=\intl_{B\times C}  \frac{1}{\nu_{S,V,x}(J^{V\setminus S})} \,d\nu_{S,V,x}(y)
=\frac{1}{\nu_{S,V,x}(J^{V\setminus S})}\nu_{S,V,x}(B\times C).
\]
Using Lemma \ref{LEM:MARKOV-YDEP},
\begin{align*}
\Rho_{S,U,x}(B)&=\intl_B\frac{\nu_{U,V,xy}(J^{V\setminus U})}{\nu_{S,V,x}(J^{V\setminus S})} \,d\nu_{S,U,x}(y)
=\intl_B\frac{\nu_{S,W,x}(J^{V\setminus U})}{\nu_{S,V,x}(J^{V\setminus S})} \,d\nu_{S,U,x}(y)\\
&=\frac{\nu_{S,W,x}(J^{W\setminus S})}{\nu_{S,V,x}(J^{V\setminus S})} \nu_{S,U,x}(B).
\end{align*}
Using a similar expression for $\Rho_{x,W}(C)$, we get
\begin{align*}
(\Rho_{S,U,x}&\times\Rho_{S,W,x})(B\times C)
=\frac{\nu_{S,W,x}(J^{W\setminus S})\nu_{S,U,x}(J^{U\setminus S})}{\nu_{S,V,x}(J^{V\setminus S})^2}
\nu_{S,U,x}(B) \nu_{S,W,x}(C)\\
&=\frac{1}{\nu_{S,V,x}(J^{V\setminus S})} \nu_{S,V,x}(B\times C)=\Rho_{S,V,x}(B\times C).
\end{align*}
This proves Proposition \ref{PROP:MARKOV-MARKOV}.
\end{proof}

\subsection{Partition sequences}\label{APP:C}

We prove the following basic facts about exhausting partition sequences.

\begin{lemma}\label{LEM:EXHAUST}
Let $(J,\BB,\pi)$ be a standard Borel probability space, and let
$(\PP_i)_{i=1}^\infty$ be a partition sequence, with
$\RR=\bigcup_{i=1}^\infty\PP_i$. Then the following are equivalent:

\smallskip

{\rm(i)} $\RR$ is exhausting with respect to $\pi$, i.e., for every $X\in\BB$
there is a set $Y\in\overline{\RR}$ such that $\pi(X\triangle Y) =0$.

\smallskip

{\rm(ii)} For every $X\in\BB$ and every $\eps>0$ there is a set $Y\in\wh\RR$
such that $\pi(X\triangle Y) <\eps$.

\smallskip

{\rm(iii)} There is a generating partition sequence $(\QQ_i)_{i=1}^\infty$ and
a Borel set $U$ with $\pi(U)=0$ such that $\PP_i|_{J\setminus U} =
\QQ_i|_{J\setminus U}$ for all $i$.

\smallskip

{\rm(iv)} $\bigcup_{i=1}^\infty L^1(J,\overline{\PP}_i,\pi)$ is dense in
$L^1(J,\BB,\pi)$.
\end{lemma}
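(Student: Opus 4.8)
The plan is to establish $\text{(i)}\Leftrightarrow\text{(ii)}$, $\text{(ii)}\Leftrightarrow\text{(iv)}$ and $\text{(i)}\Leftrightarrow\text{(iii)}$ separately. Write $d(A,B)=\pi(A\triangle B)$; recall that $\wh\RR=\bigcup_i\wh\PP_i$ is the set algebra of finite unions of members of $\RR$, that it generates $\overline{\RR}$, and that $\bigcup_{i=1}^\infty L^1(J,\overline{\PP}_i,\pi)$ is exactly the linear span of $\{\one_A:A\in\wh\RR\}$, since a $\overline{\PP}_i$-measurable integrable function is a $\PP_i$-step function ($\PP_i$ being finite). For $\text{(i)}\Rightarrow\text{(ii)}$ I would invoke the standard fact (a one-line monotone class argument) that an algebra is dense, in the pseudometric $d$, in the $\sigma$-algebra it generates: given $X$ and $\eps$, (i) yields $Y\in\overline{\RR}$ with $d(X,Y)=0$, then $Z\in\wh\RR$ with $d(Y,Z)<\eps$, so $d(X,Z)<\eps$. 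For $\text{(ii)}\Rightarrow\text{(i)}$, pick $Z_n\in\wh\RR\subseteq\overline{\RR}$ with $d(X,Z_n)<2^{-n}$; then $Y:=\liminf_n Z_n\in\overline{\RR}$ and $X\triangle Y\subseteq\limsup_n(X\triangle Z_n)$ is $\pi$-null by Borel--Cantelli, so $d(X,Y)=0$.

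Next, $\text{(ii)}\Leftrightarrow\text{(iv)}$ is a rounding argument. If (ii) holds, every indicator $\one_X$ ($X\in\BB$) is an $L^1$-limit of indicators $\one_Z$, $Z\in\wh\RR$, hence every simple function is an $L^1$-limit of elements of $\bigcup_i L^1(J,\overline{\PP}_i,\pi)$; since simple functions are dense in $L^1(J,\BB,\pi)$, (iv) follows. Conversely, given $X$ and a step function $g=\sum_j c_j\one_{P_j}$ (the $P_j$ being the classes of some $\PP_i$) with $\|\one_X-g\|_1$ small, the set $Y:=\{g>1/2\}\in\wh\PP_i$ satisfies the pointwise bound $|\one_X-\one_Y|\le 2|\one_X-g|$, whence $d(X,Y)=\|\one_X-\one_Y\|_1\le 2\|\one_X-g\|_1$, giving (ii).

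For $\text{(iii)}\Rightarrow\text{(i)}$: given a generating sequence $(\QQ_i)$ and a $\pi$-null Borel set $U$ with $\PP_i|_{J\setminus U}=\QQ_i|_{J\setminus U}$, consider $\mathcal G:=\{S\in\BB:S\triangle Y\subseteq U\text{ for some }Y\in\overline{\RR}\}$. This is a $\sigma$-algebra (closure under complement and countable union is immediate), and it contains every class $Q$ of every $\QQ_i$: take $Y=\emptyset$ if $Q\cap(J\setminus U)=\emptyset$, and otherwise let $Y$ be the matching $\PP_i$-class, the one with the same trace on $J\setminus U$, so that $Q\triangle Y\subseteq U$. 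Since the $\QQ_i$ generate $\BB$ we get $\mathcal G=\BB$, hence every $X\in\BB$ admits $Y\in\overline{\RR}$ with $\pi(X\triangle Y)=0$, which is (i).

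The substantial implication is $\text{(i)}\Rightarrow\text{(iii)}$, and its core is a localization step. Using that $(J,\BB)$ is standard Borel, fix a countable family $\{B_n\}$ generating $\BB$; by (i) choose $Y_n\in\overline{\RR}$ with $\pi(B_n\triangle Y_n)=0$, and put $U:=\bigcup_n(B_n\triangle Y_n)$, a $\pi$-null Borel set. Since $B_n\cap(J\setminus U)=Y_n\cap(J\setminus U)$ for all $n$, the traces agree: $\BB|_{J\setminus U}=\overline{\RR}|_{J\setminus U}=\sigma(\{P\cap(J\setminus U):P\in\RR\})$. Now $U$ is itself a standard Borel space, so $\BB|_U$ is generated by an increasing sequence $\mathcal E_1\preceq\mathcal E_2\preceq\cdots$ of finite Borel partitions of $U$ (measures play no role here as $\pi(U)=0$); define $\QQ_i$ by splitting each class $P\in\PP_i$ into $P\setminus U$ and the sets $P\cap E$ ($E\in\mathcal E_i$). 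Then the $\QQ_i$ form an increasing sequence of finite partitions with $\QQ_i|_{J\setminus U}=\PP_i|_{J\setminus U}$, and $\overline{\bigcup_i\QQ_i}$ contains $U$ and $J\setminus U$ (finite unions of classes of $\QQ_1$), all of $\BB|_{J\setminus U}$ (from the traces of the $\PP_i$), and all of $\BB|_U$ (from the $\mathcal E_i$); since every Borel set is the union of its traces on $U$ and on $J\setminus U$, this forces $\overline{\bigcup_i\QQ_i}=\BB$, so $(\QQ_i)$ is generating and (iii) holds. I expect the localization — confining the discrepancy between $\overline{\RR}$ and $\BB$ to one $\pi$-null Borel set via countable generation plus (i), after which one refines freely on that set — to be the only genuinely non-routine point, and the only place where standard Borelness is used essentially.
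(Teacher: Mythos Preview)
Your proof is correct and follows essentially the same route as the paper's: the same $\liminf$/Borel--Cantelli argument for (ii)$\Rightarrow$(i), the same localization of the discrepancy to a countable union $U=\bigcup_n(B_n\triangle Y_n)$ for (i)$\Rightarrow$(iii), and the same rounding idea for (iv)$\Rightarrow$(ii). The only cosmetic differences are that the paper proves the algebra-approximation fact you cite in (i)$\Rightarrow$(ii) from scratch, and in (i)$\Rightarrow$(iii) it defines $\QQ_i=\PP_i|_{J\setminus U}\cup\QQ'_i$ directly rather than intersecting the $U$-partition classes with those of $\PP_i$ as you do.
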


\begin{proof}
(i)$\Rightarrow$(ii): Let $\SS$ be the family of sets $X\in\BB$ for which for
every $\eps>0$ there is a set $Y\in\wh\RR$ such that $\pi(X\triangle Y)<\eps$.
Then $\SS$ is closed under complementation (trivially), and under finite union
and finite intersection (almost trivially). It follows that it is closed under
countable union. Indeed, let $\eps>0$, $X=X_1\cup X_2\cup\cdots$, where $X_i\in
\SS$, and $X_i'=X_i\setminus(X_1\cup\dots\cup X_{i-1})$. Since the $X_i'$ are
disjoint, we have $\sum_{i=N+1}^\infty \pi(X_i')<\eps/2$ for an appropriate
$N$. Since $X_i'\in\SS$, there are $Y_i\in\wh\RR$ such that $\pi(X_i'\triangle
Y_i)<\eps/(2N)$. Let $Y=\bigcup_{i=1}^N Y_i\in\wh\RR$, then
\[
\pi(X\triangle Y) \le \sum_{i=1}^N \pi(X_i'\triangle Y_i) + \sum_{i=N+1}^\infty \pi(X_i')<\eps.
\]
So $\SS$ is a sigma-algebra. Trivially $\RR\subseteq \SS$, so
$\overline{\RR}\subseteq\SS$. By (i), for every $S\in\BB$ there is a set
$Z\in\overline{\RR}$ such that $\pi(Z\triangle S)=0$, and then $Z\in\SS$
implies that there is a set $Y\in\wh\RR$ for which $\pi(Z\triangle Y)<\eps$.
Then $\pi(Y\triangle X)<\eps$.

\medskip

(ii)$\Rightarrow$(i): Let $X\in\BB$, and for $k\ge 1$, let $Y_k\in\wh\RR$ be a
set such that $\pi(X\triangle Y_k)<2^{-k}$. Consider the sets
\[
Z_n=\bigcap_{k=n}^\infty Y_k,\quad\text{and}\quad Z=\bigcup_{n=1}^\infty Z_n.
\]
Trivially $Z\in\overline{\RR}$. Furthermore,
\[
\pi(Z_n\setminus X)\le \lim\inf_k \pi(Y_k\setminus X)\le \lim\inf_k \pi(Y_k\triangle X)=0,
\]
and
\[
\pi(X\setminus Z_n)\le \sum_{k=n}^\infty \pi(X\setminus Y_k)\le \sum_{k=n}^\infty \pi(X\triangle Y_k)<2^{1-n}.
\]
Using this, a similar computation gives that $\pi(X\triangle Z)=0$.

\medskip

(i)$\Rightarrow$(iii): Let $B_1,B_2,\dots$ be a countable generating set of
$\BB$. For each $i$, there is a set $C_i\in\overline{\RR}$ such that
$\pi(B_i\triangle C_i)=0$. Let $U=\bigcup_{i=1}^\infty B_i\triangle C_i$, then
$\pi(U)=0$. Let $(\QQ'_i)_{i=1}^\infty$ be a generating partition sequence of
Borel subsets of $U$, and let $\QQ_i=\PP_i|_{J\setminus U}\cup \QQ'_i$. Then
$(\QQ_i)_{i=1}^\infty$ is a generating partition sequence in $(J,\BB)$ such
that $\PP_i|_{J\setminus U} = \QQ_i|_{J\setminus U}$ for all $i$.

\medskip

(iii)$\Rightarrow$(i): Let $(\QQ_i)_{i=1}^\infty$ be a generating sequence of
partitions and $U$, a Borel set with $\pi(U)=0$ such that
$\PP_i|_{J\setminus U} = \QQ_i|_{J\setminus U}$ for all $i$. Then
$\PP_i|_{J\setminus U}$ is a generating partition sequence for the Borel sets
in $J\setminus U$, and hence for every $C\in\AA$ there is a
$D\in\overline{\RR}|_{J\setminus U}$ for which $C\setminus U = D$. Then
$D=D_1\setminus U$ for some $D_1\in \overline{\RR}$, and $\pi(C\triangle
D_1)\le \pi(U)=0$.

\medskip

\{(i),(ii),(iii)\}$\Rightarrow$(iv): By (iii), we may assume that $\RR$ is
generating. It suffices to prove that every function $\one_S$ ($S\in\BB$) can
be approximated arbitrarily well by finite linear combinations of functions
$\one_A$ $(A\in \PP_i)$, since the functions $\one_S$ are dense in
$L^1(J,\BB,\pi)$, and $\one_A\in L^1(J,\PP_i,\pi)$. This follows by (ii).

\medskip

(iv)$\Rightarrow$(ii): For every $S\in\BB$ and $\eps>0$ there are sets
$A_1,\dots,A_k\in\RR$ and nonzero real numbers $\alpha_1,\dots,\alpha_k$ such
that
\[
\big\|\one_S-\alpha_1\one_{A_1}-\dots-\alpha_k\one_{A_k}\big\|_1 <\eps.
\]
Let $i$ be the least integer for which $A_1,\dots,A_k\in\wh{\PP}_i$. By
splitting an $A_j$ into partition classes in $\PP_i$ (and adjusting the
coefficients as necessary), we may assume that every $A_j\in P_i$. Then the
$A_j$ are disjoint. Replacing $\alpha_j$ by $1$ if $\pi(A_j\cap S)\ge
\pi(A_j)/2$, and by $0$ otherwise, we decrease the left hand side. Deleting
zero terms, we may assume that every $\alpha_j=1$, and then $Y=\bigcup_j A_j$
satisfies $\pi(S\triangle Y)<\eps$.
\end{proof}

\begin{lemma}\label{appendix2}
Let $(J,\BB,\pi)$ be a Borel probability space and assume that $\mu$ is a
measure on $J^k$ for some $k\in\mathbb{N}$ such that its marginal distribution
in each coordinate is $\pi$. Let $(\PP_i)_{i=1}^\infty$ be an exhausting
partition sequence with respect to $\pi$. Then the partition sequence
$(\PP^k_i)_{i=1}^\infty$ is exhausting to both $\pi^k$ and $\mu$.
\end{lemma}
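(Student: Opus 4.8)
The plan is to reduce everything to the equivalences of Lemma~\ref{LEM:EXHAUST} by means of one elementary observation about measures all of whose one-dimensional marginals equal $\pi$. First note that since the first coordinate marginal of $\mu$ is $\pi$, we have $\mu(J^k)=\pi(J)=1$, so $\mu$ is a probability measure; and both $\pi^k$ and $\mu$ are probability measures on the standard Borel space $(J^k,\BB^k)$ whose $j$-th coordinate marginal equals $\pi$ for every $j$. It therefore suffices to prove the following single statement and then apply it with $\nu=\pi^k$ and with $\nu=\mu$: if $\nu$ is any probability measure on $(J^k,\BB^k)$ with $j$-th marginal $\pi$ for every $j$, then the family $\RR^k:=\bigcup_i\PP_i^k$ is exhausting with respect to $\nu$ (in the sense of Definition~\ref{DEF:EXHAUST-FAM}).

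To prove this, I would first replace $(\PP_i)$ by a generating sequence. Since $(\PP_i)_{i=1}^\infty$ is exhausting with respect to $\pi$, the implication (i)$\Rightarrow$(iii) of Lemma~\ref{LEM:EXHAUST} gives a generating partition sequence $(\QQ_i)_{i=1}^\infty$ of $(J,\BB)$ and a Borel set $U$ with $\pi(U)=0$ such that $\PP_i|_{J\setminus U}=\QQ_i|_{J\setminus U}$ for all $i$. The product sequence $(\QQ_i^k)_{i=1}^\infty$ is a refining sequence of finite measurable partitions of $J^k$, and $\bigcup_i\QQ_i^k$ generates $\BB^k$: the classes of $\QQ_i$ generate $\BB$, hence boxes of such classes generate $\BB^k$, and any box $Q_{1}\times\cdots\times Q_{k}$ with $Q_j\in\QQ_{i_j}$ is a finite union of classes of $\QQ_i^k$ for $i=\max_j i_j$ (as $\QQ_i$ refines each $\QQ_{i_j}$). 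Now set $U^{(k)}:=\{x\in J^k:\ x_j\in U\text{ for some }j\}=J^k\setminus(J\setminus U)^k$. The one observation that carries the whole argument is that
\[
\nu(U^{(k)})\le\sum_{j=1}^k\nu(\{x:\ x_j\in U\})=\sum_{j=1}^k\pi(U)=0,
\]
using precisely that the $j$-th marginal of $\nu$ is $\pi$; in particular $\pi^k(U^{(k)})=\mu(U^{(k)})=0$. Moreover $\PP_i^k$ and $\QQ_i^k$ agree off $U^{(k)}$: intersecting a class $P_{j_1}\times\cdots\times P_{j_k}$ of $\PP_i^k$ with $(J\setminus U)^k$ gives $(P_{j_1}\setminus U)\times\cdots\times(P_{j_k}\setminus U)$, and replacing each $P_{j_\ell}\setminus U$ by the matching $Q_{j_\ell}\setminus U$ shows this equals the corresponding restriction of a class of $\QQ_i^k$. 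Thus we are exactly in the situation of hypothesis (iii) of Lemma~\ref{LEM:EXHAUST} on the standard Borel probability space $(J^k,\BB^k,\nu)$ — with generating sequence $(\QQ_i^k)$ and $\nu$-null set $U^{(k)}$ — so by (iii)$\Rightarrow$(i) the family $\bigcup_i\PP_i^k$ is exhausting with respect to $\nu$. Applying this to $\nu=\pi^k$ and $\nu=\mu$ completes the proof; note that $(\PP_i^k)$ is anyway a genuine partition sequence of $(J^k,\BB^k,\pi^k)$, so calling it ``the partition sequence $(\PP_i^k)$'' is justified.

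I do not expect a real obstacle here; the only point deserving a word of care is the mild mismatch between ``exhausting'' (a property of the set family, purely measure-theoretic) and the nondegeneracy built into the notion of a ``partition sequence'': the classes of $\QQ_i^k$ automatically have positive $\pi^k$-measure but may be $\mu$-null. This is harmless, because the proof of (iii)$\Rightarrow$(i) in Lemma~\ref{LEM:EXHAUST} uses only that $\bigcup_i\QQ_i^k$ generates $\BB^k$, that the relevant measure vanishes on $U^{(k)}$, and that the two partition sequences coincide off $U^{(k)}$ — nondegeneracy of the classes plays no role. Should one prefer to avoid invoking (iii) altogether, one can argue directly: for a box $B_1\times\cdots\times B_k$ with $B_j\in\BB$ approximate each $B_j$ within $\eps/k$ in $\pi$ by a union of classes of a common $\PP_i$ (characterization (ii) of Lemma~\ref{LEM:EXHAUST} for $(\PP_i,\pi)$), take the product of these unions, and bound the $\nu$-measure of the symmetric difference coordinate by coordinate via the marginal condition; then observe, exactly as in the proof of Lemma~\ref{LEM:EXHAUST}, that the class of $\nu$-approximable sets is a $\sigma$-algebra, hence equals $\BB^k$, which gives exhaustion.
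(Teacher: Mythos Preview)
Your proposal is correct and follows essentially the same route as the paper: pass from exhausting to generating via Lemma~\ref{LEM:EXHAUST}(iii), observe that products of generating partitions generate $\BB^k$, and note that the exceptional set $U^{(k)}$ is null for any measure whose coordinate marginals equal $\pi$. The paper's own proof is a two-sentence sketch (``Replacing `exhausting' by `generating', the assertion is easy. For exhausting partition sequences, it follows by Lemma~\ref{LEM:EXHAUST}(iii).''); you have simply spelled out the details it leaves to the reader, including the key marginal bound $\nu(U^{(k)})\le\sum_j\pi(U)=0$ and the minor point about nondegeneracy.
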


\begin{proof}
Replacing ``exhausting'' by ``generating'', the assertion is easy. For
exhausting partition sequences, it follows by Lemma \ref{LEM:EXHAUST}(iii).
\end{proof}

\subsection{Unbounded graphons and non-acyclic graphs}\label{APP:D}

We give the details of the arguments for Example \ref{EXA:GRAPHON}. Recall that
$f:~I=[-1,1]\to\mathbb{R}$ has the following properties: $f\ge 0$; $f(-x)=f(x)$ for all
$x\in I$; $\int_I f(x)\,dx=1$; $f$ is convex and monotone decreasing for $x>0$.
This function defines a graphon by
\[
W(x,y)=f(x-y)\qquad(x,\in I),
\]
where $f$ is extended periodically modulo $2$. Clearly $W$ is symmetric and
$1$-regular. The stationary measure $\mu$ of the graphon is $\mu=\lambda/2$. We
claim that as a kernel operator, it is positive semidefinite and compact as
$L^2(\mu)\to L^2(\mu)$.

The eigenfunctions of $W$ are $\sin(k\pi x)$ and $\cos(k\pi x)$, and hence the
eigenvalues can be obtained as the Fourier coefficients of $f(x)$. By the
symmetry of $f$, eigenvalues associated with the eigenfunction $\sin(k\pi x)$
are zero. The other eigenvalues can be expressed for even $k\ge 0$ as
\begin{align}\label{EQ:LK0}
\lambda_k &= \intl_{-1}^1 f(x) \cos(k\pi x)\,dx  =
2\intl_{0}^1 f(x) \cos(k\pi x)\,dx\nonumber\\
&= \frac2{k}\intl_{0}^{k} f\Big(\frac{y}{k}\Big) \cos(\pi y)\,dy
= \frac2{k}\sum_{j=0}^{k/2-1} \intl_{0}^{2}  f\Big(\frac{y+2j}{k}\Big)  \cos(\pi y)\,dy.
\end{align}
To see that this is nonnegative, notice that $\cos(\pi y)=-\cos(\pi(1-y)) =
-\cos(\pi(1+y))=\cos(\pi(2-y))$, and so we can write \eqref{EQ:LK0} as
\begin{align}\label{EQ:LK}
\lambda_k = \frac2{k}\sum_{j=0}^{k/2-1} \intl_{0}^{1/2}  &\Big[f\Big(\frac{2j+y}{k}\Big)
- f\Big(\frac{2j+1-y}{k}\Big) - f\Big(\frac{2j+1+y}{k}\Big)\nonumber\\
& + f\Big(\frac{2j+2-y}{k}\Big) \Big] \cos(\pi y)\,dy.
\end{align}
Here each integrand is nonnegative by the convexity of $f$. For odd $k$, we get
an extra term
\[
\frac2{k}\intl_{k-1}^{k} f\Big(\frac{y}{k}\Big) \cos(\pi y)\,dy
\ge\frac2{k}\intl_{k-1}^{k} f\Big(\frac{y}{k}\Big) \,dy \intl_{k-1}^k \cos(\pi y)\,dy = 0,
\]
where we used Chebyshev's sum inequality on the monotone decreasing functions $f(y/k)$ and $\cos(\pi y)$.
This proves that $W$ is positive semidefinite.

By the Riemann--Lebesgue Lemma, $\lambda_k\to 0$. This implies that $W$ defines
a compact operator $L^2(\mu)\to L^2(\mu)$.

As a useful special case, we consider the function defined by
\[
f(x)=\frac1{x(2-\ln(x))^2}= \left(\frac1{2-\ln(x)}\right)'
\]
for $x> 0$, and $f(x)=f(-x)$ for $x<0$. (For $x=0$ we can define $f(x)=0$.) We
have
\[
\intl_{-1}^1 f(x)\,dx = 2\left[\frac1{2-\ln(x)}\right]_0^1 = 1.
\]
The conditions that $f$ is monotone decreasing and convex for $x>0$ are easy to
check. To determine the order of magnitude of $\lambda_k$, note that the first
term in \eqref{EQ:LK} is
\begin{align*}
a_k&=\frac1{k} \intl_{0}^{1/2} \left(f\Big(\frac{y}{k}\Big) - f\Big(\frac{1-y}{k}\Big)
- f\Big(\frac{1+y}{k}\Big) + f\Big(\frac{2-y}{k}\Big)\right) \cos(\pi y)\,dy\\
&\ge \frac1{k} \intl_{0}^{1/4} \left(f\Big(\frac{y}{k}\Big) - f\Big(\frac{1-y}{k}\Big)
- f\Big(\frac{1+y}{k}\Big) + f\Big(\frac{2-y}{k}\Big)\right) \cos(\pi y)\,dy
\end{align*}
Using the inequality $f(3x)\le f(x)/2$ and $f(5x)\le f(x)/4$ valid for
$x<1/(4k)$ if $k$ is large enough, we can estimate the expression in the large
parenthesis as
\begin{align*}
f\Big(\frac{y}{k}\Big) &- f\Big(\frac{1-y}{k}\Big)
- f\Big(\frac{1+y}{k}\Big) + f\Big(\frac{2-y}{k}\Big) \\
&\ge \Big(1-\frac12-\frac14\Big)f\Big(\frac{y}{k}\Big)>\frac14 f\Big(\frac{y}{k}\Big).
\end{align*}
Hence
\[
\lambda_k\ge a_k\ge  \frac1{4k}\intl_{0}^{1/4} f\Big(\frac{y}{k}\Big)\cos\frac\pi4\,dy
=  \frac1{4\sqrt{2}} \intl_{0}^{1/(4k)} f(x)\,dx = \frac1{4\sqrt{2}(2+\ln(4k))}.
\]
It follows that no operator power of $W$ has finite trace, so $t(C_n,W)=\infty$
for all $n$. Since $f$ is bounded away from $0$, it follows that
$t(G,W)=\infty$ for every graph $G$ containing a cycle.

\end{document}